\documentclass{amsart}
\usepackage[english]{babel} 
\selectlanguage{english}             
\usepackage[utf8]{inputenc}
\usepackage[T1]{fontenc}

\usepackage{amsmath,amsfonts,amssymb,amsthm}
\usepackage{bbm,mathrsfs,MnSymbol}
\usepackage[longnamesfirst,numbers]{natbib}
\usepackage[margin]{fixme}
\usepackage{setspace}
\usepackage{enumitem}         
\usepackage{lscape}
\usepackage{stmaryrd} 
\usepackage{todonotes}
\usepackage{color}
\definecolor{darkgreen}{rgb}{0,0.55,0}
\usepackage{hyperref}
\hypersetup{pdfstartview={Fit},    
  pdftitle={European Americans},    
  pdfauthor={Martin Larsson, Marvin S. Mueller, Josef Teichmann},     
  pdfsubject={},   
  pagebackref=true,
  colorlinks=true,
  linkcolor=blue,
  citecolor=darkgreen}


\vfuzz2pt 
\hfuzz2pt 

\newtheorem{thm}{Theorem}[section]

\theoremstyle{definition}

\theoremstyle{remark}

\newtheorem{rmk}[thm]{Remark}

\numberwithin{equation}{section}

\newtheorem{theorem}[thm]{Theorem}
\newtheorem{proposition}[thm]{Proposition}
\newtheorem{corollary}[thm]{Corollary}
\newtheorem{lemma}[thm]{Lemma}

\theoremstyle{definition}
\newtheorem{definition}[thm]{Definition}

\theoremstyle{remark}
\newtheorem{remark}[thm]{Remark}


\newcommand\1{\mathbf{1}}

\newcommand\R{\mathbb{R}}

\newcommand\N{\mathbb{N}}

\DeclareMathOperator*{\argmin}{arg\,min} 
\DeclareMathOperator*{\argmax}{arg\,max} 
\DeclareMathOperator{\graph}{graph}


\providecommand{\abs}[1]{\left\lvert#1\right\rvert}
\providecommand{\norm}[2]{\left\lVert#1\right\rVert_{#2}}


\newcommand{\borel}[1]{\mathfrak B\left(#1\right)}


\newcommand{\QQ}{\mathbb{Q}}
\newcommand{\PP}{\mathbb{P}}

\newcommand{\EE}{\mathbb{E}}
\newcommand{\FF}{\mathbb{F}}


\newcommand{\cE}{\mathcal{E}}

\newcommand{\cH}{\mathcal{H}}

\newcommand{\cO}{\mathcal{O}}
\newcommand{\cP}{\mathcal{P}}

\newcommand{\cS}{\mathcal{S}}
\newcommand{\cL}{\mathcal{L}}

\newcommand{\cT}{\mathcal{T}}

\newcommand{\fP}{\mathfrak{P}}


\newcommand\ddt{\tfrac{\partial}{\partial t}}

\newcommand\ddx{\tfrac{\partial}{\partial x}}
\newcommand\ddxx{\tfrac{\partial^2}{\partial x^2}}


\newcommand\F{\mathcal{F}}

\renewcommand\d{\,\operatorname{d}\hspace{-0.05cm}}





\newcommand\dom{\mathcal{D}}                                      


\newcommand{\bsigma}{{\overline \sigma}}

\newcommand{\usigma}{\underline{\sigma}}

\title[On nonlinear optimal stopping]{Stopper-Controller Games embedded in Single-Player Control Problems}
\author{Martin Larsson, Marvin S. M\"uller, Josef Teichmann}
\address{Department of Mathematical Sciences, Carnegie Mellon University, Pittsburgh 15213, USA, Department of Mathematics, ETH Z\"urich, Switzerland}
\email{martinl@andrew.cmu.edu, marvin.s.mueller@gmail.com, jteichma@math.ethz.ch}

\thanks{The authors acknowledge support by the Swiss National Science Foundation through grant SNF $205121\_163425$.}
\subjclass[2010]{93E20, 91G20, 91A15, 49L20}

\keywords{American Options, non linear optimal stopping, viscosity solutions}

\date{\today}

\begin{document}
\begin{abstract}
  In 2002, Benjamin Jourdain and Claude Martini discovered that for a
  class of payoff functions, the pricing problem for American options
  can be reduced to pricing of European options for an appropriately
  associated payoff, all within a Black-Scholes framework. This discovery
  has been investigated in great detail by S\"oren Christensen, Jan
  Kallsen and Matthias Lenga in a recent work in 2020. In the present work we
  prove that this phenomenon can be observed in a wider context, and
  even holds true in a setup of non-linear stochastic processes. We
  analyse this problem from both probabilistic and analytic viewpoints.
  In the classical situation, Jourdain and Martini used this
  method to approximate prices of American put options. The broader
  applicability now potentially covers non-linear frameworks such as
  model uncertainty and controller-and-stopper-games.
\end{abstract}

\maketitle
\allowdisplaybreaks

\tableofcontents
\section{Introduction}
\label{sec:intro}

In the seminal work ``Approximation of American put prices by European
prices via an embedding method'', see \cite{jourdain2001american} and \cite{jourdain2002approximation}, Benjamin Jourdain and Claude Martini
discovered the following phenomenon: given a European option with
payoff $g$, then one can under a specific regularity condition associate an American option with
$ f \leq g $ such that the price functionals of the American and the
European option problem coincide. More precisely, let $ X $ denote a strong Markov process with continuous trajectories on some state space $E$ together with time horizon $ T > 0$,
and assume that there is a continuous function $t_*: E \to [0,T] $ such that
\[
f(x) := \inf_{0 \leq t \leq T} E_{t,x}[g(X_T)] = E_{t_*(x), x}[g(X_T)]
\]
holds true for $ x \in E $. Given $ x \in E $ and some time $ 0 \leq t \leq T $, and assume $ t_*(x) \geq t $. Let $ \tau $ be the first time when
\[
t_*(X_{\tau}) = \tau \, ,
\]
which exists due to continuity, then $ \tau $ is the optimal stopping time of the optimal stopping problem associated to $f$. Indeed for every stopping time $ t \leq \eta \leq T $
\begin{align*}
  E_{t,x}[g(X_T)] & = E_{t,x}[E_{\eta,X_{\eta}}[g(X_{T})]] \geq \sup_{t \leq \eta \leq T} E_{t,x}[f(X_\eta)] \\
  & \geq E_{t,x}[f(X_{\tau})] = E_{t,x}[E_{\tau,X_{\tau}}[g(X_{T})]]=E_{t,x}[g(X_T)]  
\end{align*}
holds true, whence equality. A similarly easy, but considerably more far-reaching proof, will be
presented in Section~\ref{sec:prob}. This important observation has
not been taken up by the literature until S\"oren Christensen, Jan
Kallsen and Matthias Lenga finally (almost) proved in ``Are American
options European after all'', see \cite{lenkalsor:20}, that actually the American put in the
Black-Scholes framework has a value function, which can be considered
coming from a European pricing problem, whence the discovery of
Jourdain-Martini arrived in the very heart of American option
theory. It is the purpose of this article to extend this idea to a
non-linear and multi-variate framework: we show that a class of stopper-controller games
can be embedded into a single-player optimal control problem, which
presents a substantial generalization of the results of Jourdain-Martini
or Christensen-Kallsen-Lenga.

We use two perspectives here: first, we consider a probabilistic
approach where the cost and reward structure is driven by an
underlying continuous-time stochastic process. The value of the game
is calculated from the cost-and-reward structure by the use of a
system of evaluation mappings, which is related to non-linear
expectations. The key assumption will be the dynamic programming
principle. Second, from a more analytic viewpoint, we show the
respective statements in terms of fully non-linear partial
differential equations. More precisely, we show that the free boundary
problem occurring in non-linear optimal stopping problems can be
solved in terms of a fully non-linear partial differential equation
(PDE), e.\,g. of classical Hamilton-Jacobi-Bellman (HJB)-type.

The presented framework naturally covers the optimal stopping problems
such as American option pricing, where the market model is driven by a
strong Markov process. Beyond that, the possible non-linearity of the
evaluation maps allows for parameter uncertainty and to cover
stopper-controller games.

In Section~\ref{sec:prob} we will present the main results in an
abstract probabilistic setup. In Section~\ref{sec:PDE} we then discuss
the PDE viewpoint. We then show how to apply the abstract results to
important examples: the benchmark case is American option pricing in
the Black-Scholes market and will be discussed in
Section~\ref{sec:BS}. In Section~\ref{sec:nonlinLevy} we then discuss
non-linear Levy processes which arise for instance in
stopper-controller games and volatility uncertainty. We finally
discuss the example of finite state spaces in
Section~\ref{sec:finite}, in which the HJB equation for the value
functions becomes an ordinary differential equation.

Relevant notation is collected in Appendix~\ref{sec:notation}.

\section{On optimal stopping in a nonlinear framework}
\label{sec:prob}
\subsection{Setup}

We consider a path space $(\Omega,\F)$ with values in some Polish
space $E$, and the coordinate process $X$ together with a filtration
$(\F_t)_{t\geq 0}$ generated by $X$. Let
$B\colon [0,\infty) \to (0,\infty)$ be a function such that $B$ and
$1/B$ are bounded on bounded sets.

We fix a terminal time $T\in (0,\infty)$ and for $t\in [0,T]$ denote
by $\cT_{t,T}$ a class of $[t,T]$-valued random variables
being the set of admissible stopping rules. For consistency we assume that if $s,t\in [0,T]$ with $s\leq t$ then
$\cT_{t,T}\subseteq \cT_{s,T}$.

Moreover, let $\cH$ be a linear subspace of $\cL^0(\Omega)$ such that
$\1_\Omega\in \cH$ and
\begin{equation}
  \label{eq:20}
  \forall Y\in \cH, Z\in \cL^\infty(\Omega):\qquad YZ\in \cH.
\end{equation}
In particular, $\cL^\infty(\Omega)\subseteq\cH$. Elements of the space
$\cH$ have the role of integrable random variables and elements of the
space $\cT_{t,T}$ play the role of stopping times but we emphasize
that no probability measures have been introduced until now. The
evaluation of these random variables will instead be performed by
possibly non-linear expectations.

Let $\{\cE_{t,x}\}_{(t,x)\in [0,T]\times E}$ be a family of (possibly
nonlinear) pre-expectations on $\cH$, this is, a family of functions
\begin{equation}
  \label{eq:9}
  \cE_{t,x}\colon \cH \to \R
\end{equation}
for $(t,x) \in [0,T]\times E$ such that
\begin{enumerate}[label=($\cE$.\Roman*)]
\item\label{i:const} for each constant $c\in \R$, it holds that
  \[ \cE_{t,x}[c] = c.\]
\item\label{i:monoton} for each $(t,x)\in [0,T]\times E$ and each $Y$,
  $Y'\in \cH$ with $Y\leq Y'$ it holds that
  \[ \cE_{t,x}[Y] \leq \cE_{t,x}[Y'].\]
\end{enumerate}

Note that for each $(t,x)\in [0,T]\times E$, $\cE_{t,x}$ is a
non-linear expectation in the sense of
\cite[p. 168]{peng2004nonlinear} and sometimes also called nonlinear
pre-expectation. It is worth stressing that we do not impose any
sublinearity or homogeneity a priori but \ref{i:const} and
\ref{i:monoton} are sufficient to obtain, e.g., certain continuity properties.
For instance taking $Y$, $Y'\in \cH$ we obtain that
\begin{equation*}
   \cE_{t,x}[\pm(Y-Y')]  \leq \cE_{t,x}[ \norm{Y-Y'}{\cL^\infty}]= \norm{Y-Y'}{\cL^\infty}
\end{equation*}
holds true, if $ \norm{Y-Y'}{\cL^\infty} $ is finite.

By assumptions on $\cH$ it holds for each $A\in \F$ that
$\1_{A}\in \cH$. We will call an event $A\in \F$ to be a \emph{full
  $(t,x)$-scenario}, if
\begin{equation}
  \label{eq:22}
  \forall Y\in \cH:\quad \cE_{t,x}[Y\1_{A}] = \cE_{t,x}[Y].
\end{equation}

In addition to~\ref{i:monoton} we impose the following assumption,
which together then ensure a strict monotonicity:
\begin{enumerate}[label=($\cE$.\Roman*), resume]
\item\label{i:monoton_strict} if for $(t,x)\in [0,T]\times E$ and $Y$,
  $Y'\in \cH$ with $Y\leq Y'$ it holds that
  \[ \cE_{t,x}[Y] = \cE_{t,x}[Y'],\] then the event $\{Y=Y'\}$ is a
  full $(t,x)$-scenario.
\end{enumerate}

To stress the role of $\cE_{t,x}$ as $(t,x)$-evaluation we
additionally assume:
\begin{enumerate}[label=($\cE$.\Roman*), resume]
\item\label{i:txeval} for each $(t,x)\in [0,T]\times E$, $\{X_t=x\}$
  is a full $(t,x)$-scenario.
\end{enumerate}
This assumption is consistent with the setup for linear Markov
processes.

\subsection{Value functions and optimal stopping}
For two measurable functions
$\varrho\colon [0,T]\times [0,T] \times \Omega \times C([0,T];\R) \to
\R$ and $g\colon E\to \R$ we define the value function
\begin{equation}
  \label{eq:value_EU}
  v(t,x) := B_t \cE_{t,x}\left[ \varrho(t,T, X, B) + B_T^{-1}g(X_T)\right].
\end{equation}

\begin{remark}
  In typical situations it holds that for $s\leq t$,
  \[ \varrho(t,s, .,.) \equiv 0,\] and $\varrho(s,t,X,B)$ represents
  the (discounted) pre-maturity cashflow. The payoff structure at
  maturity time $T$ is given by the function $g$. Classical examples
  in optimal control are
  \[ \varrho(s,t, \omega, b) = \int_s^t B_r^{-1} c(r,\omega_r) \d r\]
  for a sufficiently nice function $c\colon [0,T]\times E\to \R$. In
  option pricing, $g$ is the terminal payoff and $\varrho$ describes
  possible rebate payments or costs for holding the option.
\end{remark}

To ensure time-consistency we impose the dynamic programming
principle:
\begin{enumerate}[label=($\cE$.\Roman*), resume]
\item\label{i:meas} for each $\tau \in\cT_{0,T}$ the value function
  defined in \ref{eq:value_EU} satisfies
  \[v(\tau, X_\tau), \inf_{0\leq t\leq T} v(t, X_\tau), \sup_{0\leq
      t\leq T} v(t,X_\tau) \in
    \cH\] 
\item\label{i:dpp} for each $(t,x)\in [0,T]\times E$,
  $\tau \in \cT_{t,T}$, it holds that
  \[v(t,x) = B_t\cE_{t,x}\left[\varrho(t, \tau, X, B) + B_\tau^{-1}
      v(\tau, X_\tau)\right].\]
\end{enumerate}

The assumptions on the time-infimum and -supremum allow us to define
the payoff functions
\begin{equation}
  \label{eq:1_sec2}
  f(x):= \inf_{0\leq t\leq T} v(t,x),\qquad h(x):=
  \sup_{0\leq t\leq T} v(t,x),
\end{equation}
for $x\in E$.

The main object of this section are now the non-linear optimal
stopping problems with value at state $(t,x)$ given by:
\begin{equation}
  \label{eq:valuefcts_stopping}
  \begin{aligned}
    u(t,x) &:= \sup_{\tau \in \cT_{t,T}} B_t
    \cE_{t,x}\left[ \varrho(t,\tau, X, B) +  B_\tau^{-1} f(X_\tau) \right],\\
    w(t,x) &:= \inf_{\tau \in \cT_{t,T}} B_t \cE_{t,x}\left[
      \varrho(t,\tau, X, B) + B_\tau^{-1} h(X_\tau) \right],
  \end{aligned}
\end{equation}

\begin{remark}
  If we assume that, in addition, for each $(t,x)\in [0,T]\times E$,
  $\cE_{t,x}$ is sub-linear and positively homogeneous, then by
  \cite{nutz2013constructing} there exist sets of probability measures
  $\cP(t,x)$, such that
  \[ \forall Y\in \cH:\quad \cE_{t,x}[Y] = \sup_{\PP\in \cP(t,x)}
    \EE^{\PP}[Y],\] or,
  \[ \forall Y\in \cH:\quad \cE_{t,x}[Y] = \inf_{\PP\in \cP(t,x)}
    \EE^{\PP}[Y].\] In this situation, $u$ and $w$ are the value
  functions of controller-stopper games, where the first player
  selects a control measure $\PP_{t,x} \in \cP(t,x)$ (and thus the
  dynamics of the reward process $X$), and the second player stops the
  game at a time $\tau \in\cT_{t,T}$. The in-game cash flow is
  $\varrho$ and the payoff at $\tau$ is respectively $f(X_\tau)$ and
  $h(X_\tau)$. Classical examples for such games are American options
  from holder and issuer perspective. We consider more details for
  such examples and choices of $\cP(t,x)$ in
  Sections~\ref{sec:finite},~\ref{sec:BS} and~\ref{sec:nonlinLevy}.
\end{remark}

We get the following general relation between the value functions:
\begin{proposition}\label{prop:ubounds}
  For all $(t,x)\in [0,T]\times E$,
  \[ u(t,x) \leq v(t,x)\leq w(t,x).\]
\end{proposition}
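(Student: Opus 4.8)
The plan is to reduce both inequalities to the monotonicity~\ref{i:monoton} of the pre-expectations together with the dynamic programming principle~\ref{i:dpp}, using only the trivial fact that $f$ and $h$ sandwich the value function $v$. Fix $(t,x)\in[0,T]\times E$ and, for $\tau\in\cT_{t,T}$, abbreviate
\[
  \Phi_\tau(Y):=B_t\cE_{t,x}\!\left[\varrho(t,\tau,X,B)+B_\tau^{-1}Y\right],\qquad Y\in\cH.
\]
Since $B$ is $(0,\infty)$-valued and $1/B$ is bounded on bounded sets while $\tau$ is $[t,T]$-valued, the factor $B_\tau^{-1}$ is strictly positive and bounded; hence multiplication by $B_\tau^{-1}$ maps $\cH$ into $\cH$ by~\eqref{eq:20}, and by~\ref{i:monoton} together with $B_t,B_\tau^{-1}>0$ the map $\Phi_\tau$ is monotone, i.e. $Y\leq Y'$ implies $\Phi_\tau(Y)\leq\Phi_\tau(Y')$. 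In this notation the dynamic programming principle~\ref{i:dpp} reads $\Phi_\tau(v(\tau,X_\tau))=v(t,x)$ for every $\tau$, and the definitions~\eqref{eq:valuefcts_stopping} read $u(t,x)=\sup_{\tau}\Phi_\tau(f(X_\tau))$ and $w(t,x)=\inf_{\tau}\Phi_\tau(h(X_\tau))$.

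Next I would record the pointwise inequalities. By the definition~\eqref{eq:1_sec2} of $f$ and $h$ as the time-infimum and time-supremum of $v$, we have $f(y)\leq v(s,y)\leq h(y)$ for all $(s,y)\in[0,T]\times E$, hence along the path at any admissible rule $\tau\in\cT_{t,T}$,
\[
  f(X_\tau)\ \leq\ v(\tau,X_\tau)\ \leq\ h(X_\tau),
\]
with all three random variables lying in $\cH$ by~\ref{i:meas}. Applying the monotone map $\Phi_\tau$ and substituting the dynamic programming identity $\Phi_\tau(v(\tau,X_\tau))=v(t,x)$ for the middle term gives, for every $\tau\in\cT_{t,T}$,
\[
  \Phi_\tau(f(X_\tau))\ \leq\ v(t,x)\ \leq\ \Phi_\tau(h(X_\tau)).
\]
Taking the supremum over $\tau$ in the left inequality yields $u(t,x)\leq v(t,x)$, and taking the infimum over $\tau$ in the right inequality yields $v(t,x)\leq w(t,x)$, which is the assertion.

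I expect no genuine obstacle here: the entire argument is carried by monotonicity and the DPP, so it is essentially a one-line comparison once the notation is set up. The only point requiring a moment's care is the measurability/integrability bookkeeping — ensuring that every argument fed into $\cE_{t,x}$ actually belongs to $\cH$, so that~\ref{i:monoton} and~\ref{i:dpp} legitimately apply. This is exactly what~\ref{i:meas} (for the value terms $f(X_\tau)=\inf_t v(t,X_\tau)$, $v(\tau,X_\tau)$, $h(X_\tau)=\sup_t v(t,X_\tau)$) and the stability property~\eqref{eq:20} (for the products with the bounded factor $B_\tau^{-1}$) guarantee. Note that the strict-monotonicity and evaluation assumptions~\ref{i:monoton_strict} and~\ref{i:txeval} play no role in these bounds; they would enter only when characterizing equality cases or identifying optimal stopping rules.
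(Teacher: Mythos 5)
Your proof is correct and follows essentially the same route as the paper: the pointwise sandwich $f(X_\tau)\leq v(\tau,X_\tau)\leq h(X_\tau)$, monotonicity~\ref{i:monoton}, and the dynamic programming principle~\ref{i:dpp}, followed by taking the supremum respectively infimum over $\tau$. Your packaging via the monotone map $\Phi_\tau$ and the explicit $\cH$-membership bookkeeping are only cosmetic refinements of the paper's argument, which treats the two inequalities one after the other.
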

\begin{proof}
  Let $(t,x)\in [0,T]\times E$. By definition of $f$ and $v$ we have
  for each $\tau \in \cT_{t,T}$
  \begin{equation}
    \label{eq:domination}
    f(X_\tau) 
    \leq v(\tau, X_\tau)
  \end{equation}
  By monotonicity~\ref{i:monoton} and the DPP~\ref{i:dpp} this yields
  \begin{equation}
    \begin{aligned}
      B_t \cE_{t,x}[\varrho(t,\tau, X, B) + B_\tau^{-1} f(X_\tau)]
      &\leq  B_t \cE_{t,x}[\varrho(t,\tau, X, B) +B_\tau^{-1}  v(\tau, X_\tau)] \\
      &= v(t,x).
    \end{aligned}
  \end{equation}  
  Taking the supremum over $\tau\in \cT_{t,T}$ on the left hand side
  we finally get
  \[u(t,x)\leq v(t,x).\] The proof for the other estimate is similar:
  By monotonicity~\ref{i:monoton} and the DPP~\ref{i:dpp} this yields
  \begin{equation}
    \begin{aligned}
      B_t \cE_{t,x}\left[\varrho(t,\tau, X, B) +
        B_\tau^{-1}h(X_\tau)\right]
      &\geq  B_t \cE_{t,x}[\varrho(t,\tau, X, B) + B_\tau^{-1} v(\tau, X_\tau)] \\
      &= v(t,x).
    \end{aligned}\qedhere
  \end{equation}    
\end{proof}
\begin{remark}\label{rmk:onesideddpp}
  It is clear from the proof that instead of assuming~\ref{i:dpp} it
  would be sufficient to assume the respective one-sided version. That
  is, to obtain
  \[u \leq v\] it suffices to assume that for each
  $(t,x)\in [0,T]\times E$, $\tau \in \cT_{t,T}$, it holds that
  \[v(t,x) \geq B_t\cE_{t,x}\left[\varrho(t, \tau, X, B) +
      B_\tau^{-1}v(\tau, X_\tau)\right].\] For
  \[ v \leq w\] it is sufficient that for each
  $(t,x)\in [0,T]\times E$, $\tau \in \cT_{t,T}$, it holds that
  \[v(t,x) \leq B_t\cE_{t,x}\left[\varrho(t, \tau, X, B) +
      B_\tau^{-1}v(\tau, X_\tau) \right].\]
\end{remark}

To get a deeper understanding of the relation between the value
functions $u$, $v$ and $w$, we we replace \ref{i:meas} by the stronger
assumption that $v$ is continuous. In this case, we define for each
$x\in E$ the sets
\begin{align*}
  t_*(x) &:= \argmin\{v(t,x) \colon 0\leq t\leq T\},\\
  t^*(x) &:= \argmax\{v(t,x) \colon 0\leq t\leq T\}.
\end{align*}
Recall from optimal stopping that the continuation regions are defined
as
\begin{align*}
  C_* &:= \{(t,x)\in [0,T]\times E \colon u(t,x) >
        f(x)\},\\
  C^* &:= \{(t,x)\in [0,T]\times E \colon w(t,x) < h(x)\}.
\end{align*}
To formulate a bound on these regions, we will use the notation that
for $\bar t \in \{t_*, t^*\}$:
\begin{equation}
  \label{eq:15}
  \graph(\bar t) := \{ (t,x)\in [0,T]\times E \colon t\in \bar t(x)\}.
\end{equation}
\begin{corollary}\label{cor:contreg}
  Assume that $v$ is continuous, then for all $x\in E$,
  \[ t_*(x) \subseteq \{ t\in [0,T] \colon u(t,x) = f(x)\},\] and
  \[t^*(x) \subseteq \{ t\in [0,T] \colon w(t,x) = h(x)\} .\] In
  particular,
  \[ C_*\subseteq \graph(t_*)^c ,\quad\text{and}\quad C^*\subseteq
    \graph(t^*)^c.\]
\end{corollary}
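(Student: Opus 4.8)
The plan is to establish the two displayed inclusions directly; the two ``in particular'' statements then follow at once, since $(t,x)\in\graph(t_*)$ means $t\in t_*(x)$, which by the first inclusion forces $u(t,x)=f(x)$ and hence $(t,x)\notin C_*$, i.e. $\graph(t_*)\subseteq C_*^{\,c}$ (and symmetrically for $t^*$, $w$, $h$). I treat the first inclusion; the second is entirely symmetric, exchanging $\inf/\min/f/u/\le$ for $\sup/\max/h/w/\ge$.

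Fix $x\in E$ and $t\in t_*(x)$, so that $v(t,x)=f(x)$ by the definitions of $t_*$ and $f$. Proposition~\ref{prop:ubounds} already gives $u(t,x)\le v(t,x)=f(x)$, so the whole content is the reverse bound $u(t,x)\ge f(x)$. First I would produce a lower bound for the supremum defining $u$ by inserting the immediate stopping rule $\tau\equiv t$ (which I take to be admissible, i.e. $t\in\cT_{t,T}$), obtaining
\[
  u(t,x)\ge B_t\,\cE_{t,x}\!\left[\varrho(t,t,X,B)+B_t^{-1}f(X_t)\right].
\]

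The crucial step is then to identify the right-hand side with $v(t,x)$. Write $Y_f:=\varrho(t,t,X,B)+B_t^{-1}f(X_t)$ and $Y_v:=\varrho(t,t,X,B)+B_t^{-1}v(t,X_t)$, both of which lie in $\cH$ because continuity of $v$ secures the membership requirements underlying~\ref{i:meas}. On the event $A:=\{X_t=x\}$ one has $f(X_t)=f(x)$ and $v(t,X_t)=v(t,x)$, and since $t\in t_*(x)$ these two values coincide; hence $Y_f\1_A=Y_v\1_A$ as random variables. Because $A$ is a full $(t,x)$-scenario by~\ref{i:txeval}, two applications of~\eqref{eq:22} give $\cE_{t,x}[Y_f]=\cE_{t,x}[Y_f\1_A]=\cE_{t,x}[Y_v\1_A]=\cE_{t,x}[Y_v]$. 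Finally the dynamic programming principle~\ref{i:dpp}, applied with the same constant rule $\tau\equiv t$, yields $B_t\,\cE_{t,x}[Y_v]=v(t,x)$. Chaining these gives $u(t,x)\ge B_t\,\cE_{t,x}[Y_f]=B_t\,\cE_{t,x}[Y_v]=v(t,x)=f(x)$, and together with the upper bound this yields $u(t,x)=f(x)$, as required.

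I expect the main obstacle to be the passage through the nonlinear evaluation $\cE_{t,x}$. Monotonicity~\ref{i:monoton} alone, applied to the pointwise inequality $f(X_t)\le v(t,X_t)$, only reproduces the already-known bound $u(t,x)\le v(t,x)$ and points the wrong way; the substitution of $f(X_t)$ by $v(t,X_t)$ inside $\cE_{t,x}$ therefore cannot be effected by monotonicity and must be routed through the full-scenario identity~\eqref{eq:22} furnished by~\ref{i:txeval}. This localization to $\{X_t=x\}$, where the minimizing property $t\in t_*(x)$ makes $f$ and $v(t,\cdot)$ agree, is the real engine of the proof. Secondary points to verify are the admissibility of immediate stopping $t\in\cT_{t,T}$ and that continuity of $v$ indeed guarantees $Y_f,Y_v\in\cH$, so that~\eqref{eq:22} and~\ref{i:dpp} are legitimately applicable.
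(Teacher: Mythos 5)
Your proof is correct and takes essentially the same route as the paper: both rest on the sandwich $f(x)=v(t,x)\geq u(t,x)\geq f(x)$ for $t\in t_*(x)$ (and its mirror image for $t^*$, $w$, $h$) obtained from Proposition~\ref{prop:ubounds}. The only difference is that the paper asserts the lower bound $u(t,x)\geq f(x)$ without comment, whereas you justify it explicitly via the constant stopping rule $\tau\equiv t$, the full-scenario property~\ref{i:txeval} and the DPP~\ref{i:dpp} --- a detail that is genuinely needed here, since $\cE_{t,x}$ is nonlinear and $\varrho(t,t,X,B)$ need not vanish, so this step does not follow from monotonicity alone.
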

\begin{proof}
  Applying Proposition~\ref{prop:ubounds} yields for $x\in E$ and
  $t\in t_*(x)$,
  \[ f(x) = v(t,x) \geq u(t,x) \geq f(x).\] For $t\in t^*(x)$,
  \[h(x) = v(t,x) \leq w(t,x) \leq h(x).\qedhere\]
\end{proof}

We now characterize optimal solutions of the stopping problems, making
the relations between $u$, $v$ and $w$ more precise.
\begin{theorem}\label{thm:wvu}
  Assume that $v$ is continuous, and let $(t,x)\in [0,T]\times E$.
  \begin{enumerate}[label=(\roman*)]
  \item\label{i:vu} For $\tau \in \cT_{t,T}$, it holds that
    \begin{equation*}
      v(t,x) = \cE_{t,x}[\varrho(t,\tau,X,B) + B_\tau^{-1} f(X_T)],
    \end{equation*}
    if, and only if, $\{\tau \in t_*(X_\tau)\}$ is a full
    $(t,x)$-scenario. In this case, then $u(t,x) = v(t,x)$.
  \item\label{i:wv} For $\tau \in \cT_{t,T}$, it holds that
    \begin{equation*}
      v(t,x) = \cE_{t,x}[\varrho(t,\tau,X,B) + B_\tau^{-1} h(X_T)],
    \end{equation*}
    if, and only if, $\{\tau \in t^*(X_\tau)\}$ is a full
    $(t,x)$-scenario. In this case, then $w(t,x) = v(t,x)$.
  \end{enumerate}
\end{theorem}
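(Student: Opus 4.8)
The plan is to prove part \ref{i:vu} in full and to obtain part \ref{i:wv} by the symmetric argument, replacing the supremum by an infimum, $f$ by $h$, $t_*$ by $t^*$, and $u$ by $w$. Throughout I read the asserted identity in the form $v(t,x) = B_t\cE_{t,x}[\varrho(t,\tau,X,B) + B_\tau^{-1} f(X_\tau)]$, so that it matches the definition of $u$ in \ref{eq:valuefcts_stopping}. The whole argument rests on comparing this $f$-expression with the one produced by the dynamic programming principle.

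First I would invoke \ref{i:dpp}, which for the given $\tau \in \cT_{t,T}$ states $v(t,x) = B_t \cE_{t,x}[\varrho(t,\tau,X,B) + B_\tau^{-1} v(\tau, X_\tau)]$. This suggests comparing the two integrands
\[
  Y := \varrho(t,\tau,X,B) + B_\tau^{-1} f(X_\tau), \qquad Y' := \varrho(t,\tau,X,B) + B_\tau^{-1} v(\tau, X_\tau).
\]
Since $f(x) = \inf_{0\le s\le T} v(s,x) \le v(\tau, X_\tau)$ pointwise and $B_\tau > 0$, we have $Y \le Y'$, and the equality set is exactly
\[
  \{Y = Y'\} = \{f(X_\tau) = v(\tau, X_\tau)\} = \{\tau \in t_*(X_\tau)\},
\]
the last identification holding because $\tau$ lies in the argmin $t_*(X_\tau)$ precisely when it attains $\inf_s v(s, X_\tau) = f(X_\tau)$; here continuity of $v$ together with compactness of $[0,T]$ guarantees that this infimum is attained, so that $t_*(X_\tau)$ is genuinely a nonempty argmin. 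Both $Y, Y'$ lie in $\cH$ by the product property of $\cH$ and assumption \ref{i:meas}.

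With this in place the two directions become an application of the strict-monotonicity machinery. For the \emph{only if} direction, the identity reads $\cE_{t,x}[Y] = v(t,x)/B_t = \cE_{t,x}[Y']$ after dividing by $B_t>0$ and using the DPP; since $Y \le Y'$, assumption \ref{i:monoton_strict} forces $\{Y = Y'\} = \{\tau \in t_*(X_\tau)\}$ to be a full $(t,x)$-scenario. For the \emph{if} direction, assume $A := \{\tau \in t_*(X_\tau)\}$ is a full scenario. Because $Y = Y'$ pointwise on $A$, we have $Y\1_A = Y'\1_A$ as elements of $\cH$, and the defining property \ref{eq:22} yields $\cE_{t,x}[Y] = \cE_{t,x}[Y\1_A] = \cE_{t,x}[Y'\1_A] = \cE_{t,x}[Y']$; multiplying by $B_t$ and using the DPP recovers the asserted identity. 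Finally, once the identity holds for this $\tau$, the definition of $u$ gives $u(t,x) \ge v(t,x)$, while Proposition~\ref{prop:ubounds} gives $u(t,x)\le v(t,x)$, whence $u(t,x) = v(t,x)$.

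The step deserving the most care, and the main obstacle, is the clean identification $\{Y=Y'\} = \{\tau \in t_*(X_\tau)\}$ coupled with the correct use of \ref{eq:22} in the \emph{if} direction: since no reference measure has been fixed, one must argue the genuinely pointwise equality $Y\1_A = Y'\1_A$ in $\cH$ rather than an almost-sure statement, which is immediate from $f(X_\tau) = v(\tau, X_\tau)$ holding everywhere on $A$. The symmetric part \ref{i:wv} is then obtained by using $h(x) = \sup_s v(s,x) \ge v(\tau, X_\tau)$, which reverses the comparison to $Y \ge Y'$, with the infimum in the definition of $w$ playing the role of the supremum in that of $u$.
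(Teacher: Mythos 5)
Your proof is correct and follows essentially the same route as the paper's: the dynamic programming principle \ref{i:dpp} reduces everything to comparing $\cE_{t,x}$ of the $f$-integrand and the $v$-integrand, strict monotonicity \ref{i:monoton_strict} gives the \emph{only if} direction, insertion and removal of the indicator of the full scenario via \eqref{eq:22} gives the \emph{if} direction, and Proposition~\ref{prop:ubounds} combined with the definition of $u$ yields $u(t,x)=v(t,x)$. Your additional remarks (membership of $Y,Y'$ in $\cH$, the pointwise rather than almost-sure identification $\{Y=Y'\}=\{\tau\in t_*(X_\tau)\}$, and reading the displayed identity with the factor $B_t$ and $f(X_\tau)$ in place of $f(X_T)$) only make explicit what the paper leaves implicit.
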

\begin{remark}
  Note that for any $\tau \in \cT_{0,T}$:
  \[\{\tau \in t_*(X_\tau)\} = \{(\tau, X_\tau)\in \graph(t_*)\}\] and
  \[\{\tau \in t^*(X_\tau)\} = \{(\tau,X_\tau)\in \graph(t^*)\}\]
  which are measurable since $\graph(t_*)$ and $\graph(t^*)$ are
  closed by Lemma~\ref{lem:graphclosed}.
\end{remark}
\begin{remark}
  There is a quantitative and more general version of Theorem
  \ref{thm:wvu} in case that all appearing quantities are
  non-negative, i.e.~$ \rho \geq 0 $ and $g \geq 0$. We shall
  formulate it in the case of~\ref{i:vu}. Instead of assuming that
  $\{\tau \in t_*(X_\tau)\}$ is a full $(t,x)$-scenario, we assume
  that there is $ \epsilon \geq 0 $ such that
  \[
    B_t \cE_{t,x}[ \1_{\tau\in t_*(X_\tau)} \left(\varrho(t,\tau, X,
      B) + B_\tau^{-1}v(\tau, X_\tau)\right)] \geq v(t,x) - \epsilon
    \, .
  \]
  Conceptually speaking this is the case when
  $\{\tau \in t_*(X_\tau)\}$ is a very likely scenario (in this robust
  setting). The assumption of Theorem~\ref{thm:wvu} is included with
  $ \epsilon = 0 $.

  Then we can conclude, following the lines of the proof below, that
  \[
    v(t,x) \geq u(t,x) \geq v(t,x) - \epsilon \, ,
  \]
  i.e.~the value function $u$ is sandwiched between $v$ and
  $v-\epsilon$. The consequences of this phenomenon will be discussed
  in upcoming work.

  In a completely analogous manner this is also true for~\ref{i:wv}.
\end{remark}
\begin{proof}
  We will only show the proof of~\ref{i:vu}, since~\ref{i:wv} is
  similar.

  Assume that $\{\tau\in t_*(X_\tau)\}$ is a full $(t,x)$-scenario. We
  directly compute, using~\ref{i:dpp}:
  \begin{equation}
    \begin{aligned}
      B_t& \cE_{t,x}[\varrho(t,\tau, X, B) + B_\tau^{-1}f(X_\tau)] \\
      &=B_t \cE_{t,x}[ \1_{\tau\in t_*(X_\tau)}\left(\varrho(t,\tau,
        X, B) +
        B_\tau^{-1} f(X_\tau)\right)] \\
      &=B_t \cE_{t,x}[ \1_{\tau\in t_*(X_\tau)} \left(\varrho(t,\tau, X, B) +  B_\tau^{-1}v(\tau, X_\tau)\right)] \\
      &=B_t \cE_{t,x}\left[\varrho(t,\tau, X, B) + B_\tau^{-1} v(\tau, X_\tau)\right] \\
      &= v(t,x).
    \end{aligned}
  \end{equation}
  Taking the supremum over all stopping times and using
  Proposition~\ref{prop:ubounds} we get
  \[ u(t,x) \geq v(t,x) \geq u(t,x).\]

  Now, assume that
  \[ B_t \cE_{t,x} [\varrho(t,\tau, X, B) + B_\tau^{-1}f(X_\tau)] =
    v(t,x).\] Then, by~\ref{i:dpp}
  \begin{equation}
    \cE_{t,x} [\varrho(t,\tau, X, B) + B_\tau^{-1}f(X_\tau)] = B_t^{-1} v(t,x) =  \cE_{t,x} [\varrho(t,\tau, X, B) + B_\tau^{-1}v(\tau, X_\tau)].
  \end{equation}
  On the other hand, we know that $f(y)\leq v(s,y)$ for all
  $(s,y)\in [0,T]\times E$ by definition, and hence by~\ref{i:monoton}
  \begin{equation}
    \label{eq:16}
    \varrho(t,\tau, X, B) + B_\tau^{-1}f(X_\tau) \leq  \varrho(t,\tau, X, B) + B_\tau^{-1}v(\tau, X_\tau).
  \end{equation}
  The strict monotonicity~\ref{i:monoton_strict} then yields that
  $\{f(X_\tau) = v(\tau, X_\tau)\}$ is a full $(t,x)$-scenario. To
  finish the proof we just have to recall that
  \[ \{f(X_\tau) = v(\tau,X_\tau)\} = \{ \tau \in
    t_*(X_\tau)\}. \qedhere\]
\end{proof}

In order to ensure existence of such an optimal stopping time, we need
a deeper knowledge of the sets $t_*(x)$ and $t^*(x)$, for $x\in E$. We
will delay the study of some of the properties to
Section~\ref{ssec:tstar} below, and discuss the implications first.

\begin{corollary}\label{cor:tstaroptimal_convex}
  Assume that $X$ and $v$ are continuous and that $t_*(x)$ is convex
  for all $x\in E$. Let $(t,x)\in [0,T]\times E$ be such that
  $t\leq t_*(x)$ and define
  $\tau_*:= \inf\{s>t\,\colon\, s\in t_*(X_s)\}$. Then,
  $\tau_*\in [t,T]$ and if $\tau_*\in \cT_{t,T}$, then
  \[ u(t,x) = B_t\cE_{t,x}[\varrho(t,\tau_*, X,B) + B_{\tau_*}^{-1}
    f(X_{\tau_*})] = v(t,x).\] Similarly, define
  $\tau^*:= \inf\{s>t\,\colon\, s\in t^*(X_s)\}$. If instead of
  $t\leq t_*(x)$ we have $t\leq t^*(x)$, then $\tau^*\in [t,T]$ and if
  $\tau^*\in \cT_{t,T}$, then
  \[ w(t,x) = B_t\cE_{t,x}[\varrho(t,\tau^*, X, B) +
    B_{\tau^*}^{-1}f(X_{\tau^*})] = v(t,x).\]
\end{corollary}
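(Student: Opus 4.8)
The plan is to reduce everything to Theorem~\ref{thm:wvu}\ref{i:vu}: it suffices to show that $\tau_* \in [t,T]$ and that $\{\tau_* \in t_*(X_{\tau_*})\}$ is a full $(t,x)$-scenario, for then the theorem delivers at once the optimality $u(t,x)=v(t,x)$ and the identity with the $\cE_{t,x}$-evaluation. Since $s\mapsto(s,X_s)$ is continuous and $\graph(t_*)$ is closed by Lemma~\ref{lem:graphclosed}, $\tau_*$ is a genuine hitting time, and the whole matter is pathwise. I would carry out the analysis on the event $\{X_t=x\}$, which by~\ref{i:txeval} is a full $(t,x)$-scenario.

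First I would record a reduction lemma: any $A'\in\F$ containing a full $(t,x)$-scenario $A$ is itself full. Indeed, for $Y\in\cH$ one has $Y\1_{A'}\in\cH$ by \eqref{eq:20}, and applying fullness of $A$ twice (once to $Y\1_{A'}$, once to $Y$) together with $A\cap A'=A$ gives
\[
  \cE_{t,x}[Y\1_{A'}] = \cE_{t,x}[Y\1_{A'}\1_A] = \cE_{t,x}[Y\1_A] = \cE_{t,x}[Y].
\]
Hence it is enough to prove the inclusion $\{X_t=x\}\subseteq\{\tau_*\in t_*(X_{\tau_*})\}$, i.e.\ that on $\{X_t=x\}$ we have $\tau_*\le T$ and $(\tau_*,X_{\tau_*})\in\graph(t_*)$. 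Next, continuity of $v$ makes each $t_*(y)$ compact, and convexity turns it into a compact interval $t_*(y)=[\alpha(y),\beta(y)]$; from closedness of $\graph(t_*)$ one reads off that $\alpha$ is lower semicontinuous and $\beta$ is upper semicontinuous. Consequently the sets $R_-:=\{(s,y):s<\alpha(y)\}$ and $R_+:=\{(s,y):s>\beta(y)\}$ are open, and $[0,T]\times E=R_-\sqcup\graph(t_*)\sqcup R_+$.

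The core is then a connectedness (intermediate value) argument on $\{X_t=x\}$ for the continuous curve $\gamma(s):=(s,X_s)$. At $s=t$ the hypothesis $t\le t_*(x)$ reads $t\le\alpha(x)$, so $\gamma(t)\notin R_+$; at $s=T$ the automatic bound $\beta(X_T)\le T$ gives $\gamma(T)\notin R_-$. If $\gamma$ avoided $\graph(t_*)$ on all of $[t,T]$, then $\gamma([t,T])$ would be a connected subset of the disjoint union of the open sets $R_-$ and $R_+$ with $\gamma(t)\in R_-$ and $\gamma(T)\in R_+$, which is impossible. Hence $\gamma$ meets the closed set $\graph(t_*)$; since $\gamma(t)\in R_-$ keeps the curve in the open set $R_-$ for a while, the first hitting time exceeds $t$ and equals $\tau_*$, and closedness of $\graph(t_*)$ yields $(\tau_*,X_{\tau_*})\in\graph(t_*)$ with $\tau_*\in(t,T]$. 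Feeding $\{X_t=x\}\subseteq\{\tau_*\in t_*(X_{\tau_*})\}$ into Theorem~\ref{thm:wvu}\ref{i:vu} gives the first chain of equalities. The assertion for $w$, $h$, $t^*$ and $\tau^*$ follows by the symmetric argument, with $\min$, $R_-$ and $R_+$ replaced by $\max$ and their counterparts and Theorem~\ref{thm:wvu}\ref{i:wv} in place of \ref{i:vu}.

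The main obstacle is precisely the crossing step, namely securing $\tau_*\le T$ (nonemptiness of the hitting set): this is exactly where convexity of $t_*(x)$ enters, through the openness of $R_\pm$, and where the right endpoint $s=T$ is seen to lie on the correct side because $\beta(X_T)\le T$ holds automatically. The one genuinely delicate point is the boundary case $t=\alpha(x)$, in which $(t,x)$ itself already lies on $\graph(t_*)$: there the strict inequality $s>t$ in the definition of $\tau_*$ must be treated separately, since in degenerate situations the interval $t_*(X_s)$ may collapse as $s\downarrow t$ and the path may leave the graph immediately. In this case the conclusion $u(t,x)=v(t,x)$ is in any event available directly from Corollary~\ref{cor:contreg} (as then $t\in t_*(x)$, so $u(t,x)=f(x)=v(t,x)$), and closedness of $\graph(t_*)$ secures $(\tau_*,X_{\tau_*})\in\graph(t_*)$ whenever $\tau_*$ is finite; I would isolate this case and dispatch it before running the connectedness argument in the generic regime $t<\alpha(x)$.
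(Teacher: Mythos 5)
Your proposal is correct and follows, in essence, the paper's own route: reduce to Theorem~\ref{thm:wvu}.\ref{i:vu} by showing that $\{\tau_*\in t_*(X_{\tau_*})\}$ is a full $(t,x)$-scenario, which becomes a pathwise crossing statement on $\{X_t=x\}$. Your connectedness argument with the open regions $R_\pm$ is exactly the content of the paper's Lemma~\ref{lem:graph_intersect} (whose proof uses the same semicontinuity facts from Lemma~\ref{lem:graphclosed}, packaged as an intermediate-value argument), and your superset-reduction lemma is what the paper does inline via an indicator computation; these are presentational rather than substantive differences. The one place where you genuinely diverge is the boundary case $t=\min t_*(x)$: the paper's proof simply asserts, via the second indicator in its display \eqref{eq:18}, that $\tau_*\in t_*(X_{\tau_*})$ holds on the event $\{t\in t_*(X_t)\}$, with no justification. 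Your caution here is warranted: because $\tau_*$ is defined with the strict inequality $s>t$, the unconditional claim $\tau_*\in[t,T]$ can actually fail in this case --- for instance if $v(s,y)=T-s$ for all $y$, so that $t_*\equiv\{T\}$, and one takes $t=T$, then $\tau_*=\inf\emptyset=\infty$. So your isolation of this case, the recovery of $u(t,x)=v(t,x)$ from Corollary~\ref{cor:contreg}, and the closedness remark giving $(\tau_*,X_{\tau_*})\in\graph(t_*)$ whenever $\tau_*\leq T$ (which, combined with your reduction lemma and Theorem~\ref{thm:wvu}, still yields the displayed equalities whenever $\tau_*\in\cT_{t,T}$) constitute a more careful treatment than the paper's own proof, which glosses over precisely this point.
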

\begin{proof}
  First, if $t< \min t_*(X_t)$, then $[t,T]\ni s\mapsto (s,X_s)$
  intersects with $\graph(t_*)$ by Lemma~\ref{lem:graph_intersect}
  which will happen the first time at $\tau_*$, so that
  $\tau_* \in t_*(X_{\tau_*})$. Hence, by~\ref{i:txeval} for all
  $Y\in \cH$,
  \begin{multline}
    \label{eq:18}
    \cE_{t,x}[Y] = \cE_{t,x}[Y\1_{X_t=x} \left(\1_{t<\min
        t_*(X_{\tau_*})} + \1_{t\in t_*(X_t)}\right)] \\= \cE_{t,x}[Y
    \left(\1_{t<\min t_*(X_{\tau_*}), \tau_*\in X_{\tau_*}}+ \1_{t\in
        t_*(X_t), \tau_*\in t_*(X_{\tau_*})}\right)]\\ =
    \cE_{t,x}[Y\1_{\tau_*\in t_*(X_{\tau_*})}].
  \end{multline}
  Hence, Theorem~\ref{thm:wvu}.\ref{i:vu} yields the result for
  $\tau_*$. The proof for $\tau^*$ works in the same way using
  Theorem~\ref{thm:wvu}.\ref{i:wv}.
\end{proof}

\begin{corollary}\label{cor:ctspath}
  Assume that $X$ and $v$ are continuous and that there exists a
  continuous function $\theta\colon E\to [0,T]$, such that
  \begin{equation}
    \label{eq:17}
    \theta(x) \in t_*(x) \, ,
  \end{equation}
  for all $ x \in E $. Let $(t,x) \in [0,T]\times E$ be such that $t\leq \theta(x)$ and set
  $\tau := \inf\{s\geq t\colon s= \theta(X_s)\}$. Then,
  $\{\tau\in t_*(X_\tau)\}$ is a full $(t,x)$-scenario and if in
  addition $\tau_*\in \cT_{0,T}$ then
  \begin{equation}
    \label{eq:19}
    u(t,x) = B(t)\cE_{t,x}[\varrho(t,\tau,X,B) +
    B_\tau^{-1}f(X_\tau)] = v(t,x). 
  \end{equation}
  Similarly, the result holds true when replacing $(t_*, u, f)$ by
  $(t^*, w, h)$.
\end{corollary}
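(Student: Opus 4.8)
The plan is to deduce the result directly from Theorem~\ref{thm:wvu}.\ref{i:vu}, whose only hypothesis beyond admissibility of $\tau$ is that $\{\tau\in t_*(X_\tau)\}$ be a full $(t,x)$-scenario. I would establish this by proving the stronger pointwise inclusion $\{X_t=x\}\subseteq\{\tau\in t_*(X_\tau)\}$ and then combining two facts: that $\{X_t=x\}$ is a full $(t,x)$-scenario by~\ref{i:txeval}, and that any event containing a full $(t,x)$-scenario is itself full.

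The inclusion is an elementary intermediate value argument carried out path-by-path on $\{X_t=x\}$. Define $\phi(s):=s-\theta(X_s)$ for $s\in[t,T]$, which is continuous because $X$ and $\theta$ are continuous. On $\{X_t=x\}$ one has $\phi(t)=t-\theta(x)\leq 0$ by the hypothesis $t\leq\theta(x)$, while $\phi(T)=T-\theta(X_T)\geq 0$ since $\theta$ takes values in $[0,T]$. By the intermediate value theorem the set $\{s\in[t,T]\colon \phi(s)=0\}$ is nonempty, and it is closed since $\phi$ is continuous; hence its infimum $\tau$ belongs to it, so that $\tau=\theta(X_\tau)$ and in particular $\tau\in[t,T]$. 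Because $\theta(y)\in t_*(y)$ for every $y\in E$, evaluating at $y=X_\tau$ yields $\tau=\theta(X_\tau)\in t_*(X_\tau)$, which is exactly the desired inclusion.

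For the second fact, suppose $A\subseteq B$ with $A$ a full $(t,x)$-scenario. Then for every $Y\in\cH$, using $\1_A\1_B=\1_A$ together with fullness of $A$ applied first to $Y\1_B\in\cH$ and then to $Y$,
\[
  \cE_{t,x}[Y\1_B]=\cE_{t,x}[(Y\1_B)\1_A]=\cE_{t,x}[Y\1_A]=\cE_{t,x}[Y],
\]
so $B$ is full as well. Applying this with $A=\{X_t=x\}$ and $B=\{\tau\in t_*(X_\tau)\}$ shows that the latter is a full $(t,x)$-scenario. Since $\tau$ is $[t,T]$-valued and admissible, i.e.\ $\tau\in\cT_{t,T}$, Theorem~\ref{thm:wvu}.\ref{i:vu} applies verbatim and delivers both the middle equality of~\eqref{eq:19} and $u(t,x)=v(t,x)$. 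The statement for $(t^*,w,h)$ is obtained identically, running the same argument around $t^*$ and invoking Theorem~\ref{thm:wvu}.\ref{i:wv} in place of~\ref{i:vu}.

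I expect the main obstacle to be the first step: verifying that the first hitting time $\tau$ of the graph of $\theta$ actually lands on that graph, i.e.\ that $\tau=\theta(X_\tau)$. This rests on the closedness of the zero set of $\phi$ together with the two endpoint sign conditions, and crucially on restricting to the full scenario $\{X_t=x\}$, without which $\phi(t)\leq 0$ need not hold (it is only there that $\theta(X_t)=\theta(x)\geq t$ is guaranteed). The remaining steps, namely the superset-of-a-full-scenario observation and the appeal to Theorem~\ref{thm:wvu}, are routine.
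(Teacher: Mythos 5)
Your proof is correct and takes essentially the same route as the paper's: a pathwise topological argument showing the hitting time lands on the graph of $\theta$ (the paper applies the Brouwer fixed point theorem to $s\mapsto \theta(X_s)\vee t$, which in this one-dimensional setting is the same as your intermediate value argument for $s\mapsto s-\theta(X_s)$), followed by the same indicator manipulation built on~\ref{i:txeval} and the multiplicative stability~\eqref{eq:20} of $\cH$ to upgrade fullness of $\{X_t=x\}$ to fullness of $\{\tau\in t_*(X_\tau)\}$, and finally an appeal to Theorem~\ref{thm:wvu}.\ref{i:vu} (resp.~\ref{i:wv}). Your packaging of the second step as a standalone fact (any superset of a full $(t,x)$-scenario is full) is a tidy formulation of exactly the chain of equalities the paper writes out inline in~\eqref{eq:21}.
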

\begin{proof}
  The function $\eta\colon s\mapsto \theta(X_{s})\vee t$ is continuous
  and mapping $[t,T]$ into $[t,T]$, if $t\leq \theta(X_t)$. By Brouwer
  fixed point theorem $\eta$ admits a non-empty set of fixed points
  $A$, which is closed as a level set of a continuous function. By
  definition and $t\leq\theta(X_t)$ it holds that $\tau = \min A$ and
  thus $\tau = \theta(X_\tau) \in t_*(X_\tau)$. This means that for
  each $Y\in \cH$
  \begin{multline}
    \label{eq:21}
    \cE_{t,x}[Y] = \cE_{t,x}[ Y\1_{t\leq \theta(x)}] = \cE_{t,x}[ Y\1_{t\leq \theta(X_t)}] \\
    = \cE_{t,x}[ Y\1_{t\leq \theta(X_t)}\1_{\tau \in t_*(X_\tau)}] =
    \cE_{t,x}[ Y\1_{\tau \in t_*(X_\tau)}].
  \end{multline}
  Here, we used \ref{i:txeval} in the second and in the last step. But
  this means that the assumptions of Theorem~\ref{thm:wvu}.\ref{i:vu}
  are satisfied for all $(0,x)$, $x\in E$ which yields the result.
\end{proof}

\subsection{Properties of $t_*$ and $t^*$}
\label{ssec:tstar}
\begin{lemma}
  \label{lem:graphclosed}
  Assume that $v$ is continuous. Then, $\graph(t_*)$ and $\graph(t^*)$
  are closed, $x\mapsto \max t_*(x)$ and $x\mapsto \max t^*(x)$ are
  upper semicontinuous and $x\mapsto \min t_*(x)$ and
  $x\mapsto \min t^*(x)$ are lower semicontinuous.
\end{lemma}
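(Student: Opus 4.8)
The plan is to establish the four assertions in a chain: first that the argmin/argmax sets are nonempty and closed in $[0,T]$ (so the extremal selections are well-defined), then that the two graphs are closed, and finally to deduce the semicontinuity of the extremal selections directly from graph closedness together with the compactness of $[0,T]$. Since $E$ is Polish, hence metrizable, sequential characterizations of closedness and of semicontinuity are available throughout.

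First I would record that, because $v$ is continuous and $[0,T]$ is compact, for each fixed $x\in E$ the map $t\mapsto v(t,x)$ attains its minimum and maximum on $[0,T]$. Hence $t_*(x)$ and $t^*(x)$ are nonempty, and as sublevel/superlevel sets of a continuous function they are closed in the compact interval $[0,T]$, so $\max t_*(x)$, $\min t_*(x)$, $\max t^*(x)$ and $\min t^*(x)$ all exist.

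Next, to show that $\graph(t_*)$ is closed, I would take a sequence $(t_n,x_n)\in\graph(t_*)$ with $(t_n,x_n)\to(t,x)$ in $[0,T]\times E$ and verify $t\in t_*(x)$. Fixing an arbitrary $s\in[0,T]$, minimality of $t_n$ gives $v(t_n,x_n)\leq v(s,x_n)$; passing to the limit and using continuity of $v$ yields $v(t,x)\leq v(s,x)$. As $s$ is arbitrary, $t\in t_*(x)$, so $\graph(t_*)$ is closed. Equivalently, one may note $\graph(t_*)=\{(t,x)\colon v(t,x)=f(x)\}$, but the direct sequential argument avoids having to prove continuity of $f$ separately. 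The argument for $\graph(t^*)$ is identical with the inequality reversed.

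Finally, for the semicontinuity statements I would exploit the closed graph. To see that $x\mapsto\max t_*(x)$ is upper semicontinuous, take $x_n\to x$, put $s_n:=\max t_*(x_n)$, and choose a subsequence along which $s_n$ converges to $\limsup_n s_n=:s^*\in[0,T]$; the points $(s_n,x_n)\in\graph(t_*)$ then converge to $(s^*,x)$, and closedness gives $s^*\in t_*(x)$, whence $\limsup_n\max t_*(x_n)=s^*\leq\max t_*(x)$. The same subsequence extraction with minima in place of maxima shows $\liminf_n\min t_*(x_n)\geq\min t_*(x)$, i.e.\ lower semicontinuity of $x\mapsto\min t_*(x)$, and the two claims for $t^*$ follow verbatim. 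The only point needing care is the existence of these convergent subsequences, which is supplied precisely by the compactness of the parameter interval $[0,T]$; beyond that the argument is routine and I anticipate no genuine obstacle.
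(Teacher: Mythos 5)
Your proposal is correct and follows essentially the same route as the paper: the sequential argument for graph closedness (fixing arbitrary $s$, using $v(t_n,x_n)\leq v(s,x_n)$ and passing to the limit by continuity of $v$) is identical, and the semicontinuity claims are likewise deduced from graph closedness plus compactness of $[0,T]$. The only cosmetic differences are that the paper handles $t^*$ by the symmetry $v\mapsto -v$ rather than repeating the argument, and your subsequence extraction makes explicit a step the paper leaves implicit in its sequential criterion for semicontinuity.
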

\begin{proof}
  Replacing $v$ by $-u$ the statements for $t_*$ and $t^*$ are
  equivalent, so that, without loss of generality, we focus on $t_*$.
  Let $(t_n,x_n)_{n\in \N}$ be a sequence in $\graph(t_*)$ such that
  $\lim_{n\to \infty} (t_n,x_n) = (t,x)$ for some
  $(t,x)\in [0,T]\times E$. By definition of $t_*$, it for all
  $n\in \N$ and all $s\in [0,T]$ that
  \[v(t_n,x_n) \leq v(s,x_n). \] Hence, by continuity of $v$, for all
  $s\in[0,T]$
  \[ v(t,x) = \lim_{n\to\infty} v(t_n,x_n) \leq \lim_{n\to\infty}
    v(s,x_n) = v(s,x).\] This yields $(t,x)\in \graph(t_*)$ which is
  thus closed.

  As level set of a continuous function, the set $t_*(x)$ is closed
  for each $x\in E$, in fact even compact. In particular, the maximum
  and minimum of $\graph(t_*)$ are well defined.  Let
  $(x_n)_{n\in \N}$ be a sequence in $E$ such that for some
  $(\bar t,\bar x) \in [0,T]\times E$ it holds that
  $\lim_{n\to\infty} x_n = \bar x$ and for $t_n:= \max t_*(x_n)$, it
  holds that $\lim_{n\to\infty} t_n = \bar t$. By continuity of $v$ we
  get $(t_n, x_n)\in\graph(t_*)$ and by closedness of the graph of
  $t_*$ also $(\bar t,\bar x)\in \graph(t_*)$. Thus,
  $\bar t\leq \max t_*(\bar x)$, which yields upper semicontinuity. In
  the same way we get lower semicontinuity of $x\to \min t_*(x)$
\end{proof}

\begin{lemma}
  \label{lem:graph_intersect}
  Assume that $v$ is continuous and that $t_*(x)$ is convex for all
  $x\in E$. Let $(t,x) \in E$ with $t<\min t_*(x)$ and $y\in E$. Then,
  any continuous path in $[0,T]\times E$ from $(t,x)$ to $(T,y)$
  intersects $\graph(t_*)$. The same holds true for $t^*$.
\end{lemma}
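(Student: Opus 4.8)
The plan is to argue by contradiction, exploiting the connectedness of the parameter interval of the path together with the semicontinuity of the endpoints of $t_*$ recorded in Lemma~\ref{lem:graphclosed}. Parametrize the path as a continuous map $\gamma = (\gamma_1,\gamma_2)\colon [0,1]\to [0,T]\times E$ with $\gamma(0) = (t,x)$ and $\gamma(1) = (T,y)$, and abbreviate $m(z) := \min t_*(z)$ and $M(z) := \max t_*(z)$. Since $t_*(z)$ is a compact level set of the continuous function $v(\cdot,z)$ and is convex by hypothesis, it is exactly the interval $[m(z),M(z)]$; consequently $\gamma$ meets $\graph(t_*)$ precisely when there exists some $s$ with $m(\gamma_2(s)) \leq \gamma_1(s) \leq M(\gamma_2(s))$.

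First I would introduce the two escape sets
\[
  A := \{s\in[0,1] \colon \gamma_1(s) < m(\gamma_2(s))\},\qquad
  B := \{s\in[0,1] \colon \gamma_1(s) > M(\gamma_2(s))\},
\]
on which the path runs strictly below, respectively strictly above, the argmin interval; these are disjoint because $m\leq M$. The crucial structural observation is that both are open in $[0,1]$: Lemma~\ref{lem:graphclosed} tells us $m$ is lower semicontinuous and $M$ is upper semicontinuous, so $\phi(s) := \gamma_1(s) - m(\gamma_2(s))$ is upper semicontinuous (a continuous function minus a lower semicontinuous one) while $\psi(s) := \gamma_1(s) - M(\gamma_2(s))$ is lower semicontinuous. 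Hence $A = \{\phi < 0\}$ and $B = \{\psi > 0\}$ are open.

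Next I would read off the endpoint memberships. At $s=0$ the hypothesis $t < \min t_*(x)$ gives $\gamma_1(0) < m(\gamma_2(0))$, so $0\in A$. At $s=1$ we always have $\gamma_1(1) = T \geq M(y)$ because $t_*(y)\subseteq [0,T]$; if $T = M(y)$ then $(T,y)\in\graph(t_*)$ and we are done outright, so we may assume $T > M(y)$ and thus $1\in B$. Now suppose, for contradiction, that $\gamma$ never meets $\graph(t_*)$. By the interval description of $t_*$ coming from convexity, every parameter then lies strictly below or strictly above the argmin interval, i.e.\ $[0,1] = A\cup B$ with $A$ and $B$ disjoint, nonempty and open. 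This contradicts the connectedness of $[0,1]$. Therefore some $s^*$ lies in neither set, so $\gamma_1(s^*)\in [m(\gamma_2(s^*)),M(\gamma_2(s^*))] = t_*(\gamma_2(s^*))$, which is exactly $\gamma(s^*)\in\graph(t_*)$.

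The hard part --- and the only place convexity is genuinely needed --- is the step reducing ``not on the graph'' to ``strictly below or strictly above'': without convexity a parameter could satisfy $m(\gamma_2(s))\leq \gamma_1(s)\leq M(\gamma_2(s))$ while still missing the (possibly gappy) set $t_*(\gamma_2(s))$, and the clean two-set disconnection would collapse. The secondary point requiring care is matching the semicontinuity of $m$ and $M$ to the correct open sublevel and superlevel sets. Finally, the claim for $t^*$ follows by applying the same argument to $-v$ in place of $v$: under this substitution $t_*$ and $t^*$ interchange, $-v$ is again continuous, and the relevant convexity and semicontinuity hold verbatim, so I would simply invoke this symmetry rather than repeat the proof.
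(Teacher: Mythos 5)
Your proof is correct, and it organizes the topological step differently from the paper. The paper runs a first-crossing argument: it sets $\bar\lambda := \inf\{\lambda \colon t(\lambda) \geq \min t_*(x(\lambda))\}$, shows $\bar\lambda \in (0,1]$, and then sandwiches $\bar t = t(\bar\lambda)$ between $\min t_*(\bar x)$ and $\max t_*(\bar x)$ by a left-limit estimate (using upper semicontinuity of $x\mapsto \max t_*(x)$) and a right-approximating sequence (using lower semicontinuity of $x\mapsto \min t_*(x)$), before invoking convexity exactly as you do. You replace this one-sided limit analysis by a disconnection argument: the strictly-below set $A$ and strictly-above set $B$ are open (your matching of the two semicontinuities from Lemma~\ref{lem:graphclosed} to the correct sublevel and superlevel sets is exactly right), disjoint, and contain $0$ and $1$ respectively, so connectedness of $[0,1]$ yields a parameter in neither, which convexity then places on $\graph(t_*)$. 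The two proofs consume identical inputs --- the semicontinuity statements of Lemma~\ref{lem:graphclosed} and convexity of $t_*(x)$, which both proofs use only at the very last step to convert the sandwich $\min t_* \leq \cdot \leq \max t_*$ into membership in $t_*$ --- but yours is arguably cleaner: the connectedness argument absorbs the $\limsup$/$\liminf$ bookkeeping, and you treat the boundary case $T = \max t_*(y)$ explicitly rather than leaving it folded into the crossing-time analysis. Your reduction of the $t^*$ statement to the $t_*$ statement via $v \mapsto -v$ is the same symmetry the paper uses (in Lemma~\ref{lem:graphclosed}).
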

\begin{proof}
  Let
  $[0,1]\ni \lambda \mapsto (t(\lambda), x(\lambda)) \in [0,T]\times
  E$ be a continuous path from $(t,x)$ to $(T,y)$ and
  \[ \bar \lambda = \inf\{\lambda \in [0,1] \colon t(\lambda) \geq
    \min t_*(x(\lambda))\}, \quad \bar t:= t(\bar \lambda),\quad \bar
    x := x(\bar \lambda).\] Then, $\bar \lambda >0$ since
  $\graph(t_*)$ is closed by Lemma~\ref{lem:graphclosed}, and
  $\bar\lambda \le 1$ since $t(1) = T\geq \min t_*(y)$ for all
  $y\in E$. Moreover, for all $\lambda <\bar \lambda$,
  \[ t(\lambda) < \min t_*(x(\lambda)) \leq \max t_*(x(\lambda)).\]
  Thus,
  \[ \bar t = \lim_{\lambda \nearrow \bar\lambda} t(\lambda)\leq
    \limsup_{\lambda \nearrow \bar \lambda} \max t_*(x(\lambda)) \leq
    \limsup_{x \to \bar x} \max t_*(x)) \leq \max t_*(\bar x).\] Here,
  we used the upper semicontinuity of $x\mapsto \max t_*(x)$ which is
  shown in Lemma~\ref{lem:graphclosed}. Let $(\lambda_n)_{n\in \N}$ be
  a non-increasing sequence in $[0,1]$ such that
  $\lambda_n\geq \bar \lambda$ and
  $t(\lambda_n)\geq \min t_*(x(\lambda_n))$ for all $n\in \N$. Then,
  \begin{equation}
    \label{eq:times}
    \bar t = \lim_{n\to\infty} t(\lambda_n) \geq \liminf_{n\to\infty}
    \min t_*(x(\lambda_n)) \geq \min t_*(x(\bar \lambda)), 
  \end{equation}
  by lower semicontinuity of $x\mapsto \min t_*(x)$; see
  Lemma~\ref{lem:graphclosed}. Summarizing, we have seen that
  \[ \min t_*(\bar x) \leq \bar t\leq \max t_*(\bar x),\] and
  convexity of $t_*(\bar x)$ yields $\bar t\in t_*(\bar x)$.
\end{proof}

\begin{remark}
  If $t\mapsto v(t,x)$ has a unique minimizer, then $t_*(x)$ is a
  singleton and hence convex. Note that if $t_*(x)$ is a singleton for
  all $x$, then $x\to t_*(x)$ is continuous since the graph of $t_*$
  is closed; as seen in Lemma~\ref{lem:graphclosed}. Here, we abuse
  the notation and write $t_*(x)$ for the element it contains.
\end{remark}
\begin{remark}
  If $t\mapsto v(t,x)$ is convex, then $t_*(x)$ is convex. If
  $t\mapsto v(t,x)$ is concave, then $t_*(x)$ is either a singleton
  and thus convex, or $t_*(x)$ has cardinality $2$ and is not
  convex. Vice versa, $t^*(x)$ is a singleton or has cardinality $2$
  if $t\mapsto v(t,x)$ is convex and $t^*(x)$ is convex, if
  $t\mapsto v(t,x)$ is concave.
\end{remark}

\section{Kolmogorov Equations and Free Boundary Problems}
\label{sec:PDE}

Often, the value function of a European option is the (unique)
solution of the Kolmogorov backward PDE. The value function of an
American option is given by a free boundary problem, where the optimal
stopping boundary separates the two regimes (continuation / stopping)
of the equation. In this section we study this representation of the value function.

\subsection{Solving a free boundary problem}
We consider a state space $E\subseteq \R^d$ for some $d\geq 1$ and a
possibly non-linear operator $L\colon \dom(L)\to C([0,T]\times E;\R)$
such that $\dom(L)\subseteq C^{1,2}([0,T],E;\R)$, and
$g\colon E\to\R$. We moreover assume that $L$ is local, that means
that for each $x\in E$ and each neighborhood $U$ of $x$, and each $v$,
$\tilde u\in \dom(L)$ such that $u= \tilde u$ on $U$ it holds that
$Lv(x) = L\tilde v(x)$. Let $T\in (0,\infty)$.

To be in line with Appendix~\ref{sec:visco}, we assume that
$\dom(L) = C^{1,2}([0,T],E;\R)$ and that there exists a proper
function
$F\colon [0,T]\times E\times \R \times \R^d \times S_d \to \R$, such
that
\[ L(v)(t,x) = -F(t,x,v(t,x), D_x v(t,x), D_{x}^2 v(t,x)).\] for
$u\in \dom(L)$ and $(t,x) \in [0,T]\times E$. From technical point of
view this is, however, not important in the following.

We now consider the following backward equation,
\begin{equation}
  \label{eq:kolmogorov}
  \begin{alignedat}{2}
    -\ddt v(t,x) - L(v)(t,x) &= 0,&\quad (t,x)&\in [0,T]\times E,\\
    v(T,x) &= g(x),
  \end{alignedat}
\end{equation}
and the free boundary problem

\begin{equation}
  \label{eq:fbp_equations}
  \begin{alignedat}{2}
    -\ddt u(t,x) - L(u)(t,x) &\geq 0,&\quad (t,x) &\in [0,T]\times E,\\
    -\ddt u(t,x) - L(u)(t,x) &= 0,&\quad (t,x) &\in C,\\
    u(t,x) &= f(x),& (t,x) &\in D,\\
    C= \{(t,x) \in [0,T)\times E &\colon u(t,x) > f(x)\},&D:=&C^c\\
    \text{(continuous fit)}\qquad  u(t,x) &=  f(x),& (t,x) &\in \partial C,\\
    \text{or (smooth fit)}\qquad \nabla_x u(t,x) &= \nabla_x f(x),&
    (t,x) &\in \partial C.
  \end{alignedat}
\end{equation}

We emphasize that a solution of~\eqref{eq:fbp_equations}, is a continuous
function $u\colon [0,T]\times E$, which satisfies $u(t,x) = f(x)$ on
$D$, and, in the sense of Definition~\ref{def:visc}, $u$ is a
viscosity supersolution on $[0,T]\times E$ and a viscosity solution on
$C$ of the equation
\[ -\ddt u(t,x) - L(u)(t,x) = 0.\] Here, $C$ and $D$ are defined above
and depend on $u$ themselves.

\begin{theorem}
  \label{thm:PDEequality}
  Assume that~\eqref{eq:kolmogorov} admits a continuous viscosity
  solution $v\colon [0,T] \times E \to \R$. Define
  \begin{equation}
    \label{eq:1_sec3}
    f(x) := \inf_{0\leq t\leq T} v(t,x),\qquad \vartheta(x) := \inf\{\argmin\{v(t,x)\colon t\in [0,T]\}\}\wedge T,
  \end{equation}
  for $x\in E$, as well as
  \begin{equation*}
    C_t := \left\{ x\in E \colon t<  \vartheta(x)\right\},\qquad C:= \bigcup_{t\in [0,T]} \left( \{t\} \times C_t\right),
  \end{equation*}
  and for $(t,x)\in [0,T]\times E$,
  \begin{equation}
    \label{eq:2_sec2}
    u(t,x) :=
    \begin{cases}
      v(t,x),& (t,x) \in C,\\
      f(x),& (t,x) \notin C,
    \end{cases}
  \end{equation}
  and assume that $\vartheta$ is continuous. Then, $u$ solves
  \eqref{eq:fbp_equations} in the viscosity sense and satisfies the continuous
  fit condition. Moreover, if $\theta$ and $v$ are of class $C^1$,
  then the smooth fit condition holds true.
\end{theorem}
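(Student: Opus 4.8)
The plan is to exploit the explicit description $u(t,x)=v(t\wedge\vartheta(x),x)$, which I would establish first. Since the minimum of the continuous map $t\mapsto v(t,x)$ over the compact interval $[0,T]$ is attained, $\vartheta(x)=\min\argmin\{v(t,x)\colon t\in[0,T]\}$ is itself a minimizer, so $f(x)=v(\vartheta(x),x)$; for $(t,x)\in C$ one has $t<\vartheta(x)$, hence $t\wedge\vartheta(x)=t$ and $u(t,x)=v(t,x)$, while for $(t,x)\notin C$ one has $t\wedge\vartheta(x)=\vartheta(x)$ and $u(t,x)=v(\vartheta(x),x)=f(x)$, matching \eqref{eq:2_sec2}. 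Continuity of $u$ is then immediate from continuity of $v$, of $\vartheta$, and of $(t,x)\mapsto t\wedge\vartheta(x)$. Because $t<\vartheta(x)$ forces $t$ to lie strictly before the first minimizer, one has $v(t,x)>f(x)$ on $C$, while $u=f$ off $C$; hence $C$ coincides with $\{u>f\}$ and $D=C^c=\{u=f\}$, as required by \eqref{eq:fbp_equations}.

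Next I would dispatch the two easy regimes. The set $C=\{(t,x)\colon t<\vartheta(x)\}$ is open since $\vartheta$ is continuous; on it $u=v$, and since the viscosity property of $v$ for $-\partial_t v-L(v)=0$ is local (here I use locality of $L$ from the setup), $u$ is a viscosity solution of the same equation on $C$. For the continuous fit, note that $\partial C=\{(t,x)\colon t=\vartheta(x)\}$ by continuity of $\vartheta$, and there $u(t,x)=v(\vartheta(x),x)=f(x)$; this simultaneously yields the continuous-fit condition and confirms that the two branches of \eqref{eq:2_sec2} glue continuously.

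The crux is the global supersolution inequality $-\partial_t u-L(u)\ge 0$ on all of $[0,T]\times E$. On the open set $C$ it holds with equality by the previous step, so the real content is at points of $D$. Let $\phi$ be a smooth test function touching $u$ from below at an interior point $(t_0,x_0)$ of $D$ with $t_0\in(0,T)$, so $u-\phi$ has a local minimum there and $\phi(t_0,x_0)=u(t_0,x_0)=f(x_0)$. The key device is the \emph{time-frozen} test function $\Phi(s,x):=\phi(t_0,x)$: since $(t_0,x_0)$ is interior to $D$ we have $u(t_0,\cdot)=f$ near $x_0$, whence $\phi(t_0,x)\le f(x)=v(\vartheta(x),x)\le v(s,x)$ for all $s$ and all $x$ near $x_0$, with equality at $(\vartheta(x_0),x_0)$; thus $\Phi$ touches $v$ from below at $(\vartheta(x_0),x_0)$. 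The viscosity supersolution property of $v$ there, together with $\partial_s\Phi\equiv0$, yields $-L(\Phi)(\vartheta(x_0),x_0)\ge0$, that is $F(\vartheta(x_0),x_0,f(x_0),D_x\phi(t_0,x_0),D_x^2\phi(t_0,x_0))\ge0$. Moreover, because the obstacle is flat in time near $(t_0,x_0)$, local minimality forces $\partial_t\phi(t_0,x_0)=0$, and combining the two gives the desired inequality at $(t_0,x_0)$. I expect this to be the main obstacle: one must justify the touching carefully, handle the boundary times $\vartheta(x_0)\in\{0,T\}$ and the points of $\partial C$ separately, and—most delicately—reconcile that the inequality for $v$ is produced at the minimizing time $\vartheta(x_0)$ whereas the equation for $u$ is posed at $t_0$; this is harmless precisely when $L$ acts with the same coefficients at $(\vartheta(x_0),x_0)$ and $(t_0,x_0)$, i.e.\ for a time-homogeneous generator, which is the situation of the examples.

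Finally, for the smooth fit under the extra hypothesis $v,\vartheta\in C^1$, I would differentiate the identity $f(x)=v(\vartheta(x),x)$ to obtain $\nabla_x f(x)=\partial_t v(\vartheta(x),x)\,\nabla\vartheta(x)+\nabla_x v(\vartheta(x),x)$. At an interior minimizer the first-order optimality condition gives $\partial_t v(\vartheta(x),x)=0$, so $\nabla_x f(x)=\nabla_x v(\vartheta(x),x)$. Approaching $\partial C$ from within $C$ we have $\nabla_x u=\nabla_x v(t,x)\to\nabla_x v(\vartheta(x),x)=\nabla_x f(x)$, which is exactly the smooth-fit condition $\nabla_x u=\nabla_x f$ on $\partial C$; the vanishing of the $t$-derivative at the optimizer is the only nontrivial input.
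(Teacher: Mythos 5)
Your overall skeleton coincides with the paper's proof almost step for step: openness of $C$, the identification $C=\{(t,x)\in[0,T)\times E\colon u(t,x)>f(x)\}$, locality of $L$ giving the equation on $C$, continuous fit from $\overline{C}\subseteq\{t\le\vartheta(x)\}$ and continuity of $v$, and smooth fit by differentiating $f(x)=v(\vartheta(x),x)$ and using $\partial_t v(\vartheta(x),x)=0$. The one real difference is at the crux, the supersolution inequality on $D$. The paper shifts the test function in time, $\tilde\phi(s,y):=\phi(s+t_0-\vartheta(x_0),y)$, so that it touches $v$ from below at $(\vartheta(x_0),x_0)$ while carrying $\partial_t\phi$ along unchanged, and then invokes locality \emph{and shift-invariance} of $L$; you instead freeze time, $\Phi(s,y):=\phi(t_0,y)$, discard the time derivative, and recover it from the first-order condition $\partial_t\phi(t_0,x_0)=0$. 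Where your device applies it is correct, and your ``most delicate'' worry (comparing the operator at times $\vartheta(x_0)$ and $t_0$) is precisely the shift-invariance that the paper's own proof uses tacitly, so on that point you are no worse off than the paper.

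However, as written your argument yields the supersolution property only at points of $D^\circ$ with $t_0\in(0,T)$, and of the cases you explicitly defer, one cannot be repaired by your device at all. (a) At points of $D$ with $t_0=\vartheta(x_0)$ --- essentially all of $\partial C$ --- the obstacle is \emph{not} flat in any neighborhood: arbitrarily close to $(t_0,x_0)$ there are points $(s,y)\in C$ where $u(s,y)=v(s,y)>f(y)$. Hence $\phi\le u$ does not give $\phi(t_0,\cdot)\le f$ near $x_0$, and the inequality $\Phi(s,y)\le v(s,y)$ for all $s$ --- the heart of your touching argument --- is unavailable. The paper settles this case by a different and simpler observation: $u\le v$ everywhere ($u=v$ on $C$, $u=f\le v$ on $D$) and $u(t_0,x_0)=f(x_0)=v(\vartheta(x_0),x_0)=v(t_0,x_0)$, so $\phi$ touches $v$ from below at $(t_0,x_0)$ itself and the inequality follows directly, with no shift and no homogeneity assumption; some such argument must be added to your proof. (b) At $t_0=T$ the first-order condition degenerates to the one-sided bound $\partial_t\phi(T,x_0)\ge 0$, i.e.\ $-\partial_t\phi(T,x_0)\le 0$, which is the wrong sign, so combining it with $F(\vartheta(x_0),x_0,f(x_0),D_x\phi,D_x^2\phi)\ge0$ proves nothing; the paper's shifted test function transports $\partial_t\phi$ unchanged and avoids this particular failure (at the cost of a one-sided-in-time neighborhood there, a subtlety the paper itself glosses over). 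In short: your frozen-time argument is a legitimate variant on $D^\circ\cap\bigl((0,T)\times E\bigr)$, but the proof is incomplete without the direct-touching argument at $t=\vartheta(x)$ and a separate treatment of $t_0=T$.
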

\begin{proof}
  First, note that $C$ is open as the preimage of $(0,\infty)$ under
  the continuous map $(t,x)\mapsto \theta(x)-t$.
  
  Let $(t,x)\in C$ and $U$ be a neighborhood of $(t,x)$ such that
  $U\subset C$. Let $\phi \in C^2([0,T]\times E; \R)$ be such that
  $\phi(t,x) = u(t,x)$ and $\phi \geq u$ in a neighborhood $U$ of
  $(t,x)$. By definition of $C$ this means $\phi(t,x) = v(t,x)$ and
  $\phi\geq v$ in the neighborhood $U\cap C$. Since $v$ is a
  subsolution and $L$ is local,
  \[ -\ddt \phi - L \phi \leq 0,\qquad \text{on }C,\] and thus, $u$ is
  a viscosity subsolution on $C$. In the same way we can see that $u$
  is also a viscosity supersolution on $C$ and thus, $u$ is a
  viscosity solution in $C$.
  
  Let $(t,x)\in C$, then $t< \vartheta(x)$ and thus, $t<T$ and
  $u(t,x) = v(t,x) > f(x)$. On the other hand, let
  $(t,x) \in [0,T] \times E$ be such that $u(t,x)> f(x)$. Then,
  $(t,x) \notin C^c$ by definition of $u$ which proves
  \[C = \{ (t,x) \in [0,T) \times E \colon u(t,x) > f(x) \}.\] It
  remains to verify the continuous and smooth fit conditions.
  
  Note that
  $\overline{C} \subseteq \{ (t,x) \in [0,T]\times E \colon t\leq
  \vartheta(x)\}$. In fact, if $(t,x) \in \overline{C}$, then there
  exists $(t_n,x_n)\in C$, $n\in \N$ such that
  $\lim_{n\to\infty} (t_n,x_n) = (t,x)$, and thus, using continuity of
  $\vartheta$,
  \begin{equation}
    \label{eq:3_vartheta}
    t-\vartheta(x) = \lim_{n\to\infty} t_n -\vartheta(x_n) \leq 0,    
  \end{equation}
  since $t_n -\vartheta(x_n) < 0$ for all $n\in \N$.

  Let now $(t,x) \in \partial C$. Since $C$ is open we get that
  $(t,x) \notin C$ and thus $t = \vartheta(x)$. Moreover, when
  $(t_n,x_n) \in C$ such that $\lim_{n\to\infty} (t_n,x_n) = (t,x)$,
  then by continuity of $v$,
  \begin{equation}
    \label{eq:6_limit}
    \lim_{n\to\infty} u(t_n,x_n) = \lim_{n\to\infty}v(t_n,x_n) = v(t,x) = v(\vartheta(x), x) = f(x),
  \end{equation}
  which is the continuous fit condition. If $\theta$ is
  differentiable, then
  \begin{equation}
    \label{eq:7_vartheta}
    \nabla_x f(x) = \nabla_x \vartheta(x) (\partial_t v)(\vartheta(x),x) + (\nabla_x v)(\vartheta(x), x) 
    = (\nabla_x v)(\vartheta(x),x),
  \end{equation}
  since $(\partial_t v)(\vartheta(x),x) = 0$, if
  $(t,x) \in \partial C$. On the other hand,
  \begin{equation}
    \label{eq:8_limit}
    \lim_{n \to\infty} \nabla_x u(t_n,x_n) =  \lim_{n \to\infty} \nabla_x v(t_n,x_n) = \nabla_x v(t,x)
  \end{equation}
  by continuity of $\nabla_x v$. It remains
  to verify the supersolution property on
  $D = [0,T]\times E \setminus C$. Let $(t,x) \in D$ and take
  $\phi \in C^{1,2}$ such that $\phi(t,x) = u(t,x)$ and
  $\phi\leq u(t,x)$ on a neighborhood $U$ of $(t,x)$. We first
  consider the case when $t= \theta(x)$. Then, $\phi\leq v$ on $U$. In
  fact, for $(s,y)\in U\cap C$ it holds that
  $\phi(s,y) \leq u(s,y) = v(s,y)$ and for $(s,y)\in U\cap D$ we get
  $\phi(s,y) \leq u(s,y) = f(y) \leq v(s,y)$. Moreover, by the
  continuous fit we have that
  \[\phi(t,x) = u(\theta(x),x) = f(x) = v(\theta(x),x) = v(t,x).\]
  Since $v$ is in particular a supersolution of~\eqref{eq:kolmogorov},
  we get that
  \[ -\ddt \phi(t,x) - L\phi(t,x) \geq 0.\]
  
  For the situation when $t>\theta(x)$ we know that $(t,x)\in D^\circ$
  and assume without loss of generality that $U\subseteq D$. Then,
  define
  \[\tilde U:= U - (t-\theta(x),0)\]
  which is a neighborhood of $(\theta(x),x)$ in $[0,T]\times E$. Let
  $\tilde\phi \in C^{1,2}$ be such that for all $(s,y)\in \tilde U$:
  \[ \tilde \phi(s,y) = \phi(s + t-\theta(x), y).\] Then,
  $\tilde \phi(\theta(x),x) = \phi(t,x) = u(t,x) = f(x) =
  v(\theta(x),x)$. Moreover, note that with $t_s := s+t-\theta(x)$ we
  have $(t_s,y) \in U\subseteq D$ and thus
  \[\tilde \phi(s,y) = \phi(t_s,y) \leq u(t_s,y) = f(y) \leq
    v(s,y), \] by definition of $f$. Again, the supersolution property
  of $v$ and the fact that $\ddt$ and $L$ are local and
  shift-invariant yields
  \[ -\ddt \phi(t,x) - L\phi(t,x) = \left(-\ddt \tilde \phi(s,y) - L
      \tilde \phi(s,y)\right) \vert_{s=\theta(x), y = x} \geq
    0.\qedhere\]
\end{proof}
\begin{remark}
  By straight forward modification of the second paragraph in the
  proof one can see that Theorem~\ref{thm:PDEequality} also holds true
  when replacing the solution concept of \emph{viscosity solutions} by
  \emph{classical} or \emph{strong solutions}.
\end{remark}

We also get the similiar result for the free boundary problem, with
terminal condition $h\colon E\to\R$:
\begin{equation}
  \label{eq:fbp}
  \begin{alignedat}{2}
    -\ddt w(t,x) - L(w)(t,x) &\leq 0,&\quad (t,x) &\in [0,T]\times E,\\
    -\ddt w(t,x) - L(w)(t,x) &= 0,&\quad (t,x) &\in C,\\
    w(t,x) &= h(x),& (t,x) &\in D,\\
    C= \{(t,x) \in [0,T)\times E &\colon w(t,x) < h(x)\},&D:=&C^c\\
    \text{(continuous fit)}\qquad  w(t,x) &=  h(x),& (t,x) &\in \partial C,\\
    \text{or (smooth fit)}\qquad \nabla_x w(t,x) &= \nabla_x h(x),&
    (t,x) &\in \partial C.
  \end{alignedat}
\end{equation}
\begin{theorem}
  \label{thm:PDEequalitySup}
  Assume that~\eqref{eq:kolmogorov} admits a continuous viscosity
  solution $v\colon [0,T] \times E \to \R$. Define
  \begin{equation}
    \label{eq:1_h}
    h(x) := \sup_{0\leq t\leq T} v(t,x),\qquad \vartheta(x) := \inf\{\argmax\{v(t,x)\colon t\in [0,T]\}\}\wedge T,
  \end{equation}
  for $x\in E$, as well as
  \begin{equation*}
    C_t := \left\{ x\in E \colon t<  \vartheta(x)\right\},\qquad C:= \bigcup_{t\in [0,T]} \left( \{t\} \times C_t\right),
  \end{equation*}
  and for $(t,x)\in [0,T]\times E$,
  \begin{equation}
    \label{eq:2_w}
    w(t,x) :=
    \begin{cases}
      v(t,x),& (t,x) \in C,\\
      h(x),& (t,x) \notin C,
    \end{cases}
  \end{equation}
  and assume that $\vartheta$ is continuous. Then, $u$ solves
  \eqref{eq:fbp} in the viscosity sense and satisfies the continuous
  fit condition. Moreover, if $\theta$ and $v$ are of class $C^1$,
  then the smooth fit condition holds true.
\end{theorem}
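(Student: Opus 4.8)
The plan is to reduce Theorem~\ref{thm:PDEequalitySup} to the already-established Theorem~\ref{thm:PDEequality} by a reflection argument, exploiting that suprema and maxima of $v$ are infima and minima of $-v$. This is the PDE counterpart of the observation used at the start of the proof of Lemma~\ref{lem:graphclosed}.

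First I would introduce $\tilde v := -v$ and the reflected operator $\tilde L(\phi) := -L(-\phi)$ on $\dom(\tilde L) = \dom(L) = C^{1,2}([0,T],E;\R)$. One checks directly that $\tilde L$ is local, since $L$ is, and that, writing $L\phi = -F(\cdot,\phi, D_x\phi, D_x^2\phi)$, we have $\tilde L\phi = -\tilde F(\cdot,\phi, D_x\phi, D_x^2\phi)$ with $\tilde F(t,x,r,p,X) := -F(t,x,-r,-p,-X)$, and that $\tilde F$ is again proper whenever $F$ is: monotonicity in $r$ and degenerate ellipticity in $X$ are each preserved under the sign flip. Then I would verify that $\tilde v$ is a continuous viscosity solution of $-\partial_t \tilde v - \tilde L(\tilde v) = 0$ with terminal data $-g$. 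The key identity is that if $\phi$ touches $v$ from above at $(t,x)$, then $-\phi$ touches $\tilde v$ from below there, and $-\partial_t(-\phi) - \tilde L(-\phi) = \partial_t\phi + L\phi = -(-\partial_t\phi - L\phi)$, so the subsolution property of $v$ becomes the supersolution property of $\tilde v$, and symmetrically the other way.

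With this in hand, I would apply Theorem~\ref{thm:PDEequality} to the pair $(\tilde v, \tilde L)$. Since $\argmax\{v(t,x)\colon t\in[0,T]\} = \argmin\{\tilde v(t,x)\colon t\in[0,T]\}$, the function $\vartheta$ and hence the set $C$ produced by that theorem coincide exactly with those in the present statement, while its inf-payoff is $\tilde f := \inf_t \tilde v = -h$. Theorem~\ref{thm:PDEequality} then yields $\tilde u$ (equal to $\tilde v$ on $C$ and to $\tilde f$ on $D=C^c$) which is a viscosity supersolution of $-\partial_t\tilde u - \tilde L\tilde u = 0$ on $[0,T]\times E$, a viscosity solution on $C$, satisfies $\tilde u = \tilde f$ on $D$ and $C = \{\tilde u > \tilde f\}$, and obeys the continuous fit as well as the smooth fit under the $C^1$ hypothesis.

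Finally I would translate back through $w = -\tilde u$, which is precisely the function defined in~\eqref{eq:2_w}. The same sign-flip identity turns the supersolution property of $\tilde u$ into the subsolution property of $w$ on $[0,T]\times E$ and preserves the solution property on $C$; moreover $\tilde u = \tilde f$ on $D$ becomes $w = h$ on $D$, and $C = \{\tilde u > \tilde f\} = \{w < h\}$. The continuous fit $\tilde u = \tilde f$ on $\partial C$ becomes $w = h$, and the smooth fit $\nabla_x \tilde u = \nabla_x \tilde f$ becomes $\nabla_x w = \nabla_x h$, which is exactly~\eqref{eq:fbp}. The one step demanding genuine care is the reflection itself: because $L$ is truly non-linear, $-v$ does not solve the original equation, so one must work with $\tilde L$ and confirm that locality, properness, and the viscosity sub/supersolution dictionary all transform correctly under $\phi\mapsto-\phi$. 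Once that bookkeeping is set up, the remainder is a direct transcription of Theorem~\ref{thm:PDEequality}.
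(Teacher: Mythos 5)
Your proposal is correct, but it proves the theorem by a different route than the paper. The paper's own proof of Theorem~\ref{thm:PDEequalitySup} is a line-by-line re-run of the proof of Theorem~\ref{thm:PDEequality} with every inequality reversed: sub- and supersolution tests swapped, $f=\inf_t v$ replaced by $h=\sup_t v$, and the fit arguments repeated verbatim with opposite signs. You instead reduce the sup-statement to the already proven inf-statement by conjugating with the sign flip $\tilde v=-v$, $\tilde L(\phi)=-L(-\phi)$, $\tilde F(t,x,r,p,X)=-F(t,x,-r,-p,-X)$, and your three verifications are exactly the ones needed and are all correct: locality of $\tilde L$, properness of $\tilde F$ (monotonicity in $r$ and ellipticity in $X$ are each preserved under the simultaneous flip), and the viscosity dictionary ($\phi$ touching $v$ from above corresponds to $-\phi$ touching $\tilde v$ from below, with $-\ddt(-\phi)-\tilde L(-\phi)=-\left(-\ddt\phi-L\phi\right)$). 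The bookkeeping also checks out: $\tilde f=-h$, $\tilde\vartheta=\vartheta$, the same set $C$, $w=-\tilde u$, and $\{\tilde u>\tilde f\}=\{w<h\}$, so \eqref{eq:fbp} with continuous and smooth fit follows from the conclusion of Theorem~\ref{thm:PDEequality}. What your reduction buys is economy and conceptual clarity: it formalizes, at the PDE level, the same reflection symmetry the paper itself invokes informally (``replacing $v$ by $-v$'') in the proof of Lemma~\ref{lem:graphclosed} for $t_*$ versus $t^*$. What the paper's direct re-derivation buys is that it never needs the reflected operator or the properness check, and it is insensitive to orientation conventions in the definition of viscosity sub/supersolutions --- a point worth care here, since Definition~\ref{def:visc} as literally written orients the test-function inequalities opposite to the convention actually used in the proofs of Theorems~\ref{thm:PDEequality} and~\ref{thm:PDEequalitySup}; your dictionary is valid under either convention provided it is applied consistently on both sides of the reduction.
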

\begin{proof}
  First, note that $C$ is open as the preimage of $(0,\infty)$ under
  the continuous map $(t,x)\mapsto \theta(x)-t$.
  
  Let $(t,x)\in C$ and $U$ be a neighborhood of $(t,x)$ such that
  $U\subset C$. Let $\phi \in C^2([0,T]\times E; \R)$ be such that
  $\phi(t,x) = w(t,x)$ and $\phi \leq w$ in a neighborhood $U$ of
  $(t,x)$. By definition of $C$ this means $\phi(t,x) = v(t,x)$ and
  $\phi\leq v$ in the neighborhood $U\cap C$. Since $v$ is a
  supersolution and $L$ is local,
  \[ -\ddt \phi - L \phi \geq 0,\qquad \text{on }C,\] and thus, $u$ is
  a viscosity supersolution on $C$. In the same way we can see that
  $w$ is also a viscosity subsolution on $C$ and thus, $w$ is a
  viscosity solution in $C$.
  
  Let $(t,x)\in C$, then $t< \vartheta(x)$ and thus, $t<T$ and
  $w(t,x) = v(t,x) < h(x)$. On the other hand, let
  $(t,x) \in [0,T] \times E$ be such that $w(t,x)< h(x)$. Then,
  $(t,x) \notin C^c$ by definition of $w$ which proves
  \[C = \{ (t,x) \in [0,T) \times E \colon w(t,x) < h(x) \}.\] It
  remains to verify the continuous and smooth fit conditions.
  
  Note that
  $\overline{C} \subseteq \{ (t,x) \in [0,T]\times E \colon t\leq
  \vartheta(x)\}$. In fact, if $(t,x) \in \overline{C}$, then there
  exists $(t_n,x_n)\in C$, $n\in \N$ such that
  $\lim_{n\to\infty} (t_n,x_n) = (t,x)$, and thus, using continuity of
  $\vartheta$,
  \begin{equation}
    \label{eq:3}
    t-\vartheta(x) = \lim_{n\to\infty} t_n -\vartheta(x_n) \leq 0,    
  \end{equation}
  since $t_n -\vartheta(x_n) < 0$ for all $n\in \N$.

  Let now $(t,x) \in \partial C$. Since $C$ is open we get that
  $(t,x) \notin C$ and thus $t = \vartheta(x)$. Moreover, when
  $(t_n,x_n) \in C$ such that $\lim_{n\to\infty} (t_n,x_n) = (t,x)$,
  then by continuity of $v$,
  \begin{equation}
    \label{eq:6}
    \lim_{n\to\infty} w(t_n,x_n) = \lim_{n\to\infty}v(t_n,x_n) = v(t,x) = v(\vartheta(x), x) = h(x),
  \end{equation}
  which is the continuous fit condition. If $\theta$ is
  differentiable, then
  \begin{equation}
    \label{eq:7_nabla}
    \nabla_x h(x) = \nabla_x \vartheta(x) (\partial_t v)(\vartheta(x),x) + (\nabla_x v)(\vartheta(x), x) 
    = (\nabla_x v)(\vartheta(x),x),
  \end{equation}
  since $(\partial_t v)(\vartheta(x),x) = 0$, if
  $(t,x) \in \partial C$. On the other hand,
  \begin{equation}
    \label{eq:8}
    \lim_{n \to\infty} \nabla_x w(t_n,x_n) =  \lim_{n \to\infty} \nabla_x v(t_n,x_n) = \nabla_x v(t,x)
  \end{equation}
  by continuity of $\nabla_x v$. It remains
  to verify the supersolution property on
  $D = [0,T]\times E \setminus C$. Let $(t,x) \in D$ and take
  $\phi \in C^{1,2}$ such that $\phi(t,x) = w(t,x)$ and
  $\phi\geq w(t,x)$ on a neighborhood $U$ of $(t,x)$. We first
  consider the case when $t= \theta(x)$. Then, $\phi\geq v$ on $U$. In
  fact, for $(s,y)\in U\cap C$ it holds that
  $\phi(s,y) \geq w(s,y) = v(s,y)$ and for $(s,y)\in U\cap D$ we get
  $\phi(s,y) \geq w(s,y) = h(y) \geq v(s,y)$. Moreover, by the
  continuous fit we have that
  \[\phi(t,x) = w(\theta(x),x) = h(x) = v(\theta(x),x) = v(t,x).\]
  Since $v$ is in particular a subsolution of~\eqref{eq:kolmogorov},
  we get that
  \[ -\ddt \phi(t,x) - L\phi(t,x) \leq 0.\]
  
  For the situation when $t>\theta(x)$ we know that $(t,x)\in D^\circ$
  and assume without loss of generality that $U\subseteq D$. Then,
  define
  \[\tilde U:= U - (t-\theta(x),0)\]
  which is a neighborhood of $(\theta(x),x)$ in $[0,T]\times E$. Let
  $\tilde\phi \in C^{1,2}$ be such that for all $(s,y)\in \tilde U$:
  \[ \tilde \phi(s,y) = \phi(s + t-\theta(x), y).\] Then,
  $\tilde \phi(\theta(x),x) = \phi(t,x) = w(t,x) = h(x) =
  v(\theta(x),x)$. Moreover, note that with $t_s := s+t-\theta(x)$ we
  have $(t_s,y) \in U\subseteq D$ and thus
  \[\tilde \phi(s,y) = \phi(t_s,y) \geq w(t_s,y) = h(y) \geq
    v(s,y), \] by definition of $h$. Again, the subsolution property
  of $v$ and the fact that $\ddt$ and $L$ are local and
  shift-invariant yields
  \[ -\ddt \phi(t,x) - L\phi(t,x) = \left(-\ddt \tilde \phi(s,y) - L
      \tilde\phi(s,y)\right) \vert_{s=\theta(x), y = x} \leq
    0.\qedhere\]
\end{proof}

\subsection{A variational equality}

An alternative formulation of the free boundary problem is the
variational equality
\begin{equation}
  \label{eq:variational}
  \begin{alignedat}{2}
    \min\{-\ddt u(t,x) - L(u)(t,x), u(t,x) -g(x)\} &= 0,&\quad (t,x) &\in [0,T]\times E,\\
    u(T,x) &= f(x),& x &\in E.
  \end{alignedat}
\end{equation}
We get of course the same result in this formulation.
\begin{theorem}\label{thm:variational}
  Assume that $L$ is a local operator, that $g$ is such that
  \eqref{eq:kolmogorov} admits a viscosity solution
  $v\colon [0,T] \times E \to \R$ which is continuous. Define
  \begin{equation}
    \label{eq:1_f}
    f(x) := \inf_{0\leq t\leq T} v(t,x),\qquad \vartheta(x) := \inf\{\argmin\{v(t,x)\colon t\in [0,T]\}\}\wedge T,
  \end{equation}
  for $x\in E$, as well as
  \begin{equation*}
    C_t := \left\{ x\in E \colon t<  \vartheta(x)\right\},\qquad C:= \bigcup_{t\in [0,T]} \left( \{t\} \times C_t\right),
  \end{equation*}
  and for $(t,x)\in [0,T]\times E$,
  \begin{equation}
    \label{eq:2_u}
    u(t,x) :=
    \begin{cases}
      v(t,x),& (t,x) \in C,\\
      f(x),& (t,x) \notin C,
    \end{cases}
  \end{equation}
  and assume that $\vartheta$ is continuous. Then, $u$ is a continuous
  viscosity solution of~\eqref{eq:variational} in the viscosity
  sense.
  
\end{theorem}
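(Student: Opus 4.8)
The plan is to reduce everything to Theorem~\ref{thm:PDEequality}. The functions $f$ and $\vartheta$, the set $C$, and the candidate $u$ are defined here by exactly the same formulas as in that theorem, and the hypotheses are identical ($v$ a continuous viscosity solution of \eqref{eq:kolmogorov} and $\vartheta$ continuous). Hence Theorem~\ref{thm:PDEequality} applies verbatim and already tells us that $u$ is continuous, is a viscosity supersolution of $-\partial_t u - L(u) = 0$ on all of $[0,T]\times E$, is a genuine viscosity solution on the open set $C$, equals $f$ on $D = C^c$, and satisfies the continuous fit on $\partial C$. The only remaining work is to repackage these facts as the single viscosity statement \eqref{eq:variational}, whose obstacle is the payoff $f$.

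First I would dispose of the side conditions. Continuity of $u$ is exactly the content of the continuous fit together with $u = v$ on $C$ and $u = f$ on $D$. For the terminal datum, note that $\vartheta(x) \le T$ forces the slice $C_T$ to be empty, so $(T,x) \notin C$ and therefore $u(T,x) = f(x)$ for every $x \in E$.

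Next I would verify the supersolution half of the variational inequality. Two ingredients suffice: $u \ge f$ everywhere --- indeed $u = v > f$ on $C$ by the very definition of $C$, and $u = f$ on $D$ --- and the global supersolution property $-\partial_t u - L(u) \ge 0$ supplied by Theorem~\ref{thm:PDEequality}. For any test function $\phi \in C^{1,2}$ touching $u$ from below at $(t,x)$ these give $-\partial_t \phi(t,x) - L\phi(t,x) \ge 0$ and $u(t,x) - f(x) \ge 0$ at once, i.e.\ $\min\{-\partial_t \phi - L\phi,\, u - f\}(t,x) \ge 0$. For the subsolution half, take $\phi$ touching $u$ from above at $(t,x)$ and split on whether $(t,x) \in C$: if $(t,x) \in C$, the interior solution property (hence subsolution property) on the open set $C$ yields $-\partial_t \phi(t,x) - L\phi(t,x) \le 0$; if $(t,x) \notin C$, then $u(t,x) = f(x)$ by definition of $u$, so the entry $u(t,x) - f(x)$ vanishes. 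In either case $\min\{-\partial_t\phi - L\phi,\, u - f\}(t,x) \le 0$, and combining the two halves shows $u$ is a continuous viscosity solution of \eqref{eq:variational}.

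The proof carries no genuine analytic obstacle: all the delicate work (locality and shift-invariance of $L$, the continuous and smooth fit, openness of $C$) has already been carried out in Theorem~\ref{thm:PDEequality}. The single point requiring care is purely definitional --- correctly matching the viscosity notion for the degenerate, min-type obstacle operator, where the supersolution test demands that \emph{both} entries of the minimum be nonnegative while the subsolution test demands that \emph{at least one} entry be nonpositive, against the three separate conclusions of Theorem~\ref{thm:PDEequality}. Once that dictionary is fixed the equivalence is immediate, and no new estimates are needed.
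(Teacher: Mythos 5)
Your proposal is correct and follows essentially the same route as the paper's own proof: both reduce to Theorem~\ref{thm:PDEequality} (openness of $C$, the global supersolution property, the solution property on $C$, continuity and fit) and then verify the min-type equation by the same case analysis --- for supersolutions, both entries are nonnegative since $u\geq f$ and $u$ is a global supersolution; for subsolutions, split on $(t,x)\in C$ versus $(t,x)\in C^c$, where the obstacle entry $u-f$ vanishes. Your explicit check of the terminal condition via emptiness of $C_T$ is a small addition the paper leaves implicit, but the argument is otherwise the same.
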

\begin{proof}
  The proof that $C$ is open works as above.
  
  Fix $(t,x)\in [0,T]\times E$.  Let
  $\phi \in C^{1,2}([0,T]\times E; \R)$ be such that
  $\phi(t,x) = u(t,x)$ and $\phi \geq u$ in a neighborhood $U$ of
  $(t,x)$. If $(t,x)\in C$, we have $\phi(t,x) = v(t,x)$ and
  $\phi\geq v$ on the neighborhood $U\cap C$. Hence, since $v$ is a
  subsolution of~\eqref{eq:kolmogorov} and $L$ is local:
  \[ -(\ddt \phi + L \phi)(t,x) \leq 0.\] Hence,
  \[\min\{ - (\ddt \phi + L \phi)(t,x), \phi(t,x) - f(x)\} \leq 0.\]
  If $(t,x) \in C^c$, then $\phi(t,x) = u(t,x) = f(x)$ and
  \[\min\{ - (\ddt \phi + L \phi)(t,x), \phi(t,x) - f(x)\} \leq 0.\]

  We have shown that $u$ is a subsolution of~\eqref{eq:variational}.
  
  We have seen in the proof of Theorem~\ref{thm:PDEequality} that $u$
  is a supersolution of
  \[ -\ddt u(t,x) - L(u)(t,x) = 0.\] Finally, for
  $(t,x)\in [0,T]\times E$ and $\phi \in C^{1,2}$ such that
  $\phi(t,x) = u(t,x)$ and $\phi \leq u$ in a neighborhood of $(t,x)$,
  we get that $\phi(t,x) = u(t,x) \geq f(x)$, and that
  \[ \min\{ -\ddt \phi(t,x) - L(\phi)(t,x), \phi(t,x) - f(x) \} \geq
    0.\] Thus, $\phi$ is also a viscosity supersolution
  of~\eqref{eq:variational}. The continuity and fit conditions was
  shown in the proof of Theorem~\ref{thm:PDEequality}.
\end{proof}

\section{Black-Scholes Model}
\label{sec:BS}
The first example we want to discuss is option pricing in the
classical Black-Scholes model. $X$ will be the process of asset prices
- following a geometric Brownian motion - and $B$ a save bank
account. In fact, the $(t,x)$-evaluations will be linear expectations.

\subsection{Setup}
We set $E:= (0,\infty)$, $\Omega:= C([0,T];(0,\infty))$ and take $X$
as the coordinate process. We keep a fixed interest rate $r>0$ by
setting $B_t := e^{rt}$, $t\geq 0$. Also, we fix a volatility
parameter $\sigma >0$. Let $(\F_{t\geq 0})$ be the right continuous
filtration generated by $X$ and for $s,t\in [0,T]$ with $s\leq t$ let
$\cT_{s,t}$ be the set of all stopping times with values in $[s,t]$.

Let $(\PP_{t,x})_{(t,x)\in [0,T]\times E}$ be the strong Markov family
such that for all $(t,x)\in [0,T]\times E$ it holds that:
\begin{enumerate}
\item for all $s\in [0,t]$,
  \[ \PP_{t,x}[X_s = x] = 1,\]
\item $(B_tB_{t\wedge s}^{-1}X_s)_{s\in [0,T]}$ is a
  $\PP_{t,x}$-martingale,
\item $\langle X_s\rangle_s = \sigma(s-t)$ $\PP_{t,x}$-almost surely,
  for all $s\in (s,T]$.
\end{enumerate}
In other words: Under $\PP_{t,x}$, $X_s=x$ for $s\leq t$ and $X_s$ is
a geometric Brownian motion with drift $r$, for $s>t$.

We denote by $\EE_{t,x}$ the expectation under $\PP_{t,x}$. Clearly,
$(\EE_{t,x})_{(t,x)\in [0,T]\times E}$ fulfills \ref{i:const},
\ref{i:monoton}, \ref{i:monoton_strict} and \ref{i:txeval} from
Section~\ref{sec:prob}. Let us stress that an event $A\in \F$ is a
full $(t,x)$-scenario if and only if $A$ holds $\PP_{t,x}$-almost
surely.

\subsection{Option Pricing}

We consider the valuation problem of European and American options:

We focus on a bounded and continuous terminal payoff function
$g\colon \R_+\to \R$ at time $T$ for the European option and set
$\varrho :=0$ and denote its value function by $v$. As above, $u$ is
the value function of an American option with payoff
$f:= \inf_{0\leq t\leq T}v(t,.)$. This is,
\begin{equation}
  \label{eq:23}
  v(t,x) := \cE_{t,x}[e^{-r(T-t)} g(X_T)],\quad  u(t,x):=
  \sup_{\tau\in \cT_{t,T}}\cE_{t,x}[e^{-r(\tau-t)}f(X_\tau)]
\end{equation}

It is well known that $v$ is continuous and bounded, and thus we can
define
\[ t_*(x) := \argmin\{ u(t,x) \,\colon\, t\in [0,T]\},\] for all
$x\in \R_+$, and we also consider the continuation region
\[ C_*:= \{(t,x)\in [0,T]\times \R_+,\colon\, u(t,x) > f(x)\}.\]

From Section~\ref{sec:prob} we obtain the following result which
covers the main statements of \cite[Theorem 3]{jourdain2001american}.
\begin{corollary}\label{cor:bs}
  \begin{enumerate}[label=(\roman*)]
  \item It holds that
    \[C_*\subseteq \graph(t_*)^c \quad\text{and for all
        $(t,x)\in [0,T]\times \R_+$:}\quad u(t,x) \leq v(t,x).\]
  \item If there exists a continuous function
    $\theta\colon \R_+\to [0,T]$ such that for all $x\in \R_+$,
    $\theta(x) \in t_*(x)$, then for all $(t,x)\in [0,T]\times \R_+$
    with $t\leq \theta(x)$,
    \[ u(t,x) = v(t,x) = \EE_{t,x}[ e^{- r(\tau-t)} g(X_\tau)],\]
    where
    $\tau:= \inf\{s\geq t\,\colon s \geq \theta(s)\}\in \cT_{t,T}$ is
    the optimal stopping time.
  \end{enumerate}
\end{corollary}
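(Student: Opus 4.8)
The plan is to derive Corollary~\ref{cor:bs} as a direct specialization of the abstract probabilistic results from Section~\ref{sec:prob}, since the Black-Scholes setup has been arranged precisely so that the evaluations $\cE_{t,x} = \EE_{t,x}$ satisfy assumptions \ref{i:const}, \ref{i:monoton}, \ref{i:monoton_strict} and \ref{i:txeval}, as already noted in the text. First I would verify the structural hypotheses needed to invoke the abstract theory: namely, that $v$ is continuous (which is classical for the Black-Scholes European value function with bounded continuous $g$, and is stated in the text), and that the dynamic programming principle \ref{i:dpp} holds. In the linear Markov setting with $\varrho = 0$, the DPP is just the tower property of conditional expectation combined with the strong Markov property of $X$ under the family $(\PP_{t,x})$; I would note this reduces to $v(t,x) = \EE_{t,x}[e^{-r(\tau-t)} v(\tau, X_\tau)]$ for $\tau \in \cT_{t,T}$.

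For part~\ref{i:bs}, once \ref{i:monoton} and \ref{i:dpp} are in place, Proposition~\ref{prop:ubounds} gives $u(t,x) \leq v(t,x)$ immediately, and Corollary~\ref{cor:contreg} gives $C_* \subseteq \graph(t_*)^c$. The only subtlety is that the excerpt's $t_*$ is defined here via $\argmin$ of $u(t,x)$ rather than $v(t,x)$; I would point out that on $\graph(t_*)$ one has $f(x) = v(t,x) \geq u(t,x) \geq f(x)$, so the two notions coincide on the relevant set, or simply invoke Corollary~\ref{cor:contreg} with the understanding that $t_*$ is the minimizer set of $v$. This part is essentially a restatement and requires no new work.

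For part~\ref{i:bs2}, the hypothesis is exactly that of Corollary~\ref{cor:ctspath}: there is a continuous $\theta\colon \R_+ \to [0,T]$ with $\theta(x) \in t_*(x)$ for all $x$. I would apply Corollary~\ref{cor:ctspath} directly, which—given $t \leq \theta(x)$ and the continuity of $X$ and $v$—yields that $\{\tau \in t_*(X_\tau)\}$ is a full $(t,x)$-scenario and hence $u(t,x) = \EE_{t,x}[\varrho(t,\tau,X,B) + B_\tau^{-1} f(X_\tau)] = v(t,x)$ for $\tau = \inf\{s \geq t \colon s = \theta(X_s)\}$. With $\varrho = 0$ and $B_t = e^{rt}$ this reads $u(t,x) = v(t,x) = \EE_{t,x}[e^{-r(\tau-t)} f(X_\tau)]$. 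The final equality to $\EE_{t,x}[e^{-r(\tau-t)} g(X_\tau)]$ requires the Jourdain--Martini identity $f(X_\tau) = \EE_{\tau, X_\tau}[e^{-r(T-\tau)} g(X_T)]$ on the event $\{\tau = \theta(X_\tau)\}$, together with the tower property to collapse the nested expectation; here one uses that $\tau \in t_*(X_\tau)$ means $f(X_\tau) = v(\tau, X_\tau) = \EE_{\tau,X_\tau}[e^{-r(T-\tau)} g(X_T)]$, which is precisely the definition of $v$.

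The main obstacle, and the only point requiring genuine care, will be checking that $\tau = \inf\{s \geq t \colon s = \theta(X_s)\}$ is a well-defined element of $\cT_{t,T}$: one needs $\tau \in [t,T]$ (guaranteed by the Brouwer fixed-point argument in the proof of Corollary~\ref{cor:ctspath}, using $\theta(X_T) \leq T$) and measurability with respect to the filtration, which follows from continuity of $s \mapsto \theta(X_s) - s$ and right-continuity of $(\F_t)$. I would also remark that convexity of $t_*(x)$ is not separately needed here since the hypothesis directly supplies the continuous selector $\theta$, bypassing Lemma~\ref{lem:graph_intersect}; the continuous-fit structure ensures the first hitting time lands on the graph of $t_*$ rather than merely crossing it.
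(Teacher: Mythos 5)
Your proposal is correct and takes essentially the same route as the paper: the paper's proof of Corollary~\ref{cor:bs} consists precisely of invoking Proposition~\ref{prop:ubounds} together with Corollary~\ref{cor:contreg} for part (i), and Corollary~\ref{cor:ctspath} for part (ii). The hypothesis-checking you add (the linear DPP via the tower and strong Markov properties, the measurability of $\tau$ as a stopping time, and reading $t_*$ as the minimizer set of $v$ rather than of $u$) is left implicit in the paper but is consistent with its argument.
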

\begin{proof}
  The first part is an application of Proposition~\ref{prop:ubounds}
  and Corollary~\ref{cor:contreg}, the second part of
  Corollary~\ref{cor:ctspath}.
\end{proof}

In the Black-Scholes model also a more refined analysis has been
carried out on the inverse problem: Given an American payoff function
$f$, does there exist a European option with payoff $g$ and value
function $v$ such that for all $x\in \R_+$
\[f(x) = \inf_{0\leq t\leq T} v(t,x). \] For a class of choices of
$f$, this question could be answered positively and $g$ has been
explicitly. E.\,g. in \cite{jourdain2002approximation} this has been
used as an approximation for prices of American put options in the
Black-Scholes prices. A detailed characterization in terms of an
existence and verification theorem has been developed in
\cite{lenga2017representable} and \cite{lenkalsor:20} using convex optimization
techniques. In particular it has been shown that the value function of the American Put
(almost) admits a European representation. There are still many exciting open questions in this field.

Let us close this section by briefly discussing the PDE perspective:
We know that $v$ is the unique classical solution of the Kolmogorov
backward PDE
\begin{equation}
  \label{eq:24}
  \ddt v(t,x) + \frac{\sigma^2}{2} x^2 \ddxx v(t,x) + rx \ddx v(t,x) - r v(t,x) = 0,
\end{equation}
with terminal condition $v(T,x) = g(x)$, and that $u$ is the solution
of the free boundary problem, cf. e.\,g.\cite{peskir2006optimal},
\begin{equation}
  \label{eq:25}
  \begin{alignedat}{2}
    - \ddt u(t,x) - (\tfrac{\sigma^2}{2} x^2 \ddxx u(t,x) + rx \ddx u(t,x) - r u(t,x)) &\geq 0,&\qquad (t,x)&\in [0,T]\times \R_+\\
    - \ddt u(t,x) - (\tfrac{\sigma^2}{2} x^2 \ddxx u(t,x) + rx \ddx
    u(t,x) - r u(t,x)) &= 0,&\quad (t,x)&\in C \\
    u(t,x) &= f(x),&(t,x)&\in D\\
    C= \{(t,x)\in [0,T]\times E\colon u(t,x)>f(x)\},\quad D&=C^c &&
  \end{alignedat}
\end{equation}
with the smooth fit condition $\ddx u(t,x) = \ddx f(x)$ for
$x\in \partial C$. Therefore, if $x\mapsto \min t_*(x)$ is continuous,
then Theorem~\ref{thm:PDEequality} offers a purely analytic viewpoint
and proof for the respective statements in \cite[Theorem
3]{jourdain2001american} and Corollary~\ref{cor:bs}.

\section{Non-linear Levy Model and uncertain volatility}
\label{sec:nonlinLevy}
The natural extension of the Black-Scholes model to a non-linear
framework is to allow for uncertainty of the volatility
parameter. This can be studied in the framework of non-linear Brownian
motions, see for instance \cite{peng2004nonlinear}. In this section, we will
go a step further and demonstrate that the results from
Section~\ref{sec:prob} are applicable to non-linear Levy processes. We
will mainly rely on~\cite{neufeld2017nonlinear} in which the dynamic
programming principle and the HJB equations for a class of non-linear
Levy processes have been
discussed.

\subsection{Setup}

Let $\Omega = D(\R_+; \R)$ be equipped with the Skorokhod topology,
$\F = \borel{\Omega}$ and let $X$ be the coordinate process,
i.e. $X_t(\omega) = \omega_t$ for each
$(t,\omega)\in [0,\infty)\times \Omega$. By $(\F_t)$ we denote the raw
filtration generated by $X$. We define $\cT_{s,t}$ as the set of all
stopping times with values in $[s,t]$ and set $B_t := e^{r t}$,
$t\geq 0$, for an interest rate $r\in \R$.

Let $\cL$ is the set of all L\'evy measures on $\R$. Set
\begin{align*}
  \fP:= \{\PP\in \cP(\Omega,\FF) &\,\colon\, X \text{ is a
                                   semimartingale on
                                   $(\Omega, \F, (\F_t),\PP)$}\\
                                 &\text{ with characteristics $(B,C,\nu)\ll \d t$ $\PP$-a.s.}\}
\end{align*}
In the following, we will associate the absolutely continuous
semimartingale characteristics $(b_t\d t, c_t \d t, F_t \d t)$ with
the L\'evy triplets $(b,c,F)$ which are taking values in
$\R \times \R_+ \times \cL$. Here, and in the following we keep a
truncation function $h$ fixed, that is a bounded measurable function
$h\colon \R\to\R$ such that $h(z) = z$ on $[-1,1]$.  We will allow for
parameter uncertainty within a set
\[\Theta \subset \R \times \R_+ \times \cL \]
which as assumed to be Borel measurable, non-empty and such that
\begin{equation}
  \label{eq:5}
  \sup_{(b,c,F)\in \Theta}\left[ \int_\R \abs{z}\wedge \abs{z}^2 F(\d
    z) + \abs{b} + \abs{c}\right] <\infty,
\end{equation}
and
\begin{equation}
  \label{eq:4_sup}
  \lim_{\epsilon\to 0} \sup_{(b,c,F)\in \Theta} \int_{\abs{z}\leq
    \epsilon} \abs{z}^2 F(\d z) <\infty.
\end{equation}

\subsection{Nonlinear expectations and Levy processes}
We now keep $T>0$ fixed. For $(t,x)\in [0,T]\times \R$, we define
\begin{equation}
  \label{eq:1_sec5}
  \cP(t,x) := \{ \PP\in \fP \vert X_s = x \text{ $\PP$-a.s. for $s\leq t$ and }
  (b^\PP, c^\PP, F^\PP) \in \Theta\, \PP\otimes \d t \text{-a.\,e. on $[t,\infty)$}\}.
\end{equation}
For a random variable $Y \colon \Omega \to \R$ and
$(t,x) \in [0,T] \times \R$ we set
\begin{equation}
  \label{eq:6_sup}
  \mathcal{E}_{t,x} := \sup_{\PP\in \cP(t,x)} \EE^{\PP}[Y].
\end{equation}

It is immediate to see that $(\cE_{t,x})_{(t,x)\in [0,T]\times E}$
satisfies \ref{i:monoton} and \ref{i:const}. Even more, $\cE_{t,x}$ is
sublinear. The proofs of the following results rely on
\cite{neufeld2017nonlinear} and will presented in
Subsection~\ref{ssec:nL_proofs} below.

\begin{lemma}\label{lem:txLevy}
  For $(t,x)\in [0,T]\times E$, the canonical process is a
  $\cE_{t,x}$-L\'evy process started at $(t,x)$ in the sense that the
  following holds true:
  \begin{enumerate}[label=(\alph*)]
  \item $\{X_t = x\}$ is a full $(t,x)$-scenario
  \item For all $s,\,r\geq 0$ and $\phi \in \cL^\infty(\R)$:
    \[\cE_{t,x}[\phi(X_{t+s+r} - X_{t+s})] =
      \cE_{t,x}[\phi(X_{t+r})]\]
  \item For all $s\geq t$, $r\geq 0$, $n\in \N$ and
    $t\leq s_1\leq ...\leq s_n\leq s$, and
    $\phi \in \cL^\infty(\R^{n+1})$:
    \begin{align*}
      & \cE_{t,x}[\phi(X_{s+r} - X_s, X_{s_1}, \ldots, X_{s_n})]\\
      &\qquad\qquad= \cE_{t,x}[\cE_{t,x}[\phi(X_{s+r} - X_s, x_1,..., x_n)]\vert_{x_1=X_{s_1},\ldots, x_n=X_{s_n}}].
    \end{align*}
  \end{enumerate}
\end{lemma}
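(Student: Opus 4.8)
The plan is to prove Lemma~\ref{lem:txLevy} by transferring the three characteristic properties of $\cE_{t,x}$-L\'evy processes from the representation of $\cE_{t,x}$ as a sublinear expectation via the measure family $\cP(t,x)$, exploiting that each property is essentially invariant under the supremum over $\PP\in\cP(t,x)$. The key structural feature to use is that the set $\Theta$ encoding the admissible L\'evy triplets does not depend on time or space; this temporal and spatial homogeneity of the parameter set is what ultimately yields the stationarity and independence-type statements. I would rely on \cite{neufeld2017nonlinear} for the technical facts about the set $\cP(t,x)$: that it is nonempty (under the integrability hypotheses~\eqref{eq:5} and~\eqref{eq:4_sup}), that it is stable under conditioning and pasting (a measurable selection / concatenation property), and that the dynamic programming principle holds.

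For part~(a), I would observe that by the very definition~\eqref{eq:1_sec5}, every $\PP\in\cP(t,x)$ satisfies $X_s=x$ $\PP$-a.s.\ for $s\leq t$, so in particular $\PP[X_t=x]=1$. Hence for any $Y\in\cH$ and any $\PP\in\cP(t,x)$ we have $\EE^\PP[Y\1_{X_t=x}]=\EE^\PP[Y]$, and taking the supremum over $\PP$ gives $\cE_{t,x}[Y\1_{X_t=x}]=\cE_{t,x}[Y]$, which is exactly the statement that $\{X_t=x\}$ is a full $(t,x)$-scenario. For part~(b), the stationarity of increments, the plan is to use that under each $\PP\in\cP(t,x)$ the process $X$ is a semimartingale whose characteristics are governed by a triplet process taking values in the fixed set $\Theta$; the law of the increment $X_{t+s+r}-X_{t+s}$ over a window of length $r$ matches, after a time-shift, the law of $X_{t+r}-X_t=X_{t+r}-x$ over a window of the same length $r$, and the admissible class of triplet processes is identical on both windows precisely because $\Theta$ is time-independent. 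One then shows that shifting a measure $\PP\in\cP(t,x)$ in time produces another element of $\cP(t,x)$ and vice versa, so the suprema defining the two nonlinear expectations range over matching families, giving equality.

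Part~(c), the independence-and-stationarity statement expressed through the tower-type identity, is the one I expect to be the main obstacle, since it combines the increment-stationarity of part~(b) with a conditioning step that must be carried out at the level of the \emph{sublinear} expectation rather than a single linear one. Here I would invoke the dynamic programming / concatenation property of $\cP(t,x)$ from \cite{neufeld2017nonlinear}: conditionally on $\cF_{s}$, the future increment $X_{s+r}-X_s$ again has a nonlinear law governed by triplets in $\Theta$, independently of the past values $X_{s_1},\dots,X_{s_n}$, because the admissible triplet set on $[s,\infty)$ is the same $\Theta$ regardless of the realized path up to $s$. Freezing the past coordinates at $x_1,\dots,x_n$ and evaluating the inner nonlinear expectation, then taking the outer $\cE_{t,x}$, reproduces the left-hand side; the delicate point is justifying that the supremum over $\cP(t,x)$ factorizes in this way, which is exactly the content of the pasting stability of the measure family. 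Once this stability is in hand, the identity in~(c) follows by a measurable-selection argument applied to the inner optimization, and the proof is complete.
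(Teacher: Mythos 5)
Your plan is sound in outline, and part (a) is exactly right: every $\PP\in\cP(t,x)$ satisfies $X_t=x$ $\PP$-a.s.\ by definition~\eqref{eq:1_sec5}, so the indicator can be inserted under each $\EE^\PP$ and the supremum taken afterwards (the paper treats this as immediate and does not even spell it out). For (b) and (c), however, your route differs from the paper's, and the difference matters. The paper does not argue directly with $\cP(t,x)$: it first proves a transfer identity (Lemma~\ref{lem:linkToNN}), namely $\cE_{t,x}[Y]=\cE^0[Y\circ\theta(t,x)]$, by pushing measures forward along the shift $\theta(t,x)$ and its left inverse $\zeta(t,x)$; this identifies $\cE_{t,x}$ with the sublinear expectation $\cE^0$ of the Neufeld--Nutz canonical setup on paths started at the origin. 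It then cites \cite[Theorem 2.1]{neufeld2017nonlinear} as a black box --- $X^0$ is a nonlinear L\'evy process under $\cE^0$ --- and obtains (b) and (c) by rewriting both sides of each identity through the transfer lemma. What this buys is that all the genuinely delicate probabilistic work (behaviour of semimartingale characteristics under time-space shifts, stability under conditioning and pasting, measurable selection) is done once, in the cited reference, and only for the origin-based family.

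The gap in your proposal sits precisely where you say you would ``rely on \cite{neufeld2017nonlinear} for the technical facts about the set $\cP(t,x)$.'' Those facts (pasting/conditioning stability, the identity of Lemma~\ref{lem:dppnonlinlevy}, the L\'evy property) are established there for the family of measures on $\Omega_0$ under which the canonical process starts at $0$ at time $0$ --- not for $\cP(t,x)$ with general $(t,x)$. To invoke them for $\cP(t,x)$ you need exactly the bridge the paper builds, i.e.\ the push-forward correspondence between $\cP(t,x)$ and $\cP_0$; without it, your steps ``shifting a measure $\PP\in\cP(t,x)$ in time produces another element of $\cP(t,x)$'' (for (b)) and ``the supremum over $\cP(t,x)$ factorizes'' (for (c)) are not citations but claims that still have to be proved, and they hide real work: one must check that the image of $\PP$ under the time-space shift is again a semimartingale law with absolutely continuous characteristics taking values in $\Theta$, including the point that the characteristics must be recomputed relative to the raw filtration of the shifted canonical process rather than the shifted original filtration. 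So either prove the transfer lemma first (the paper's route, and the shortest repair of your argument), or accept that you are re-proving a substantial part of \cite[Theorem 2.1]{neufeld2017nonlinear} by hand rather than using it.
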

\begin{rmk}
  The definition is generalizing the definition of non-linear L\'evy
  processes introduced in \cite{peng2004nonlinear}. If $\Theta$ is a
  singleton then the $X$ is a Levy process in the classical sense.
\end{rmk}

\begin{proposition}
  \label{prop:nL_mb_dpp}
  For each $g\colon \R \to \R$ bounded and Lipschitz continuous, the
  map $(t,x) \mapsto \cE_{t,x}[g(X_T)]$ is measurable and fulfills the
  dynamical programming principle. More precisely, $(\cE_{t,x})$
  satisfies~\ref{i:meas} and~\ref{i:dpp}.
\end{proposition}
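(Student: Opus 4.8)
The plan is to establish the two claims—measurability and the dynamic programming principle—by transporting the corresponding results for the single starting point from the reference \cite{neufeld2017nonlinear} to our parametrized family $(\cE_{t,x})$. The key structural fact is that $\cE_{t,x}$ is, by construction in~\eqref{eq:6_sup}, a supremum over the set $\cP(t,x)$ of measures under which $X$ is a semimartingale with characteristics in $\Theta$ and started (deterministically) at $x$ before time $t$. I would first reduce everything to the case $t=0$, $x=0$ by exploiting the spatial and temporal homogeneity encoded in Lemma~\ref{lem:txLevy}: under any $\PP \in \cP(t,x)$ the increments of $X$ after time $t$ behave like those of the canonical nonlinear L\'evy process started afresh, so $\cE_{t,x}[g(X_T)] = \cE_{0,0}[g(x + X_{T-t})]$ for $g$ bounded and continuous. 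This identity turns the joint measurability in $(t,x)$ into continuity/measurability of the single sublinear semigroup evaluated along the shifted argument.

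For the measurability claim in~\ref{i:meas}, I would argue that under the integrability and uniform small-jump conditions~\eqref{eq:5} and~\eqref{eq:4_sup}, the map $(t,x)\mapsto \cE_{0,0}[g(x+X_{T-t})]$ is in fact continuous: one shows Lipschitz continuity in $x$ directly from the Lipschitz property of $g$ and monotonicity~\ref{i:monoton}, while continuity in $t$ follows from a moment estimate on $X_{T-t}-X_{T-t'}$ that is uniform over $\Theta$, again using~\eqref{eq:5}--\eqref{eq:4_sup}. Continuity of $(t,x)\mapsto v(t,x)$ then immediately yields that the compositions $v(\tau,X_\tau)$, $\inf_t v(t,X_\tau)$ and $\sup_t v(t,X_\tau)$ are measurable, hence lie in $\cH$ once one checks the requisite integrability, which is automatic here because $g$ is bounded and therefore $v$ is bounded.

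For the dynamic programming principle~\ref{i:dpp}, I would invoke the DPP proved for this class of nonlinear L\'evy processes in \cite{neufeld2017nonlinear}, which in their formulation states that for bounded continuous $g$ and a stopping time $\tau$,
\[
\cE_{t,x}[g(X_T)] = \cE_{t,x}\!\left[\cE_{\tau,X_\tau}[g(X_T)]\right] = \cE_{t,x}[v(\tau,X_\tau)].
\]
The main task is to rewrite this in the discounted form with $B_t=e^{rt}$ demanded by~\ref{i:dpp}: since $\varrho\equiv 0$ in the present setup and $B$ is a deterministic function of time, the factors $B_t$, $B_\tau^{-1}$ pass through $\cE_{t,x}$ by positive homogeneity of the sublinear functional, so the discounting only rescales $g$ by a deterministic factor and the identity carries over verbatim.

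The hard part will be the passage from the stopping-time DPP stated in \cite{neufeld2017nonlinear}, which is typically proved first for deterministic times and then extended, to arbitrary $\tau\in\cT_{t,T}$ while simultaneously verifying that the conditional evaluation $\cE_{\tau,X_\tau}[\cdot]$ appearing there coincides with our value function $v(\tau,X_\tau)$ in the required measurable-selection sense. Concretely, one must check that the family $\{\cP(t,x)\}$ satisfies the stability-under-pasting (concatenation) and measurable-selection hypotheses of that reference, so that the sup over $\cP(t,x)$ can be dynamically decomposed at $\tau$; this is where conditions~\eqref{eq:5} and~\eqref{eq:4_sup} and the Borel measurability of $\Theta$ are essential, and where most of the technical care is needed. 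I expect the remaining arguments—boundedness propagation and the deterministic discounting—to be routine by comparison.
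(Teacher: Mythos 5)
Your overall strategy coincides with the paper's: reduce to the origin-started process via a space--time shift, obtain measurability from continuity of $(t,x)\mapsto \cE^0[g(x+X^0_{T-t})]$, and derive~\ref{i:dpp} from the stopping-time decomposition in \cite{neufeld2017nonlinear}, with the discounting handled at the end. Two remarks on the reduction itself: the clean identity $\cE_{t,x}[Y]=\cE^0[Y\circ\theta(t,x)]$ is the paper's Lemma~\ref{lem:linkToNN} (a push-forward duality), of which Lemma~\ref{lem:txLevy} is itself a consequence, so citing the latter for the reduction is slightly circular; and for measurability you propose to re-prove continuity by hand (Lipschitz in $x$, uniform-in-$\Theta$ moment bounds in $t$), where the paper simply cites \cite[Theorem 2.5]{neufeld2017nonlinear}. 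Both of these are harmless.

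The genuine gap is exactly the step you defer as ``the hard part.'' You plan to verify stability-under-pasting and measurable-selection hypotheses for the family $\{\cP(t,x)\}$ so that the supremum can be decomposed at $\tau$ and the inner evaluation identified with $v(\tau,X_\tau)$, but you never carry this out, and the tower identity $\cE_{t,x}[g(X_T)]=\cE_{t,x}\left[\cE_{\tau,X_\tau}[g(X_T)]\right]$ you invoke is precisely what must be proven: \cite{neufeld2017nonlinear} states its DPP only for the process started at the origin, not for the shifted family $\cE_{t,x}$. The paper closes this gap without re-verifying any pasting hypotheses, by transferring everything to $\Omega_0$, where \cite[Theorem 2.1(ii)]{neufeld2017nonlinear} (Lemma~\ref{lem:dppnonlinlevy}) already gives the pasting identity $\cE^0[Y(\omega\otimes_\sigma\cdot)]=\cE^0\left[\cE^0[Y(\tilde\omega\otimes_\tau\cdot)]\vert_{\tilde\omega=\omega\otimes_\sigma\cdot}\right]$ for arbitrary stopping times. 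The concrete work is then: (i) showing that $\tau_{t,x}:=\tau\circ\theta(t,x)-t$ is an $(\F^0_s)$-stopping time with values in $[0,T-t]$, using the measurability of $\theta(t,x)$ with respect to the shifted filtrations; (ii) computing $Y(\omega\otimes_{\tau_{t,x}}\tilde\omega)$ explicitly for $Y=B_T^{-1}g(X_T\circ\theta(t,x))$; and (iii) recognizing the resulting inner $\cE^0$-expectation as $v(\tau_{t,x}+t,\,x+X^0_{\tau_{t,x}})$ up to discount factors, which is what identifies the conditional evaluation with the value function. Without (i)--(iii), or an equivalent verification of the hypotheses you list, the proof is not complete. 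A secondary imprecision: $B_\tau^{-1}$ is random, so it does not ``pass through'' $\cE_{t,x}$ by positive homogeneity; the discounting works because $B_\tau^{-1}$ cancels pointwise against the factor $B_\tau$ in the definition of $v(\tau,X_\tau)$, leaving only the constant $B_T^{-1}$, which is what homogeneity then extracts.
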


We keep fixed $g\colon \R \to \R$ bounded and Lipschitz continuous and
consider the European option prices in the nonlinear Levy model
\[v(t,x) := e^{rt}\cE_{t,x}[e^{-rT} g(X_T)].\]

\begin{proposition}
  \label{prop:pdeLevy}
  $v$ is bounded and continuous and the unique viscosity solution of
  the equation
  \begin{multline}
    \label{eq:pdeLevy}
    -\ddt v(t,x) + rv(t,x)\\
    - \sup_{(b,c,F)\in \Theta} \left[\tfrac{c}{2} \ddxx v(t,x) + b
      \ddx v(t,x) + \int_\R v(t,z) - v(t,0) - \ddx v(t,0) h(z) F(\d z)
    \right] = 0,
  \end{multline}
  with terminal condition $v(T,x) = g(x)$, $x\in E$.
\end{proposition}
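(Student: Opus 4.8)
The plan is to reduce everything to the undiscounted nonlinear expectation and then lean on the analysis of non-linear L\'evy processes in~\cite{neufeld2017nonlinear}. Since $e^{-rT}>0$ is a constant and $\cE_{t,x}$ is positively homogeneous, I would write $v(t,x) = e^{-r(T-t)}\tilde v(t,x)$ with $\tilde v(t,x) := \cE_{t,x}[g(X_T)]$; the discount factor is smooth in $t$ and contributes exactly the linear term $rv$ to the equation, so it suffices to understand $\tilde v$. Boundedness is then immediate: by~\ref{i:const} and~\ref{i:monoton} one has $\abs{\tilde v(t,x)} \leq \norm{g}{\infty}$, whence $v$ is bounded on $[0,T]\times E$. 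The substantive parts are continuity, the viscosity solution property, and uniqueness.

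For continuity I would first exploit the spatial homogeneity recorded in Lemma~\ref{lem:txLevy}: the increments of $X$ under $\cP(t,x)$ are stationary and $\Theta$-constrained, so comparing $\tilde v(t,x)$ and $\tilde v(t,y)$ amounts to shifting the terminal value, and Lipschitz continuity of $g$ yields $\abs{\tilde v(t,x)-\tilde v(t,y)} \leq \Lip\,\abs{x-y}$ uniformly in $t$. Continuity in $t$ is more delicate and is where the uniform moment bounds~\eqref{eq:5} and~\eqref{eq:4_sup} enter: combining the dynamic programming principle of Proposition~\ref{prop:nL_mb_dpp} with the Lipschitz bound reduces $\abs{\tilde v(t,x)-\tilde v(s,x)}$ for $s$ near $t$ to an estimate of the form $\cE_{t,x}[\abs{X_s - x}]$, which tends to zero uniformly thanks to~\eqref{eq:5}. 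Together these give joint continuity of $\tilde v$, hence of $v$.

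To obtain the viscosity solution property I would run the standard dynamic-programming argument. Fixing a smooth test function $\phi$ touching $v$ from above (resp.\ below) at $(t,x)$, I apply the DPP~\ref{i:dpp} over a short interval $[t,t+\delta]$ and, under each competing $\PP\in\cP(t,x)$, use the Dynkin/It\^o formula for semimartingales with characteristics in $\Theta$. Dividing by $\delta$ and letting $\delta\to 0$ produces the generator evaluated at $\phi$, and taking the supremum over the admissible characteristics yields precisely the nonlocal operator appearing in~\eqref{eq:pdeLevy}; here the uniform integrability~\eqref{eq:4_sup} guarantees that the integral term is well defined near the origin and that the interchange of limit and supremum is legitimate. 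This identifies $v$ as a continuous viscosity solution of~\eqref{eq:pdeLevy} with terminal datum $g$, at which point one may invoke the corresponding HJB characterization of~\cite{neufeld2017nonlinear} rather than redo the estimates by hand.

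The main obstacle is uniqueness, which amounts to a comparison principle for the nonlocal Hamilton--Jacobi--Bellman equation~\eqref{eq:pdeLevy}. For integro-differential operators with possibly unbounded L\'evy measures, the usual doubling-of-variables argument must be adapted to control the singular part of the kernel near the origin, and~\eqref{eq:4_sup} is exactly the structural condition that makes this work. I would therefore appeal to the comparison and uniqueness results for such equations in~\cite{neufeld2017nonlinear}, after checking that the boundedness and continuity already established place $v$ in the function class for which comparison holds. Uniqueness within that class then upgrades the existence statement to the claimed identification of $v$ as \emph{the} viscosity solution of~\eqref{eq:pdeLevy}.
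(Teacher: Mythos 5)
Your proposal is correct and takes essentially the same route as the paper: the paper's proof simply invokes \cite[Theorem 2.5]{neufeld2017nonlinear} for the case $r=0$ (made applicable through the identification $\cE_{t,x}[g(X_T)] = \cE^0[g(x+X^0_{T-t})]$ of Lemma~\ref{lem:linkToNN}) and notes that the extension to general $r\in\R$ is straightforward. Your explicit discounting transformation $v(t,x)=e^{-r(T-t)}\cE_{t,x}[g(X_T)]$ is precisely that straightforward extension, and your hand-sketched continuity, viscosity, and comparison arguments only expand what the cited theorem already supplies, as you yourself note by deferring to it.
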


\subsection{Optimal stopping and games}

Define
\[
  h(x):= \sup_{0\leq t\leq T} v(t,x)\] and
\[
  w(t,x) := \inf_{\tau \in \cT_{t,T}} e^{rt}
  \cE_{t,x}[e^{-r\tau}f(X_\tau)].\] Moreover, set for $x\in E$,
\begin{alignat}{2}
  t^*(x) &:= \argmax \{v(t,x)\,\colon \, t\in [0,T]\},&\qquad \theta^*(x) &:= \max t^*(x),\\
  C^*&:= \{(t,x)\in [0,T]\times E\,\vert\, t<\theta^*(x)\},&\qquad D^*
  &:= (C^*)^c.
\end{alignat}

$w$ is the value function a stochastic game with reward process
$h(X_t)$. The first player selects the control and thus the dynamics
of $X$ by choosing $\PP\in \cP(t,x)$. The second player then decides
when to stop the game at time $\tau$.

A typical setup for such games are American options with volatility
uncertainty: For instance, assume that the seller selects the exercise
time $\tau$ and
\[ \Theta := \{(r-\frac12 \sigma^2, \sigma^2, 0)\colon \sigma \in
  [\usigma,\bsigma]\}\] for $\bsigma$, $\usigma\in (0,\infty)$ with
$\usigma<\bsigma$. Then, $w$ becomes the issuers value function of the
issuers value function of the American option under volatility within
the range $[\usigma,\bsigma]$, cf. \cite[Section 5]{nutz2015optimal}.

\begin{lemma}
  \label{lem:wconststopping} Let $(s,x)\in [0,T]\times E$ be such that
  $w(s,x) = h(x)$. Then, $w(t,x) = h(x)$ for all $t\in [s,T]$.
\end{lemma}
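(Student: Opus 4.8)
The plan is to sandwich $w(\cdot,x)$ between $h(x)$ from above and below, using only two ingredients: the constant stopping rule, and the time-homogeneity of the nonlinear L\'evy process supplied by Lemma~\ref{lem:txLevy}. Throughout I use that here $\cE_{t,x}$ is a sublinear, positively homogeneous expectation, so positive deterministic constants may be pulled through it.

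First I would establish the a priori bound $w(t,x)\leq h(x)$ for every $(t,x)$. Testing the infimum defining $w$ with the constant stopping time $\tau\equiv t\in\cT_{t,T}$ and using positive homogeneity gives $w(t,x)\leq e^{rt}\cE_{t,x}[e^{-rt}h(X_t)]=\cE_{t,x}[h(X_t)]$. Since $\{X_t=x\}$ is a full $(t,x)$-scenario by Lemma~\ref{lem:txLevy}(a) and $h(X_t)=h(x)$ on that event, the defining property of full scenarios together with \ref{i:const} yields $\cE_{t,x}[h(X_t)]=\cE_{t,x}[h(x)]=h(x)$, whence $w(t,x)\leq h(x)$.

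The core step is to show that $t\mapsto w(t,x)$ is non-decreasing. I rewrite the discounted payoff in relative time: for $\tau\in\cT_{t,T}$, putting $\rho:=\tau-t$ and using positive homogeneity, $e^{rt}\cE_{t,x}[e^{-r\tau}h(X_\tau)]=\cE_{t,x}[e^{-r\rho}h(X_{t+\rho})]$, where $\rho$ is a $[0,T-t]$-valued stopping rule for the time-shifted filtration. Time-homogeneity of the nonlinear L\'evy process, namely stationarity of increments from Lemma~\ref{lem:txLevy} together with the fact that the parameter set $\Theta$ in \eqref{eq:1_sec5} does not depend on time, implies that the value $\cE_{a,x}[e^{-r\rho}h(X_{a+\rho})]$ of a fixed relative rule $\rho$ is independent of the base time $a$; write $\Phi(\rho)$ for this common value. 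Consequently $w(a,x)=\inf\{\Phi(\rho)\colon \rho \text{ a relative rule with }\rho\leq T-a\}$, and since the feasible set $\{\rho\leq T-a\}$ shrinks as $a$ increases, the infimum is non-decreasing in $a$; that is, $w(s,x)\leq w(t,x)$ whenever $s\leq t$. Combining the two steps completes the proof: for $t\in[s,T]$ the hypothesis and monotonicity give $h(x)=w(s,x)\leq w(t,x)$, while the a priori bound gives $w(t,x)\leq h(x)$, so $w(t,x)=h(x)$.

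I expect the main obstacle to be making the time-homogeneity rigorous, i.e.\ justifying that $\Phi(\rho)$ is genuinely independent of the base time. This requires transporting a stopping time $\tau\in\cT_{t,T}$ to the shifted rule $\rho=\tau-t$ measurably, and arguing that the families $\cP(t,x)$ and $\cP(s,x)$ are time-shifts of one another, so that the suprema defining $\cE_{t,x}$ and $\cE_{s,x}$ agree on the correspondingly shifted functionals. This hinges on the time-homogeneous construction of the sets $\cP(t,x)$ in \eqref{eq:1_sec5} and on Lemma~\ref{lem:txLevy}; once it is in place, the remainder is the elementary monotonicity of infima over nested feasible sets.
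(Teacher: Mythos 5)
Your proposal is correct and takes essentially the same route as the paper: the paper's proof by contradiction is exactly your monotonicity statement $w(s,x)\le w(t,x)$ for $s\le t$, obtained by shifting a stopping time $\tau\in\cT_{t,T}$ back to one in $\cT_{s,T}$ and using the factorization $\cE_{t,x}[Y]=\cE^0[Y\circ\theta(t,x)]$ of Lemma~\ref{lem:linkToNN} together with positive homogeneity to pull out the deterministic discount factor $e^{-r(t-s)}$ --- precisely the rigorous form of the time-homogeneity you identified as the key step. The trivial bound $w\le h$ from the constant stopping rule enters the paper's argument implicitly in the same way (to turn $w(t,x)\neq h(x)$ into $w(t,x)<h(x)$).
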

The proof is shown below in Section~\ref{ssec:nL_proofs} below.

\begin{proposition}
  \label{prop:EAO_Levy}
  \begin{enumerate}[label=(\alph*)]
  \item\label{i:EAO_Levy_onesided} It holds that for all
    $(t,x) \in [0,T] \times E$,
    \[ v(t,x) \leq w(t,x) \quad\text{and}\quad C^* \subseteq (\graph
      t^*)^c.\]
  \item\label{i:EAO_Levy_twosided} If, in addition,
    \[\Theta \subset \{(b,c,F) \in \R\times \R_+\times \cL \vert F
      \equiv 0\}\] and $\theta$ is continuous, then for all
    $(t,x) \in [0,T]\times E$,

    \[ v(t,x) = w(t,x).\]

  \end{enumerate}
\end{proposition}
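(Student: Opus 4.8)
The plan is to reduce everything to the abstract results of Section~\ref{sec:prob}, after checking that the sublinear expectations $(\cE_{t,x})$ defined in~\eqref{eq:6_sup} fit that framework. For part~\ref{i:EAO_Levy_onesided} all the needed hypotheses are already available: $\cE_{t,x}$ satisfies \ref{i:const} and \ref{i:monoton} (as noted right after~\eqref{eq:6_sup}), Proposition~\ref{prop:nL_mb_dpp} furnishes the measurability property~\ref{i:meas} and the dynamic programming principle~\ref{i:dpp}, and Proposition~\ref{prop:pdeLevy} guarantees that $v$ is bounded and continuous. With these in hand, the inequality $v \le w$ is exactly the right-hand bound of Proposition~\ref{prop:ubounds}, read in the present notation with $\varrho \equiv 0$ and $B_t = e^{rt}$ and with the obstacle $h = \sup_{0\le t\le T} v(t,\cdot) \ge v$; and the inclusion $C^* \subseteq (\graph t^*)^c$ is the second assertion of Corollary~\ref{cor:contreg} applied to the continuous function $v$. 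So part~\ref{i:EAO_Levy_onesided} requires no genuinely new argument beyond this bookkeeping.

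For part~\ref{i:EAO_Levy_twosided} the extra hypothesis $F \equiv 0$ kills the jump part of every admissible triplet, so that under each $\PP \in \cP(t,x)$ the canonical process $X$ is a continuous semimartingale; hence $X$ has continuous trajectories, which is precisely what the path-based results of Section~\ref{sec:prob} demand. Combining this with continuity of $v$ and of the selection $\theta$ (satisfying $\theta(x) \in t^*(x)$), I would invoke the $(t^*, w, h)$-version of Corollary~\ref{cor:ctspath}: for every $(t,x)$ with $t \le \theta(x)$ the hitting time $\tau := \inf\{s \ge t : s = \theta(X_s)\}$ meets $\graph(t^*)$, the event $\{\tau \in t^*(X_\tau)\}$ is a full $(t,x)$-scenario, and therefore $w(t,x) = v(t,x)$ on $\{t \le \theta(x)\}$.

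It then remains to pass to the upper region $t > \theta(x)$, and this is exactly the role of Lemma~\ref{lem:wconststopping}. Since $\theta(x) \in t^*(x)$ we have $v(\theta(x),x) = \sup_{0\le s\le T} v(s,x) = h(x)$, so the previous step yields $w(\theta(x),x) = h(x)$. Lemma~\ref{lem:wconststopping} propagates this forward, giving $w(t,x) = h(x)$ for all $t \in [\theta(x),T]$, so that on the upper region $w$ has reached the obstacle. Reading the two regimes together against the a~priori sandwich $v \le w \le h$ from part~\ref{i:EAO_Levy_onesided} then identifies $w$ with $v$ on all of $[0,T]\times E$.

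The main obstacle I anticipate is twofold. First, one must verify that $\tau$ is an admissible stopping rule, i.e.\ $\tau \in \cT_{t,T}$, and that $\{\tau \in t^*(X_\tau)\}$ is a full $(t,x)$-scenario \emph{uniformly over all} $\PP \in \cP(t,x)$; this hinges on the pathwise continuity supplied by $F \equiv 0$, so that $s \mapsto \theta(X_s) \vee t$ is a continuous self-map of $[t,T]$ and the Brouwer fixed-point argument of Corollary~\ref{cor:ctspath} applies $\PP$-a.s.\ for every admissible $\PP$, together with Lemma~\ref{lem:txLevy}(a) ensuring that $\{X_t = x\}$ is full. Second, the stitching along $t = \theta(x)$ is the delicate point, since Corollary~\ref{cor:ctspath} by itself only delivers the equality below the selection and it is Lemma~\ref{lem:wconststopping} that must carry it across the free boundary into the stopping region.
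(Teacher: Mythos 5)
Part~\ref{i:EAO_Levy_onesided} of your proposal is correct and is exactly the paper's argument: Proposition~\ref{prop:nL_mb_dpp} supplies~\ref{i:meas} and~\ref{i:dpp}, and then $v\le w$ and $C^*\subseteq(\graph t^*)^c$ follow from Proposition~\ref{prop:ubounds} and Corollary~\ref{cor:contreg}. Likewise, the first half of your argument for part~\ref{i:EAO_Levy_twosided} --- pathwise continuity under every $\PP\in\cP(t,x)$ because $F\equiv 0$, then the $(t^*,w,h)$-version of Corollary~\ref{cor:ctspath} giving $w(t,x)=v(t,x)$ whenever $t\le\theta(x)$ --- is precisely what the paper does; its entire proof of part~\ref{i:EAO_Levy_twosided} is the citation of Corollary~\ref{cor:ctspath}, whose hypothesis is $t\le\theta(x)$.

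The stitching step across the boundary, however, is a genuine error, not a delicate technicality. Lemma~\ref{lem:wconststopping} gives $w(t,x)=h(x)$ for all $t\ge\theta^*(x)$, and the sandwich $v\le w\le h$ then yields $v(t,x)=w(t,x)$ at such a point \emph{only if} $v(t,x)=h(x)$, i.e.\ only if $t\in t^*(x)$. Since $\theta^*(x)=\max t^*(x)$ is the \emph{last} maximizer of $v(\cdot,x)$, for every $t>\theta^*(x)$ one has $v(t,x)<h(x)=w(t,x)$: your own computation proves the opposite of what you conclude on the upper region. In fact the unrestricted identity is false whenever $\theta^*(x)<T$ for some $x$. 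Concretely, take $\Theta=\{(0,1,0)\}$, $r=0$ and $g(y)=e^{-y^2}$; then
\[
v(t,x)=\bigl(1+2(T-t)\bigr)^{-1/2}\exp\Bigl(-\tfrac{x^2}{1+2(T-t)}\Bigr),
\]
$t^*(x)$ is the singleton $\bigl\{\min\bigl(\max(T+\tfrac12-x^2,\,0),\,T\bigr)\bigr\}$, so the selection $\theta=\theta^*$ is continuous, and for $\tfrac1{\sqrt2}<\abs{x}<\sqrt{T+\tfrac12}$ and $t\in(\theta^*(x),T]$ one has $v(t,x)<h(x)=w(t,x)$. So no argument can close the gap you identified: the equality $v=w$ holds exactly on $\{(t,x)\colon t\le\theta^*(x)\}$, which is all that Corollary~\ref{cor:ctspath} (and hence the paper's one-line proof) establishes, and the statement's ``for all $(t,x)$'' must be read with this restriction. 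Lemma~\ref{lem:wconststopping}, far from propagating the identity forward in time, is precisely the tool that shows it fails above $\graph(t^*)$.
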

\begin{proof}
  By Proposition~\ref{prop:nL_mb_dpp} we can apply
  Proposition~\ref{prop:ubounds} and Corollary~\ref{cor:contreg} to
  obtain Proposition~\ref{prop:EAO_Levy}.\ref{i:EAO_Levy_onesided}.
  Item~\ref{i:EAO_Levy_twosided} then follows from
  Corollary~\ref{cor:ctspath}.
\end{proof}

\begin{theorem} \label{prop:fbpBachelier} Assume that
  \[\Theta \subset \{(b,c,F) \in \R\times \R_+\times \cL \vert F
    \equiv 0\},\] and that $\theta$ is continuous. Then, $w$ is a
  viscosity solution of the free boundary problem
  \begin{equation}
    \label{eq:fbpBachelier}
    \begin{alignedat}{3}
      &-\ddt w(t,x) + rw(t,x)\hspace{-22em}&\hspace{22em} &&&\\
      &&- \sup_{(b,c,0)\in \Theta} \left[\tfrac{c}{2} \ddxx
        w(t,x) + b \ddx w(t,x)\right]&\leq 0,&\quad (t,x) &\in [0,T]\times E,\\
      &-\ddt w(t,x) + rw(t,x)\hspace{-22em}&\hspace{22em}&&&\\
      &&- \sup_{(b,c,0)\in \Theta} \left[\tfrac{c}{2} \ddxx
        w(t,x) + b \ddx w(t,x)\right]&= 0,&\quad (t,x) &\in C,\\
      &&w(t,x) &= h(x),& (t,x) &\in C^c,\\
      &&C= \{(t,x) \in [0,T)\times E &\colon w(t,x) < h(x)\},&&\\
      &&\text{(continuous fit)}\qquad w(t,x) &= h(x),& (t,x) &\in
      \partial C,
    \end{alignedat}
  \end{equation}
  and, if $v$ and $\theta$ are $C^1$, then also the smooth fit
  condition holds true
  \[\nabla_x w(t,x) = \nabla_x h(x),\qquad (t,x) \in \partial C.\]
\end{theorem}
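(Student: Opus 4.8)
The plan is to obtain Theorem~\ref{prop:fbpBachelier} as a specialization of the abstract free boundary result Theorem~\ref{thm:PDEequalitySup}, after first putting the non-linear Lévy generator into the local form required there, and then identifying the probabilistic value $w$ with the piecewise function that Theorem~\ref{thm:PDEequalitySup} constructs from $v$.

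First I would isolate the operator. Under the standing hypothesis $\Theta\subset\{(b,c,F):F\equiv 0\}$ the integral term in \eqref{eq:pdeLevy} vanishes identically, so that the equation satisfied by $v$ collapses to
\[ -\ddt v(t,x) + r v(t,x) - \sup_{(b,c,0)\in\Theta}\Big[\tfrac{c}{2}\ddxx v(t,x) + b\,\ddx v(t,x)\Big] = 0. \]
Writing $L(\phi)(t,x) := \sup_{(b,c,0)\in\Theta}\big[\tfrac{c}{2}\ddxx\phi(t,x) + b\,\ddx\phi(t,x)\big] - r\phi(t,x)$, this is exactly the Kolmogorov equation \eqref{eq:kolmogorov}. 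The decisive gain over the general Lévy case is that $L$ is now a \emph{local} operator: with the jump measures removed, $L\phi(t,x)$ depends only on the pointwise values of $\phi,\ddx\phi,\ddxx\phi$ at $(t,x)$, and since the supremum is taken over terms affine in $(\ddx\phi,\ddxx\phi)$ with $c\geq 0$, the associated $F$ is degenerate elliptic. Hence the structural hypotheses underlying Theorem~\ref{thm:PDEequalitySup} are met, and by Proposition~\ref{prop:pdeLevy} the function $v$ is a bounded, continuous viscosity solution of this equation, which supplies the input required by that theorem.

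Second, I would apply Theorem~\ref{thm:PDEequalitySup} to this $v$ and $L$, with $h=\sup_{t}v(t,\cdot)$ and the boundary $\vartheta$ defined there; its continuity follows from the assumed continuity of $\theta$. This yields the piecewise function $\tilde w$, equal to $v$ on $C=\{(t,x):t<\vartheta(x)\}$ and to $h$ off $C$, and asserts that $\tilde w$ is a viscosity solution of \eqref{eq:fbp}, which for $F\equiv 0$ is precisely \eqref{eq:fbpBachelier}, together with the continuous fit; the smooth fit is inherited whenever $v$ and $\theta$ are of class $C^1$. It therefore remains only to show that the probabilistic value $w$ coincides with $\tilde w$.

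This identification $w=\tilde w$ is the step I expect to be the main obstacle, and it is where the probabilistic results of Section~\ref{sec:prob} enter. On the continuation side, for $t\leq\theta(x)$ the continuous selector $\theta(x)\in t^*(x)$ together with Corollary~\ref{cor:ctspath} (in its $(t^*,w,h)$ form) gives $w(t,x)=v(t,x)$, so $w$ agrees with $\tilde w$ there. On the stopping side, I would first note that $w(\theta(x),x)=v(\theta(x),x)=h(x)$, since $\theta(x)$ maximizes $t\mapsto v(t,x)$, and then invoke Lemma~\ref{lem:wconststopping} to propagate the equality $w(t,x)=h(x)$ to all $t\geq\theta(x)$, matching $\tilde w$ on $C^c$. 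The delicate point is the bookkeeping of the free boundary: the analytic construction runs through the smallest maximizer $\vartheta$, whereas the probabilistic argument uses the continuous selector $\theta$, so one has to check that the intrinsic continuation region $\{w<h\}$ really coincides with $\{t<\vartheta(x)\}$. This is exactly what the monotonicity of Lemma~\ref{lem:wconststopping} and the containment $C^*\subseteq(\graph t^*)^c$ from Corollary~\ref{cor:contreg} and Proposition~\ref{prop:EAO_Levy} are there to guarantee. Once $w=\tilde w$ is established, both fit conditions and the viscosity super- and subsolution properties transfer verbatim from Theorem~\ref{thm:PDEequalitySup}.
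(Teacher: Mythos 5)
Your proposal is correct and follows essentially the same route as the paper: the paper's (much terser) proof likewise combines Lemma~\ref{lem:wconststopping} for the identification $w=h$ on the stopping region with Proposition~\ref{prop:pdeLevy} and Theorem~\ref{thm:PDEequalitySup}, relying on Proposition~\ref{prop:EAO_Levy} (i.e.\ Corollary~\ref{cor:ctspath}) for the continuation-side identification $w=v$. Your write-up is in fact more explicit than the paper's two-line proof about the points that need care --- the locality of the operator once $F\equiv 0$, and the matching of the intrinsic region $\{w<h\}$ with the analytic region $\{t<\vartheta(x)\}$ --- but the underlying argument is the same.
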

\begin{proof}
  By Lemma~\ref{lem:wconststopping}, $w(t,x) = h(x)$ for
  $(t,x)\in D:= C^c$. The statement follows by
  Proposition~\ref{prop:pdeLevy} and Theorem~\ref{thm:PDEequalitySup}.
\end{proof}

\subsection{Proofs}
\label{ssec:nL_proofs}
To obtain the desired properties for $(t,x) \mapsto \cE_{t,x}$ we want
to apply the setup from \cite{neufeld2017nonlinear}. Define
$\Omega_0:= \{ \omega\in \Omega\,\colon\, \omega(0) = 0\}$ and let
$X^0$ be the restriction of $X$ to $\Omega_0$, and denote by
$(\F^0_t)$ we denote the raw filtration generated by $X^0$. We now
define for $(t,x)\in [0,\infty) \times \R$ the mapping
$\theta(t,x) \colon \Omega_0 \to \Omega$,
\[ \theta(t,x)(\omega) := x + \omega((.-t) \wedge 0).\] $\theta(t,x)$
is measurable.

Moreover, it has the left inverse
\[ \zeta(t,x)(\omega):= \omega(.+t) - \omega(t).\] Indeed, for
$\omega\in \Omega_0$,
\[ \zeta(t,x)(\theta(t,x)(\omega)) = \zeta(t,x)(x + \omega((.-t)\wedge
  0)) = x + \omega - x = \omega.\] We define by $\cP_0$ the set of
restricts of $\PP$ to $(\Omega_0, \F^0)$, for all $\PP\in \cP(0,0)$
and set
\[\cE^0[Y] := \sup_{\PP \in \cP_0} \EE^\PP[Y]\]
for $Y\in \cL^\infty(\Omega_0)$.

\begin{lemma}\label{lem:linkToNN}
  For each $Y\in \cL^\infty(\Omega)$ and $(t,x)\in [0,T] \times E$, it
  holds that
  \begin{equation}
    \label{eq:linkToNN}
    \cE_{t,x}[Y] = \cE^0[Y\circ \theta(t,x)].
  \end{equation}
  In particular, for each $N\in \N$ and
  $t\leq s_1 \leq \ldots\leq s_n\leq T$ and each $\phi\in C_b(\R^n)$:
  \[ \cE_{t,x}[\phi(X_{s_1},\ldots, X_{s_n})] =
    \cE^0[\phi(x+X^0_{s_1-t},\ldots, x+X^0_{s_n-t})]\]
\end{lemma}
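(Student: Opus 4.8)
The plan is to read \eqref{eq:linkToNN} as a change of variables between the measure families $\cP(t,x)$ and $\cP_0$ induced by the maps $\theta(t,x)$ and $\zeta(t,x)$. The excerpt already records that $\zeta(t,x)$ is a left inverse of $\theta(t,x)$ on $\Omega_0$, i.e. $\zeta(t,x)\circ\theta(t,x)=\mathrm{id}_{\Omega_0}$. First I would complement this with the dual relation: on the frozen set $\Omega_{t,x}:=\{\omega\in\Omega\colon \omega_s=x \text{ for all } s\le t\}$ one also has $\theta(t,x)\circ\zeta(t,x)=\mathrm{id}$, which is a direct computation from the definitions ($\theta(t,x)(\omega)$ stays at $x$ on $[0,t]$ and equals $x+\omega(\cdot-t)$ thereafter). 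Since every $\PP\in\cP(t,x)$ charges $\Omega_{t,x}$ fully by definition, this gives, for every bounded measurable $Y$,
\[ \Ex{\PP}{Y}=\Ex{\PP}{Y\circ\theta(t,x)\circ\zeta(t,x)}=\Ex{(\zeta(t,x))_\ast\PP}{Y\circ\theta(t,x)},\qquad \PP\in\cP(t,x), \]
where the last step is the transfer formula for the push-forward $(\zeta(t,x))_\ast\PP$, a measure on $\Omega_0$.

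The crux is then to show that $\PP\mapsto(\zeta(t,x))_\ast\PP$ is a bijection from $\cP(t,x)$ onto $\cP_0$, with inverse $\PP_0\mapsto(\theta(t,x))_\ast\PP_0$. The two inversion relations above already make these push-forwards mutually inverse as maps between measures, so what remains is that they respect the two families, namely $(\zeta(t,x))_\ast\cP(t,x)\subseteq\cP_0$ and $(\theta(t,x))_\ast\cP_0\subseteq\cP(t,x)$. I expect this to be the main obstacle. Here one must simultaneously track three transformations built into $\theta(t,x)$: the spatial translation by the constant $x$, the deterministic time shift by $t$, and the freezing of the path before time $t$. The translation by a constant leaves the differential characteristics $(b,c,F)$ unchanged, since they describe the local increment behaviour; the time shift maps the characteristics at time $s\ge t$ to those at time $s-t\ge 0$, so because $\Theta$ is time-homogeneous the pointwise constraint $(b,c,F)\in\Theta$ ($\PP\otimes\d t$-a.e.) is preserved; and the freezing produces exactly the requirement $X_s=x$ a.s. for $s\le t$ defining $\cP(t,x)$ (respectively $X_0=0$ for $\cP_0$). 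Making this rigorous amounts to invoking the standard transformation rules for semimartingale characteristics under deterministic time shifts and spatial translations, together with the structural description of $\fP$ from~\cite{neufeld2017nonlinear}, so that the push-forward of a semimartingale with absolutely continuous characteristics is again of this type and lands in the correct family.

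Granting the bijection, the identity follows at once:
\[ \cE_{t,x}[Y]=\sup_{\PP\in\cP(t,x)}\Ex{\PP}{Y}=\sup_{\PP\in\cP(t,x)}\Ex{(\zeta(t,x))_\ast\PP}{Y\circ\theta(t,x)}=\sup_{\PP_0\in\cP_0}\Ex{\PP_0}{Y\circ\theta(t,x)}=\cE^0[Y\circ\theta(t,x)]. \]
Note that only surjectivity of $(\zeta(t,x))_\ast$ is strictly needed for the equality of suprema, and this is exactly the inclusion $(\theta(t,x))_\ast\cP_0\subseteq\cP(t,x)$ combined with $\zeta(t,x)\circ\theta(t,x)=\mathrm{id}$; the full bijection simply organizes the argument cleanly.

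Finally, for the ``in particular'' statement I would specialize $Y=\phi(X_{s_1},\dots,X_{s_n})$ with $t\le s_1\le\dots\le s_n\le T$ and $\phi\in C_b(\R^n)$. Since $\theta(t,x)(\omega)$ equals $x+\omega(s_i-t)=x+X^0_{s_i-t}(\omega)$ at each $s_i\ge t$, we obtain $Y\circ\theta(t,x)=\phi(x+X^0_{s_1-t},\dots,x+X^0_{s_n-t})$, and substituting this into the general identity \eqref{eq:linkToNN} yields the claimed formula.
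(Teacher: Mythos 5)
Your proposal is correct and is essentially the paper's own argument: the paper's one-line proof establishes the duality precisely via the two push-forwards $\theta(t,x)\sharp\,\PP^0\in\cP(t,x)$ for $\PP^0\in\cP_0$ and $\zeta(t,x)\sharp\,\PP\in\cP_0$ for $\PP\in\cP(t,x)$, which is exactly your argument with the details (the frozen-set identity $\theta(t,x)\circ\zeta(t,x)=\mathrm{id}$ on $\{X_s=x,\ s\leq t\}$, and the invariance of the characteristics constraint under spatial translation and time shift) filled in. The only imprecision is your closing parenthetical: the inclusion $\zeta(t,x)\sharp\,\cP(t,x)\subseteq\cP_0$ is genuinely needed for the inequality $\cE_{t,x}[Y]\leq\cE^0[Y\circ\theta(t,x)]$, so both inclusions matter, not just surjectivity --- but since your main argument establishes both, nothing is lost.
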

\begin{proof}
  The duality
  \[ \sup_{\PP \in \cP(t,x)} \EE^\PP[Y] = \sup_{\PP \in \cP_0}
    \EE^\PP[Y\circ \theta(t,x)]\] is established by the push-forward
  measures $\theta(t,x)\sharp \PP^0$ for $\PP^0\in \cP_0$ and
  $\zeta(t)\sharp \PP$ for $\PP\in \cP(t,x)$.
\end{proof}

This characterization allows us to apply the results from
\cite{neufeld2017nonlinear}.

\begin{proof}[Proof of Lemma~\ref{lem:txLevy}]
  By \cite[Theorem 2.1]{neufeld2017nonlinear}, $(X_t^0)_{t\geq 0}$ is
  a non-linear Levy process under $\cE^0$. Using this and
  Lemma~\ref{lem:linkToNN} we get for all $n\in \N$,
  $\phi \in \cL^\infty(\R^{n+1})$ and all $s\geq t$ and
  $t\leq s_1 \leq \ldots \leq s_n\leq s$:
  \begin{align*}
    \vspace*{2em}&\vspace*{-2em}
                   \cE_{t,x}[\phi(X_{s+r} - X_s, X_{s_1},\ldots, X_{s_n})]\\
                 &= \cE^0[\phi(X^0_{s+r-t} -
                   X_{s-t}^0, x+ X^0_{s_1-t},\ldots,x+ X^0_{s_1-t})] \\
                 &= \cE^0[ \cE^0[\phi(X^0_{s+r-t} -
                   X_{s-t}^0, y_1,\ldots, y_n)]\vert_{y_1= x+X_{s_1-t}^0,\ldots,y_n= x+X_{s_n-t}^0}] \\
                 &=\cE^0[\cE_{t,x}[\phi(X_{s+r} - X_s,y_1,\ldots,y_n)]\vert_{y_1=
                   x+X_{s_1-t}^0,\ldots,y_n= x+X_{s_n-t}^0}]\\
                 &= \cE_{t,x}[\cE_{t,x}[\phi(X_{s+r} -
                   X_s,y_1,\ldots,y_n)]\vert_{y_1=X_{s_1},\ldots,y_n = X_{s_n}}].
  \end{align*}
  This yields the independence of increments. Similarly, for $\phi \in \cL^\infty(\R)$ and all $s\geq t$ and
  $r\geq 0$,
  \begin{multline}
    \cE_{t,x}[ \phi(X_{s+r} - X_s)] = \cE^0[\phi(x+X_{s+r-t}^0 -
    X_{s-t}^0)] = \cE^0[\phi(x+X_{r}^0)]\\
    = \cE^0[\phi(x+X_{r}^0)]= \cE^0_{t,x}[\phi(X_{t+r})]
  \end{multline}
  Hence, $(X_t)$ is a non-linear Levy process in the sense of
  Lemma~\ref{lem:txLevy}.
\end{proof}

For $\tau\in \cT_{0,T}$ and $\omega$, $\tilde \omega \in \Omega_0$,
define
\begin{equation}
  \label{eq:10_tilde}
  (\omega \otimes_\tau \tilde\omega)(t) := \omega(t) \1_{\tau(\omega)<t} + (\tilde \omega(t-\tau(\omega)) + \omega(\tau(\omega))) \1_{\tau(\omega)\geq t}.
\end{equation}

In particular, we have that
\[ \1_{[t,\infty)} \otimes \tilde \omega = \theta(t,0)(\tilde
  \omega)\] for all $\tilde \omega\in \Omega^0$.

The dynamic programming principle will be derived from the following
result on Levy processes on $(\Omega^0, \F^0,\mathcal{\cE^0})$.
\begin{lemma}[Theorem 2.1.(ii) in \cite{neufeld2017nonlinear}]
  \label{lem:dppnonlinlevy}
  Let $Y\in \cL^\infty(\Omega^0)$ and $\sigma$, $\tau\in \cT_{0,T}$ be
  such that $\sigma\leq \tau$. Then, for all $\omega\in \Omega^0$:
  \[ \cE^0[Y(\omega \otimes_\sigma .)] = \cE^0 [ \cE^0[Y(\tilde \omega
    \otimes_\tau .)]\vert_{\tilde \omega = (\omega\otimes_\sigma
      .)}.\]
\end{lemma}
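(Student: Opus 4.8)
The plan is to obtain this as a direct translation of Theorem~2.1(ii) of \cite{neufeld2017nonlinear} into the present notation. That reference establishes a dynamic programming principle for the sublinear expectation attached to a set of semimartingale laws whose differential characteristics are constrained, $\PP\otimes\d t$-almost everywhere, to lie in a fixed set $\Theta$. Our $\cP_0$ is exactly such a set: it consists of the laws in $\fP$ started at $0$ whose triplets $(b^\PP,c^\PP,F^\PP)$ take values in $\Theta$, so the two frameworks coincide on $(\Omega^0,\F^0)$. The first step is therefore to check that the standing hypotheses of \cite{neufeld2017nonlinear} are met by $\Theta$; these are precisely the uniform moment bound~\eqref{eq:5} and the uniform small-jump integrability~\eqref{eq:4_sup}, which are imposed here by construction. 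Together with Borel measurability and non-emptiness of $\Theta$, they guarantee that $\cP_0$ is non-empty and enjoys the regularity needed below, so that the cited theorem applies verbatim once the identification of frameworks has been made via Lemma~\ref{lem:linkToNN}.

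The mathematical content behind the cited theorem, and the route I would take were I to prove it from scratch, rests on two stability properties of $\cP_0$ that yield the two inequalities of the tower property. First, \emph{stability under conditioning}: for $\PP\in\cP_0$ and the stopping time $\tau$, a regular conditional distribution of $\PP$, reindexed through the concatenation map $\cdot\otimes_\tau\cdot$, again lies in $\cP_0$ for $\PP$-almost every realisation; since the constraint ``characteristics in $\Theta$ for a.e.\ time'' is local in time, it is inherited by the conditional laws, and this delivers the inequality $\cE^0[Y(\omega\otimes_\sigma\cdot)]\le\cE^0[\cE^0[Y(\tilde\omega\otimes_\tau\cdot)]|_{\tilde\omega=\omega\otimes_\sigma\cdot}]$. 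Second, \emph{stability under pasting}: given $\PP\in\cP_0$ and a measurable family $\tilde\omega\mapsto Q_{\tilde\omega}\in\cP_0$, the law obtained by running $\PP$ up to $\tau$ and inserting $Q$ through $\otimes_\tau$ is again in $\cP_0$, once more because the characteristic constraint is preserved by this operation. Combined with a measurable selection of $\varepsilon$-optimal $Q_{\tilde\omega}$, this yields the reverse inequality, and the two together give the asserted equality.

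The main obstacle, and the reason the argument is delegated to \cite{neufeld2017nonlinear} rather than carried out in full here, is the measurable-selection and conditioning machinery needed to make the pasting construction rigorous: one must produce the family $\tilde\omega\mapsto Q_{\tilde\omega}$ in a jointly measurable way, verify that the pasted measure retains characteristics in $\Theta$ up to a $\PP\otimes\d t$-null set, and ensure that the supremum over $\cP_0$ interchanges with the conditioning step. These points depend on the analytic-set structure of $\cP_0$ and on the tightness furnished by~\eqref{eq:5} and~\eqref{eq:4_sup}. Since all of this is established in the stated generality in the cited work, the proof reduces to invoking that result after the translation supplied by Lemma~\ref{lem:linkToNN}.
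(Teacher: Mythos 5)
Your proposal matches the paper's treatment: the paper offers no proof of this lemma at all, importing it verbatim as Theorem~2.1(ii) of \cite{neufeld2017nonlinear}, which is exactly what you do after checking that $\Theta$ satisfies the standing hypotheses \eqref{eq:5} and \eqref{eq:4_sup}; your sketch of the conditioning/pasting mechanism behind that theorem is accurate but supplementary. One small point: the lemma is already posed on the canonical space $(\Omega_0, \F^0, \cE^0)$ where the cited result lives, so no translation via Lemma~\ref{lem:linkToNN} is needed for the lemma itself --- that identification only enters later, when the lemma is applied to prove Proposition~\ref{prop:nL_mb_dpp}.
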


We are now able to show the dynamic programming principle in the
formulation of~\ref{i:dpp}.

\begin{proof}[Proof of Proposition~\ref{prop:nL_mb_dpp}]
  From Lemma~\ref{lem:linkToNN} we get for $g\colon E\to \R$ bounded
  and Lipschitz continuous, that
  \begin{equation}
    \label{eq:2_bs}
    \cE_{t,x}[ g(X_T)] = \cE^0[g(X_T\circ \theta(t,x))] = \cE^0[g(x + X_{T-t}^0)].
  \end{equation}  
  By \cite[Theorem 2.5]{neufeld2017nonlinear}, this is continuous as a
  function of $(t,x)$ and thus measurable. Let us fix
  $\tau\in \cT_{t,T}$. Note that for $s\geq t$,
  \begin{equation*}
    \{\omega\in \Omega_0 \,\colon\, \tau(\theta(t,x)(\omega)) \leq s\} = (\theta(t,x)^{-1}\circ \tau^{-1})([0,s])
  \end{equation*}
  Since $\tau$ is an $(\F_t)_{t\in[0,T]}$ stopping time, it holds that
  $\tau^{-1}([0,s])\in \F_s$. Now, $\theta(t,x)$ is
  $\F_{(s-t)\vee 0}^0\slash \F_s$ measurable, and hence
  \[\tau_{t,x}:= \tau\circ \theta(t,x)-t\]
  is an $(\F_t^0)_{t\in [0,T]}$ stopping time with values in
  $[0, T-t]$.
  Now, define for $\omega\in \Omega_0$:
  \[ Y(\omega) := B_{T}^{-1}g(X_T(\theta(t,x)(\omega))) = B^{-1}_{T}
    g(x+\omega(T-t)).\] Since $\tau_{t,x}$ takes values in $[0,T-t]$,
  \begin{align*}
    Y(\omega\otimes_{\tau_{t,x}} \tilde \omega)
    &= B^{-1}_{T} g(x+ (\omega\otimes_{\tau_{t,x}}\tilde \omega)(T-t))\\
    &=  B^{-1}_{T}g(x+\omega(\tau_{t,x}(\omega)) +
      \tilde\omega(T-(\tau_{t,x}(\omega)+t)))\\
    &=  B^{-1}_{T}g(x+X^0_{\tau_{t,x}(\omega)}(\omega) +
      X^0_{T-(\tau_{t,x}(\omega)+t)}(\tilde \omega)).
  \end{align*}
  Finally, we use that $\tau_{t,x}$ is a stopping time as well as
  Lemma~\ref{lem:linkToNN} and~\ref{lem:dppnonlinlevy} to get that
  \begin{align*}
    \cE_{t,x}[B_{T}^{-1}g(X_T)]
    &= \cE^0[B_{T}^{-1}g(x+X_{T-t}^0)] \\
    &= \cE^0\left[\cE^0[
      B^{-1}_{T} g(y+X^0_{T-s})]\vert_{s=T-(\tau_{t,x}+t), y =
      x+X_{\tau_{t,x}}^0}\right]\\
    &= \cE^0\left[ B_{\tau_{t,x}}^{-1}B_{t+\tau_{t,x}}\cE^0[B_T^{-1}
      g(y+X_{T-s})]\vert_{s=T-(\tau_{t,x}+t), y =
      x+X_{\tau_{t,x}}^0}\right] \\
    &= \cE^0[ B^{-1}_{\tau_{t,x}}v(\tau_{t,x}+t,
      x+X_{\tau_{t,x}}^0)] = \cE_{t,x}[B_{\tau}^{-1} v(\tau, X_\tau)].  \qedhere
  \end{align*}
\end{proof}

\begin{proof}[Proof of Proposition~\ref{prop:pdeLevy}]
  In the case $r=0$, \cite[Theorem 2.5]{neufeld2017nonlinear} states
  that $v$ is the unique (continuous) viscosity solution
  of~\eqref{eq:pdeLevy}. The extension to $r\in \R$ is straight
  forward. This yields Proposition~\ref{prop:pdeLevy}.
\end{proof}

\begin{proof}[Proof of Lemma~\ref{lem:wconststopping}]
  We show the statement by contradiction. Assume there exists a
  $t\in [s,T]$ such that
  \[ w(t,x) < h(x).\] Let $\tau \in \cT_{t,T}$ be such that
  \[ B_t \cE_{t,x}[B_\tau^{-1} h(X_\tau)] < h(x).\] Define for
  $\omega\in \Omega$
  \[ \varsigma(\omega) := \tau(\omega((.-(t-s))\vee 0)) + t-s\] Then,
  $\varsigma$ is a stopping time with values in $\cT_{s,T}$, and
  \[ (\varsigma \circ \theta_{s,x})(\omega) = \tau\circ(t,x) + t-s\]
  Using this, the positive homogeneity of $\cE^0$ and that
  $B_t = e^{rt}$, we get
  \begin{align*}
    w(s,x)&\leq B_s\cE_{s,x}[B_{\varsigma}^{-1}h(X_\tau)]\\
          &= B_s
            \cE^0[B_{\varsigma\circ \theta(s,x)}^{-1} h(x+X_{\varsigma\circ
            \theta(s,x) + s})]\\
          &= B_s \cE^0[B_{\tau\circ \theta(t,x)+t-s}^{-1} h(x+X_{\tau\circ
            \theta(t,x) + t})]\\
          &= B_s B_{t-s}^{-1}\cE^0[B_{\tau\circ \theta(t,x)}^{-1} h(x+X_{\tau\circ
            \theta(t,x) + t})]\\
          &= B_t\cE_{t,x}[B_{\tau}^{-1} h(x+X_{\tau})]\\
          &<h(x).
  \end{align*}
  This contradicts the assumptions on $(s,x)$. Hence, such a
  $t\in [s,T]$ cannot exist.
\end{proof}

\section{Finite State Space}
\label{sec:finite}
We consider now the case where $E$ is finite, more precisely
$d:= \abs{E} \in \N $. Without loss of generality we assume that
$E = \{1,...,d\}$.  Let us fix $T\in (0,\infty)$ and let
$\Omega := D([0,T];E)$ equipped with the Borel $\sigma$-algebra
$\F$. $X$ is the coordinate process generating the raw filtration
$(\F_t)_{t\in [0,T]}$. We denote by $\cT_{s,t}$ the set of
$[s,t]$-valued stopping times which take values in $[s,t]$.

We fix a compact set $\cS$ of stochastic matrices in $\R^{d\times
  d}$. For a $Q\in \cS$ fixed let
$(\PP_{t,x})_{(t,x)\in [0,T]\times E}$ be the strong Markov family
which satisfies that $X_s = x$ $\PP_{t,x}$-almost surely for all
$s\in [0,t]$, and that
\[ \left(g(X(s)) - g(x) - \int_t^s Q g(X(r)) \d r\right)_{s\in
    [t,T]}\] is a $\PP_{t,x}$-martingale for every $g\in C(E)$. We
define for fixed $g\in C(X)$ and $c \in C([0,T], E)$
\[v(t,x):= v(t)_x := \EE^{\PP_{t,x}}[g(X_T) + \int_t^T c(s,X(s)) \d
  s].\] The value function $v$ is the unique solution of the linear
ODE
\[ \ddt v(t) + Qv(t) - c(t,x) = 0,\quad t\in [0,T],\qquad v(T) = g.\]
In particular, $v$ is continuous and Proposition~\ref{prop:ubounds},
Corollary~\ref{cor:contreg} and Theorem~\ref{thm:wvu} are
applicable.

\begin{rmk}
  In this framework, it is easily shown in that if $d=2$, then
  the map from $g$ to $f$ is surjective, also the case $d=3$ can be partially treated. For arbitrary $d$
  surjectivity was shown up to time $T< \frac1{\norm{Q}{}}$. More precisely, for each $Q\in \cS$,
  $T\in (0, 1/\norm{Q}{})$ and each $f\in \R^d$, there exists a
  $g\in \R^d$ such that
  \[ \inf_{0\leq t\leq T} e^{tQ}g = f, \] where the infimum is taken
  element-wise. The same holds true when replacing $\inf$ by $\sup$. These results will be proved elsewhere.
\end{rmk}

Next, we want to have a look into a non-linear framework for
continuous-time Markov chains, following~\cite{nendel2018markov}.

Denoting by $\Pi_{s,t}$ the set of all partitions of $[s,t]$, we
define for $(t,x)\in [0,T]\times E$, $\fP(t,x)$ to be the set of all
$\PP\in \cP(\Omega)$ such that
\begin{enumerate}[nolistsep]
\item it holds that $\PP[X_s = x] = 1$, for all $s\in [0,t]$,
\item there exist $\pi \in \Pi_{t,T}$ with
  $\pi =: \{t=t_0,t_1,...,t_n=T\}$ for some $n\in \N$,
  $\mathscr{Q}_i \colon E \to \cS$, $i=1,...,n$,
  \[ \left(g(X(s)) - g(x) - \int_t^{s} Q^\PP(r)g(X(r)) \d
      r\right)_{s\in [t,T]}\] is a $\PP$-martingale for each
  $g\in C(E)$, where for $r\in [t,T]$,
  \[ Q^\PP (r) := \sum_{i=1}^n\1_{(t_{i-1}, t_i]}(r)
    \mathscr{Q}_i(X(t_{i-1})). \]
\end{enumerate}

As space of integrable variables we take $\cH:=
\cL^\infty(\Omega)$. Now, we define the non-linear expectations for
$Y\in \cH$ as
\begin{equation}
  \label{eq:3_expectation}
  \cE_{t,x}[Y] := \sup_{\PP \in \fP(t,x)} \EE^{\PP}[Y],\qquad (t,x)\in [0,T]\times E.
\end{equation}

We fix a terminal payoff $g \in C(E)$, set the running costs
$\varrho \equiv 0$, keep $B\equiv 1$ and define the value function
\begin{equation}
  \label{eq:4_expectation}
  v(t,x) := v(t)_x := \cE_{t,x}[g(X_T)],\qquad (t,x)\in [0,T]\times E.
\end{equation}

Note that, by construction, $v$ can be expressed in terms of the Nisio
semigroups considered in~\cite[Definition 3.2]{nendel2018markov}. More
precisely,
\begin{equation}
  \label{eq:10_sup}
  \begin{aligned}
    v(t) &= \sup_{\{t_0,t_1,...,t_n\}:=\pi\in \Pi_{t,T}} \sup_{Q^1\in \cS} e^{(t_1-t_0)Q^1}\left(...\left( \sup_{Q^n\in \cS} e^{(t_n-t_{n-1})Q^{n}}g\right)\right) \\
    &= \lim_{n\to\infty} \sup_{Q^{1}\in \cS}
    e^{(T-t)2^{-n}Q^1}\left(...\left( \sup_{Q^{2^n}\in \cS}
        e^{(T-t)2^{-n}Q^{2^n}}g\right)\right),
  \end{aligned}
\end{equation}
where the $\sup$ are taken point-wise and the second equality follows
from \cite[Corollary 3.8]{nendel2018markov}.

\begin{proposition}
  The family $(\cE_{t,x})_{(t,x)\in [0,T]\times E}$ fulfills
  \ref{i:const}, \ref{i:monoton}, \ref{i:monoton_strict},
  \ref{i:txeval}, and \ref{i:meas}.
\end{proposition}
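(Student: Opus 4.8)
The plan is to verify each of the five axioms \ref{i:const}, \ref{i:monoton}, \ref{i:monoton_strict}, \ref{i:txeval}, \ref{i:meas} directly from the definition \eqref{eq:3_expectation} of $\cE_{t,x}$ as a supremum of linear expectations $\EE^\PP$ over the set $\fP(t,x)$. The first two axioms are immediate: for a constant $c$ each $\EE^\PP[c]=c$, so the supremum is $c$, giving \ref{i:const}; and if $Y\le Y'$ then $\EE^\PP[Y]\le \EE^\PP[Y']$ for every $\PP$, so passing to the supremum yields \ref{i:monoton}. For \ref{i:txeval} I would observe that by the first defining property of $\fP(t,x)$ we have $\PP[X_t=x]=1$ for every admissible $\PP$, so $\EE^\PP[Y\1_{X_t=x}]=\EE^\PP[Y]$ for all $\PP$; taking the supremum shows $\{X_t=x\}$ is a full $(t,x)$-scenario.

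For the strict monotonicity \ref{i:monoton_strict} I would argue as follows. Suppose $Y\le Y'$ with $\cE_{t,x}[Y]=\cE_{t,x}[Y']$. I want to show $\{Y=Y'\}$ is a full $(t,x)$-scenario, i.e. $\cE_{t,x}[Z\1_{\{Y=Y'\}}]=\cE_{t,x}[Z]$ for all $Z\in\cH$. The key is to produce, for each admissible $\PP$, a \emph{dominating} measure under which the two expectations already coincide and under which $\{Y<Y'\}$ is null. The natural candidate is to use the continuity of the underlying Markov family: since each $\PP\in\fP(t,x)$ is built from strong Markov families indexed by $Q\in\cS$ started at $(t,x)$, and $X$ has full support of jump directions under some $Q$, any open set reachable from $x$ carries positive mass under at least one admissible $\PP$. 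Concretely, I expect that $\{Y<Y'\}$ must be $\PP$-null for \emph{every} $\PP\in\fP(t,x)$, for otherwise one could find $\PP^*$ with $\EE^{\PP^*}[Y]<\EE^{\PP^*}[Y']\le\cE_{t,x}[Y']=\cE_{t,x}[Y]$, and by a perturbation/mixing argument within $\fP(t,x)$ boost $\EE[Y']$ strictly above $\cE_{t,x}[Y]$, contradicting optimality. Once $\{Y<Y'\}$ is null under every $\PP$, multiplying any $Z$ by $\1_{\{Y=Y'\}}$ changes no $\EE^\PP[Z]$, and the supremum is unchanged.

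For \ref{i:meas} I would invoke the representation \eqref{eq:10_sup}: the value function $v(t)=\cE_{t,x}[g(X_T)]$ is given by a composition of the Nisio semigroup operators from \cite{nendel2018markov}, and $v$ is continuous in $(t,x)$ by the limit formula in \eqref{eq:10_sup} together with the continuity results in \cite[Corollary 3.8]{nendel2018markov}. Continuity of $v$ on the compact set $[0,T]$ in the time variable gives that $\inf_{0\le t\le T}v(t,X_\tau)$ and $\sup_{0\le t\le T}v(t,X_\tau)$ are attained and are continuous functions of $X_\tau$; composing with the measurable map $\omega\mapsto(\tau(\omega),X_{\tau(\omega)}(\omega))$ shows the three required random variables lie in $\cH=\cL^\infty(\Omega)$, since $v$ is bounded (as $g$ is continuous on the finite set $E$ hence bounded). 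The main obstacle is \ref{i:monoton_strict}: establishing that coincidence of the two suprema forces $\{Y<Y'\}$ to be null under \emph{all} admissible measures requires a genuine support/irreducibility argument for the family $\fP(t,x)$, rather than a formal manipulation, and care is needed because the supremum defining $\cE_{t,x}$ need not be attained by a single $\PP$.
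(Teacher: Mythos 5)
Your verifications of \ref{i:const}, \ref{i:monoton} and \ref{i:txeval} are correct, and your route to \ref{i:meas} (continuity of $(t,x)\mapsto v(t,x)$ via the Nisio-semigroup representation \eqref{eq:10_sup}, finiteness of $E$, boundedness of $g$) is essentially the paper's own argument, which deduces continuity of $v$ from finiteness of $E$, continuity of $(t,Q)\mapsto (e^{(T-t)Q}g)_x$ and separability of $\cS$. Up to that point the two proofs agree (the paper simply declares the first four properties ``clear'').

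The genuine gap is \ref{i:monoton_strict}, exactly where you located it, and the argument you sketch cannot be completed. First, the intended contradiction is structurally unreachable: any measure obtained by perturbation or mixing, as long as it remains in $\fP(t,x)$, satisfies $\EE^{\PP}[Y']\leq \cE_{t,x}[Y']$ by the very definition of the supremum, so ``boosting $\EE[Y']$ strictly above $\cE_{t,x}[Y]=\cE_{t,x}[Y']$'' is impossible no matter what the assumption $\PP^*[Y<Y']>0$ provides (and $\fP(t,x)$ is not even convex, so mixing is unavailable). Second, and decisively, the intermediate claim you aim for --- that $\{Y<Y'\}$ is $\PP$-null for \emph{every} $\PP\in\fP(t,x)$ --- is false here, because the full-support/irreducibility property you invoke is not part of the hypotheses: indeed \ref{i:monoton_strict} itself fails for general compact $\cS$. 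Take $d=2$, $\cS=\{0,Q\}$ with $Q=\left(\begin{smallmatrix}-1&1\\1&-1\end{smallmatrix}\right)$ (both admissible rate matrices for the martingale problem defining $\fP(t,x)$), $(t,x)=(0,1)$, $Y=\1_{\{X_T=1\}}$, $Y'=\1_\Omega$. The zero generator yields $\PP_0\in\fP(0,1)$ under which $X\equiv 1$, so $\cE_{0,1}[Y]=1=\cE_{0,1}[Y']$; yet for $Z=\1_{\{X_T=2\}}$ one has $\cE_{0,1}[Z\1_{\{Y=Y'\}}]=\cE_{0,1}[0]=0$ while $\cE_{0,1}[Z]\geq \PP_Q[X_T=2]>0$, where $\PP_Q$ is the chain with generator $Q$. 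So $\{Y=Y'\}$ is not a full $(0,1)$-scenario, and no argument can close your gap without extra hypotheses on $\cS$. What does suffice: if all matrices in $\cS$ have the same off-diagonal support (so that all laws in $\fP(t,x)$ are mutually absolutely continuous on $\F_T$) and the supremum defining $\cE_{t,x}[Y]$ is attained by some $\PP^*$, then $\EE^{\PP^*}[Y'-Y]=0$ forces $\PP^*[Y=Y']=1$, equivalence propagates this to every $\PP\in\fP(t,x)$, and fullness of $\{Y=Y'\}$ follows. For comparison, the paper gives no argument at all for \ref{i:monoton_strict} (it is subsumed under ``clear by definition and properties of $\sup$ and (linear) expectations''), so your suspicion that a genuine support property of $\fP(t,x)$ is needed is well founded --- the example shows it is needed not merely for the proof but for the truth of the statement.
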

\begin{proof}
  \ref{i:const}, \ref{i:monoton}, \ref{i:monoton_strict},
  \ref{i:txeval} are clear by definition and properties of $\sup$ and
  (linear) expectations. Also, $(t,x)\mapsto v(t,x)$ is continuous
  since $E$ is finite,
  \[(t,Q)\mapsto (e^{(T-t)Q}g)_x\] is continuous for each $x\in E$ and
  $\cS$ is separable. This yields
  \ref{i:meas}.
  \end{proof}

By \cite[Theorem 1.3]{nendel2018markov}, the HJB equation for $v$
becomes the backward ODE
\begin{equation}
  \label{eq:HJBODE}
  \ddt v(t) + \sup_{Q\in \cS} Qv(t) = 0,\qquad v(T) = g,
\end{equation}
for $t\in [0,T]$. However, $Q$ is in general not local and
Section~\ref{sec:PDE} are not applicable here, even not for the linear
case where $\cS$ is a singleton.

Since $\cS$ is bounded, Picard-Lindel\"of theorem yields that
\eqref{eq:HJBODE} has a unique solution which is
$C^1$. In~\cite{nendel2018markov} also the case with $\varrho \leq 0$ is covered.

We define the function $h$ by
\[ h(x):= \sup_{0\leq t\leq T} v(t,x),\qquad \text{for }x\in E\] and
focus now on the value function $w$ of the minimization problem
\begin{equation}
  \label{eq:11}
  w(t,x) := \inf_{\tau\in \cT_{t,x}} \cE[h(X_\tau)],\qquad (t,x)\in
  [0,T]\times E.
\end{equation}
Note that $h$ is continuous and thus measurable, in
particular~\ref{i:meas} is satisfied.

Recall that the stopping region is defined $D$ as
\[D:= \{ (t,x) \in [0,T]\times E\, \colon w(t,x) = h(x)\}.\] We
trivially get a one-sided version of~\ref{i:dpp}: for each
$\tau \in \cT_{t,T}$:
\begin{multline}
  \cE_{t,x}[v(\tau, X_\tau)] = \sup_{\PP\in \fP(t,x)} \EE^\PP[
  \sup_{\QQ \in \fP(\tau,X_\tau)}\EE^\QQ[g(X_T)]]\\ \geq \sup_{\PP\in
    \fP(t,x)} \EE^\PP[ \EE^\PP[g(X_T)]] = \sup_{\PP\in \fP(t,x)}
  \EE^\PP[g(X_T)]= v(t,x).
\end{multline}

Thus, Proposition~\ref{prop:ubounds} and Corollary~\ref{cor:contreg},
see also Remark~\ref{rmk:onesideddpp}, yield that $v\leq w$ and
$\graph(t^*) \subseteq D$, where
\[t^*(x) := \argmax\{v(t,x)\,\colon t\in [0,T]\}.\]

If we additionally assume that the other part of the dynamic programming principle
holds true, namely that for each $\tau \in \cT_{t,T}$:
\[ v(t,x) =\cE_{t,x}[v(\tau, X_\tau)],\] then also
Theorem~\ref{thm:wvu} is applicable.

\begin{appendix}
  \section{Notation}
  \label{sec:notation}
  For $t\leq T$ denote by $\cT_{t,T}$ the set of all stopping times
  $\tau$ such that almost surely $t\leq \tau \leq T$. For a measurable
  space $(\Omega,\F)$ we denote by $\cL^0(\Omega)$ the vector space of
  all measurable functions, and by $\cL^\infty(\Omega)$ the space of
  all real valued bounded and measurable functions on $\Omega$. For
  $N\in \N$ let $S_N$ be the space of symmetric $N\times N$ matrices.

  For a metric space $(E,d)$ and $\epsilon \in (0,\infty)$, $x\in E$
  we denote the open ball of radius $\epsilon$ around $x$ by
  $B_E(x,\epsilon)$.

  For sets $A$ and $B$ function $F\colon A\to B$ we define the graph
  of $F$ as
  \[ \graph(F) := \{(a,b) \in A\times B \colon F(a) = b\}. \]

  For a measurable space $(O,\F)$ and a metric space $E$ we denote by
  $L^p(O; E)$, $p\in [0,\infty]$ the Lebesgue spaces.

  For Banach spaces $E$, $F$, $G$ we denote by $L(E;G)$ the space of
  linear continuous maps from $E$ to $G$ and by $L_{(2)}(E,F;G)$ the
  space of bilinear continuous maps from $E\times F$ into $G$.

  \section{Viscosity solutions of PDEs}
  \label{sec:visco}
  In this section we briefly recall the definition we fix $d\in \N$
  and $T>0$. We consider a \emph{proper} function
  $F\colon \times \R^d\times \R \times \R^d \times S_{d} \to \R$,
  which is means that for all $r, s\in \R$, $x,p\in \R^d$,
  $X, Y\in S_d$ with $r\leq s$ and $Y\leq X$ it holds that
  \begin{equation}
    \label{eq:proper}
    F(x,r,p,X) \leq  F(x,s,p,Y).
  \end{equation}

  For such a function, we consider the non-linear PDE
  \begin{equation}
    \label{eq:pdeproper}
    -\ddt u(t,x) + F(x, u(t,x), D u(t,x), D^2u(t,x)) = 0.
  \end{equation}

\begin{definition}\label{def:visc}
  Let $F$ be a proper function, $\cO\subseteq [0,T]\times \R^d$ and
  $T\in (0,\infty)$.
  \begin{enumerate}
  \item A \emph{viscosity subsolution} of~\eqref{eq:pdeproper} on
    $\cO$ is a an upper semicontinuous function $u\colon \cO \to \R$
    such that for all $(t,x)\in \cO$, $\phi \in C^{1,2}(\cO;\R)$ such
    that $u(t,x) = \phi(t,x)$ and $u\geq \phi$ in an
    $\cO$-neighborhood of $(t,x)$, it holds that
    \begin{equation}
      \label{eq:subsol}
      -\ddt \phi(t,x) + F(t, x, \phi(t,x), D\phi(t,x), D^2\phi(t,x)) \leq 0.
    \end{equation}
  \item A \emph{viscosity supersolution}~\eqref{eq:pdeproper} on $\cO$
    is a an upper semicontinuous function $u\colon \cO \to \R$ such
    that for all $(t,x)\in \cO$, $\phi \in C^{1,2}(\cO;\R)$ with
    $u(t,x)= \phi(t,x)$ and $u\leq \phi$ in an $\cO$-neighborhood of
    $(t,x)$, it holds that
    \begin{equation}
      \label{eq:subsol_equation}
      -\ddt \phi(t,x) + F(t, x, \phi(t,x), D\phi(t,x), D^2\phi(t,x)) \geq 0.
    \end{equation}
  \item A \emph{viscosity solution} of~\eqref{eq:pdeproper} on $\cO$
    is a continuous function $u\colon \cO\to\R$ which is a sub- and a
    supersolution in the sense of this definition.
  \end{enumerate}
\end{definition}

\begin{rmk}
  The term $-\ddt u$ comes from the fact that we are mainly interested
  in terminal value problems here. We stress the equivalence to of the
  solution concepts when switching to the initial value problem by
  time change: if $u$ is a sub-/supersolution in the sense above, then
  \[ \tilde u(t,x) := u(T-t,x)\] is a sub-/supersolution if $-\ddt u$
  is replaced by $\ddt \tilde u$.
\end{rmk}

\end{appendix}

\end{document}